\numberwithin{equation}{section}
\newtheorem{thm}{Theorem}[section]
\newtheorem{cor}[thm]{Corollary}
\newtheorem{lem}[thm]{Lemma}
\newtheorem{prop}[thm]{Proposition}
\newtheorem{thmx}{Theorem}
\theoremstyle{definition}
\newtheorem{defi}[thm]{Definition}
\newtheorem{remark}[thm]{Remark}
\newtheorem{example}[thm]{Example}
\newcommand{\Z}{\mathbb{Z}}
\DeclareMathOperator{\rank}{rank}
\DeclareMathOperator{\cok}{cok}
\renewcommand{\phi}{\varphi}
\DeclareMathOperator{\Ima}{Im}
\DeclareMathOperator{\End}{End}
\DeclareMathOperator{\Hom}{Hom}
\DeclareMathOperator{\dep}{depth}
\DeclareMathOperator{\syz}{syz}
\DeclareMathOperator{\rad}{rad}
\DeclareMathOperator{\charac}{char}
\DeclareMathOperator{\ord}{ord}
\newcommand{\MatFac}[3]{\textup{MF}_{#1}^{#2}(#3)}
\newcommand{\MCM}{\textup{MCM}}
\newcommand{\mc}{\mathcal}
\newcommand{\mf}[1]{\mathfrak{#1}}
\title{Branched covers and matrix factorizations}
\author{Graham J. Leuschke and Tim Tribone}
\begin{document}
\subjclass{13H10, 13C14}
\keywords{maximal Cohen-Macaulay modules, branched covers, matrix factorizations, hypersurface rings}
\maketitle
\begin{abstract}
    Let $(S,\mf n)$ be a regular local ring and $f$ a non-zero element of $\mf n^2$. A theorem due to Kn\"orrer states that there are finitely many isomorphism classes of maximal Cohen-Macaulay $R=S/(f)$-modules if and only if the same is true for the double branched cover of $R$, that is, the hypersurface ring defined by $f+z^2$ in $S\llbracket z \rrbracket$. We consider an analogue of this statement in the case of the hypersurface ring defined instead by $f+z^d$ for $d\ge 2$. In particular, we show that this hypersurface, which we refer to as the $d$-fold branched cover of $R$, has finite Cohen-Macaulay representation type if and only if, up to isomorphism, there are only finitely many indecomposable matrix factorizations of $f$ with $d$ factors. As a result, we give a complete list of polynomials $f$ with this property in characteristic zero. Furthermore, we show that reduced $d$-fold matrix factorizations of $f$ correspond to Ulrich modules over the $d$-fold branched cover of $R$.
\end{abstract}

\thispagestyle{empty} 

\section{Introduction}

Let $(S,\mf n, \mathbf{k})$ be a complete regular local ring and let $d \ge 2$ be an integer. Assume that $\mathbf{k}$ is algebraically closed and that the characteristic of $\mathbf{k}$ does not divide $d$. Fix a non-zero element $f \in \mf n^2$ and let $R=S/(f)$ be the hypersurface ring defined by $f$.

A finitely generated module $M$ over a local ring $A$ is called \textit{maximal Cohen-Macaulay} (MCM) if $\dep_A(M)$ $= \dim(A)$, the Krull dimension of $A$. We will consider MCM modules over $R$ and over the \textit{$d$-fold branched cover} of $R$, that is, the hypersurface ring $R^\sharp = S\llbracket z\rrbracket/(f+z^d)$. The main objective of this paper is to further understand the connection, established for $d=2$ by Kn\"orrer \cite{knorrer_cohen-macaulay_1987} and extended for $d> 2$ in \cite{tribone_matrix_2021}, between MCM $R^\sharp$-modules and matrix factorizations of $f$ with $d$ factors which we define below.
\begin{defi}
A \textit{matrix factorization of $f$ with $d$ factors} is a $d$-tuple of homomorphisms \[X=(\phi_1:F_2\to F_1,\phi_2: F_3\to F_2,\dots,\phi_d:F_1 \to F_d)\] between finitely generated free $S$-modules of the same rank such that $\phi_1\phi_2\cdots\phi_d = f\cdot 1_{F_1}$. We denote the category of matrix factorizations of $f$ with $d$ factors by $\MatFac{S}{d}{f}$. If $\rank_S(F_i) = n$ for all $i$, then we say $X$ is of size $n$.
\end{defi}

A local ring $A$ is said to have \textit{finite Cohen-Macaulay} (CM) \textit{type} if there are only finitely many isomorphism classes of indecomposable objects in the category $\MCM(A)$ of MCM $A$-modules. We adopt the following analogous terminology for the representation type of $\MatFac{S}{d}{f}$.
\begin{defi}\label{def:finite_dMF_type}
We say that $f$ has \textit{finite $d$-\textup{MF} type} if the category $\MatFac{S}{d}{f}$ has, up to isomorphism, only finitely many indecomposable objects.  
\end{defi}

In \cite{knorrer_cohen-macaulay_1987}, Kn\"orrer proved that $R=S/(f)$ has finite CM type if and only if $R^\sharp = S\llbracket z \rrbracket/(f+z^2)$ has finite CM type. The correspondence, given by Eisenbud \cite[Corollary 6.3]{eisenbud_homological_1980}, between matrix factorizations and MCM $R$-modules implies that the number of isomorphism classes of indecomposable objects in $\MCM(R)$ and $\MatFac{S}{2}{f}$ differ by only one. Since $R^\sharp$ is also a hypersurface ring, the same is true for $\MCM(R^\sharp)$ and $\MatFac{S\llbracket z \rrbracket}{2}{f+z^2}$. With this in mind, we can state the following version of Kn\"orrer's theorem.

\begin{thm}\cite[Corollary 2.8]{knorrer_cohen-macaulay_1987}\label{thm:knorrer_4_cats}
Let $f \in \mf n^2$ be non-zero, $R=S/(f)$, and $d=2$ so that $R^\sharp = S\llbracket z \rrbracket/(f+z^2)$ and $\charac \mathbf{k}\neq 2$. Then the following are equivalent:
\begin{enumerate}[label = (\roman*)]
    \item $R$ has finite CM type;
    \item $f$ has finite $2$-MF type;
    \item $R^\sharp$ has finite CM type;
    \item $f+z^2$ has finite $2$-MF type.
\end{enumerate}
\end{thm}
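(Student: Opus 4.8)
The plan is to reduce the four‑fold equivalence to the single equivalence $(i)\Leftrightarrow(iii)$ and then to transport the finiteness property back and forth along an explicit pair of additive functors between matrix factorizations of $f$ and of $f+z^2$.

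First I would dispose of $(i)\Leftrightarrow(ii)$ and $(iii)\Leftrightarrow(iv)$, which are formal consequences of Eisenbud's matrix‑factorization correspondence \cite[Cor.~6.3]{eisenbud_homological_1980}: as recalled in the discussion preceding the theorem, over a hypersurface $S'/(g)$ with $S'$ regular local the assignment $(\phi,\psi)\mapsto\cok(\phi)$ matches indecomposable objects of $\MatFac{S'}{2}{g}$ with indecomposable MCM $S'/(g)$‑modules up to a single trivial‑factorization/free‑module discrepancy, and in particular the set of indecomposables is finite on one side iff it is on the other. Applying this with $g=f$ over $S$ gives $(i)\Leftrightarrow(ii)$, and applying it with $g=f+z^2$ over the regular local ring $S\llbracket z\rrbracket$ gives $(iii)\Leftrightarrow(iv)$. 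It therefore remains to prove $(ii)\Leftrightarrow(iv)$, i.e.\ that $f$ has finite $2$-MF type iff $f+z^2$ does.

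For this I would set up the two comparison functors. Going up, define $\mathsf F\colon\MatFac{S}{2}{f}\to\MatFac{S\llbracket z\rrbracket}{2}{f+z^2}$ on objects by
\[
\mathsf F(\phi,\psi)=\left(\rvect{\phi & z\\ -z & \psi},\ \rvect{\psi & -z\\ z & \phi}\right),
\]
where $z$ abbreviates $z\cdot 1$; the product of the two block matrices equals $(f+z^2)\cdot 1$ because $z$ is central, so the off‑diagonal terms $z\phi-\phi z$ and $z\psi-\psi z$ vanish. Going down, define $\mathsf G\colon\MatFac{S\llbracket z\rrbracket}{2}{f+z^2}\to\MatFac{S}{2}{f}$ by reduction modulo $z$, $\mathsf G(\alpha,\beta)=(\alpha\bmod z,\ \beta\bmod z)$, which lands in $\MatFac{S}{2}{f}$ since $(f+z^2)\bmod z=f$. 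Both functors extend to morphisms in the evident way (conjugate by the same blocks, resp.\ reduce mod $z$) and respect direct sums. Setting $z=0$ in $\mathsf F(\phi,\psi)$ gives at once
\[
\mathsf G\mathsf F(\phi,\psi)\cong(\phi,\psi)\oplus(\psi,\phi),
\]
so every object $X$ of $\MatFac{S}{2}{f}$ is a direct summand of $\mathsf G\mathsf F(X)$. The essential, and harder, companion statement is the dual one: every $Y\in\MatFac{S\llbracket z\rrbracket}{2}{f+z^2}$ is a direct summand of $\mathsf F\mathsf G(Y)$ (one expects in fact $\mathsf F\mathsf G(Y)\cong Y\oplus Y'$ with $Y'$ a ``shift'' of $Y$, mirroring the formula above). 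Proving it amounts to producing an $S\llbracket z\rrbracket$‑change of basis carrying $\rvect{\alpha_0 & z\\ -z & \beta_0}$, with $\alpha_0=\alpha\bmod z$, to $\rvect{\alpha & 0\\ 0 & \beta}$: one writes $\alpha=\alpha_0+z\gamma$ (forced by $\alpha\beta=(f+z^2)\cdot 1$) and analyzes the relations among the coefficient matrices $\alpha_i,\beta_j$ coming from $\alpha\beta=(f+z^2)\cdot 1$, assembling the base change from them. I expect this lemma to be the main obstacle, and it is the one place where the hypothesis $\charac\mathbf k\neq 2$ enters.

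Finally, granting that $X$ is always a direct summand of $\mathsf G\mathsf F(X)$ and that $Y$ is always a direct summand of $\mathsf F\mathsf G(Y)$, the equivalence $(ii)\Leftrightarrow(iv)$ follows by a counting argument symmetric in $\mathsf F$ and $\mathsf G$. Assume $(iv)$ and let $N_1,\dots,N_r$ represent the indecomposable objects of $\MatFac{S\llbracket z\rrbracket}{2}{f+z^2}$. For an indecomposable $X\in\MatFac{S}{2}{f}$, additivity of $\mathsf F$ writes $\mathsf F(X)$ as a finite direct sum of $N_j$'s, so additivity of $\mathsf G$ writes $\mathsf G\mathsf F(X)$ as a finite direct sum of objects drawn from $\{\mathsf G(N_1),\dots,\mathsf G(N_r)\}$; since $X$ is a direct summand of $\mathsf G\mathsf F(X)$ and Krull--Schmidt holds in $\MatFac{S}{2}{f}$ (equivalently, over the complete local ring $R$), $X$ is isomorphic to one of the finitely many indecomposable summands of $\mathsf G(N_1)\oplus\dots\oplus\mathsf G(N_r)$. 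Hence $(ii)$. The reverse implication is identical with the roles of $\mathsf F,\mathsf G$ and of $S,S\llbracket z\rrbracket$ interchanged, using instead that $Y$ is a direct summand of $\mathsf F\mathsf G(Y)$. Combined with the two Eisenbud reductions, this establishes the cycle $(i)\Leftrightarrow(ii)\Leftrightarrow(iv)\Leftrightarrow(iii)$.
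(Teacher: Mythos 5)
The paper does not actually prove this theorem; it is quoted from Kn\"orrer, and the closest thing to a proof in the paper is the argument for Theorem \ref{thm:finite_dMF_type_2} (which, for $d=2$, gives $(ii)\Leftrightarrow(iii)$ via the functors $(-)^\flat$, $(-)^\sharp$ and Proposition \ref{thm:flat_sharp_and_sharp_flat}). Your architecture is the right one and matches that argument: the Eisenbud reductions $(i)\Leftrightarrow(ii)$ and $(iii)\Leftrightarrow(iv)$ are fine, your $\mathsf F$ is (up to signs and a unit $\mu$ with $\mu^2=-1$) the matrix factorization of $X^\sharp$, the identity $\mathsf G\mathsf F(X)\cong X\oplus TX$ is correct, your $\mathsf F$ and $\mathsf G$ do respect morphisms and direct sums (note that a matrix over $S\llbracket z\rrbracket$ is invertible iff its reduction mod $z$ is, so $\mathsf G$ preserves isomorphisms), and the KRS counting at the end is exactly the counting in the proof of Theorem \ref{thm:finite_dMF_type_2}. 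In particular $(iv)\Rightarrow(ii)$ is complete as you wrote it.

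The genuine gap is the companion lemma you flag yourself: that every $Y=(\alpha,\beta)\in\MatFac{S\llbracket z\rrbracket}{2}{f+z^2}$ is a direct summand of $\mathsf F\mathsf G(Y)$, which is all that carries $(ii)\Rightarrow(iv)$. The statement is true, but your proposed route is underspecified and, as sketched, misleading: nothing forces $\alpha$ to be linear in $z$ (writing $\alpha=\alpha_0+z\gamma$ is vacuous, and isomorphic copies of a $z$-linear factorization generally contain arbitrary powers of $z$), and the base change from $\left(\begin{smallmatrix}\alpha_0 & z\\ -z & \beta_0\end{smallmatrix}\right)$ to $\alpha\oplus\beta'$ is not something you can read off from the coefficient relations of $\alpha\beta=(f+z^2)I$. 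The clean way to close the gap is module-theoretic, and it is essentially what Proposition \ref{thm:flat_sharp_and_sharp_flat} supplies for $d=2$: by KRS one may assume $Y$ is indecomposable, hence either trivial (check $\mathsf F\mathsf G$ of $(1,f+z^2)$ and $(f+z^2,1)$ by hand, as you did implicitly) or reduced with $N=\cok_{S\llbracket z\rrbracket}\alpha$ a non-free indecomposable MCM $R^\sharp$-module. For such $N$ one has $\mu_{R^\sharp}(N)=\rank_S(N)$ (this is the $d=2$ remark following Corollary \ref{thm:Ulrich}), whence $\mathsf G(Y)=(\alpha_0,\beta_0)$ and $N^\flat$ are reduced matrix factorizations of $f$ of the same size presenting the same module $N/zN$, so $\mathsf G(Y)\cong N^\flat$; then $\mathsf F\mathsf G(Y)\cong\mathsf F(N^\flat)$ is the matrix factorization of $N^{\flat\sharp}\cong N\oplus\sigma^*N$, which is $Y\oplus TY$ since $\sigma^*N\cong\syz_{R^\sharp}(N)$ up to free summands. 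Here (and in diagonalizing the $z$-blocks) is where $\charac\mathbf{k}\neq 2$ is used. Without some such argument, the central claim of your proof remains an assertion rather than a proof.
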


In Section \ref{section:rep_type} we investigate which of the analogous implications for $d$-fold factorizations hold when $d \ge 2$. Our main result in this direction is the following.

\begin{thmx}\label{thm:finite_dMF_type_2}
Let $d \ge 2$. Then $f$ has finite $d$-MF type if and only if the $d$-fold branched cover $R^\sharp = S\llbracket z \rrbracket/(f+z^d)$ has finite CM type.
\end{thmx}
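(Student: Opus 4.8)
The plan is to realize Kn\"orrer's correspondence in the $d$-fold setting as an honest pair of additive functors $\beta\colon \MatFac{S}{d}{f}\to\MCM(R^\sharp)$ and $\gamma\colon\MCM(R^\sharp)\to\MatFac{S}{d}{f}$ whose two composites each contain the identity as a direct summand, and then to invoke Krull--Schmidt. Fix a scalar $\omega\in\mathbf{k}$ with $\omega^d=-1$ (available since $\mathbf{k}$ is algebraically closed and $\charac\mathbf{k}\nmid d$). Given $X=(\phi_1,\dots,\phi_d)$, form the endomorphism $B_X$ of $F:=\bigoplus_{i=1}^d F_i$ sending $(v_1,\dots,v_d)$ to $(\phi_1 v_2,\dots,\phi_{d-1}v_d,-\phi_d v_1)$; since every cyclic rotation of a $d$-fold factorization again composes to $f\cdot 1$ (each $\phi_i$ is injective because $S$ is a domain and $\det\phi_1\cdots\det\phi_d$ is a nonzero power of $f$, so one may cancel), a short check gives $B_X^{\,d}=-f\cdot 1_F$, whence $\bigl(z-B_X,\ \sum_{k=0}^{d-1}z^{\,d-1-k}B_X^{\,k}\bigr)$ is an ordinary $2$-fold matrix factorization of $f+z^d$ over $S\llbracket z\rrbracket$. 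Set $\beta(X):=\cok_{S\llbracket z\rrbracket}(z-B_X)$, which is MCM over $R^\sharp$ by \cite{eisenbud_homological_1980} (up to identification this $\beta$ is the functor of \cite{tribone_matrix_2021}); a morphism of $d$-fold factorizations induces one of cokernels, so $\beta$ is a functor. For the reverse direction, note that $z$ is integral over $S$, so $R^\sharp\cong S[z]/(z^d+f)$ is finite free over the regular ring $S$; hence every $N\in\MCM(R^\sharp)$ is free as an $S$-module, and if $\zeta_N\in\End_S(N)$ denotes multiplication by $z$ then $\zeta_N^{\,d}=-f\cdot 1_N$, so $(\omega\zeta_N)^d=f\cdot 1_N$. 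Define $\gamma(N):=(\omega\zeta_N,\dots,\omega\zeta_N)\in\MatFac{S}{d}{f}$; an $R^\sharp$-linear map is $S$-linear and commutes with $\zeta_N$, hence with $\omega\zeta_N$, so $\gamma$ is a functor.

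The technical heart is the computation of the two composites. A Krull-intersection argument shows the natural map $F\to\cok(z-B_X)$ is an $S$-module isomorphism intertwining $B_X$ with multiplication by $z$, so $\beta(X)$ ``is'' $(F,B_X)$ and therefore $\gamma\beta(X)=(\omega B_X,\dots,\omega B_X)$, of size $d\cdot\operatorname{size}(X)$. I would then write down the maps $\iota_i\colon F_i\to F$ placing $\omega^{1-i}w$ in the $i$-th slot, together with the scaled coordinate projections, and check directly that $(\iota_i)_i$ is a split monomorphism $X\to\gamma\beta(X)$ in $\MatFac{S}{d}{f}$ --- the one nontrivial compatibility, at the index $i=d$, is exactly where $\omega^d=-1$ enters. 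Dually, $\beta\gamma(N)$ is $N^{\oplus d}$ with $z$ acting through the cyclic endomorphism built from $\omega\zeta_N$, and the maps $v\mapsto(\omega^{1-i}v)_i$ and $(v_i)_i\mapsto\tfrac1d\sum_i\omega^{i-1}v_i$ (here $\charac\mathbf{k}\nmid d$) realize $N$ as a direct summand; in fact $\beta\gamma(N)\cong\bigoplus_{\lambda}{}^{\lambda}N$, the sum over the $d$-th roots of unity of the $\lambda$-twists of the $z$-action, which is the expected $\mu_d$-equivariant shadow. Either way one gets $X\mid\gamma\beta(X)$ for all $X$ and $N\mid\beta\gamma(N)$ for all $N$.

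The conclusion is then formal: both $\MatFac{S}{d}{f}$ and $\MCM(R^\sharp)$ are Krull--Schmidt categories because $S$ and $R^\sharp$ are complete local. If $R^\sharp$ has finite CM type with indecomposables $N_1,\dots,N_r$, then for any indecomposable $X$ we have $\beta(X)\cong\bigoplus_j N_{k_j}$, so $X$ is an indecomposable summand of $\gamma\beta(X)\cong\bigoplus_j\gamma(N_{k_j})$, hence of one of the finitely many objects $\gamma(N_i)$, each of which has only finitely many indecomposable summands; thus $f$ has finite $d$-MF type. The converse is the mirror argument, using $N\mid\beta\gamma(N)$ and that $\beta$ sends the finitely many indecomposable $d$-fold factorizations to objects with finitely many indecomposable summands. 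I do not expect a single deep obstruction; rather the work concentrates in the middle paragraph --- pinning the signs so that $B_X$ factors $f+z^d$ and not $f-z^d$, verifying that $\beta$ and $\gamma$ respect morphisms, and checking the two split-monomorphism identities --- and the one genuinely structural input is the finite freeness of $R^\sharp$ over $S$, which is what allows $\gamma$ to exist and makes $\gamma\beta$ a $d$-fold amplification of the identity rather than something wilder.
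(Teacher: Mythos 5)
Your proposal is correct and follows essentially the same route as the paper: your $\gamma$ and $\beta$ are the paper's functors $(-)^\flat$ and $(-)^\sharp$ (up to redistributing the sign $\mu^{-1}$ across the components versus concentrating it on $\phi_d$), your computation of the two composites as $\bigoplus_k T^k X$ and $\bigoplus_\lambda {}^{\lambda}N$ is exactly Proposition \ref{thm:flat_sharp_and_sharp_flat}, and the concluding Krull--Schmidt argument matches the paper's proof verbatim. The splitting maps you propose, including the observation that $\omega^d=-1$ is needed precisely at the index wrap-around and that $\charac\mathbf{k}\nmid d$ enters in the retraction $\tfrac1d\sum_i\omega^{i-1}v_i$, all check out.
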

As a consequence of Theorem \ref{thm:finite_dMF_type_2}, we give a complete list of polynomials which have only finitely many indecomposable $d$-fold matrix factorizations up to isomorphism.

\begin{thmx}\label{thm:B}
Let $\bf k$ be an algebraically closed field of characteristic zero and $S=\mathbf{k}\llbracket y,x_2,\dots,x_r\rrbracket$. Assume $0\neq f \in (y,x_2,\dots,x_r)^2$ and $d>2$. Then $f$ has finite $d$-\textup{MF} type if and only if, after a possible change of variables, $f$ and $d$ are one of the following:
\begin{enumerate}
    \item[$(A_1)$] \quad $y^2 + x_2^2 + \cdots + x_r^2$ for any $d>2$
    \item[$(A_2)$] \quad $y^3 + x_2^2 + \cdots + x_r^2$ for $d=3,4,5$
    \item[$(A_3)$] \quad $y^4 + x_2^2 + \cdots + x_r^2$ for $d=3$
    \item[$(A_4)$] \quad $y^5 + x_2^2 + \cdots + x_r^2$ for $d=3$.
\end{enumerate}
\end{thmx}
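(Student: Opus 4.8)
The plan is to deduce Theorem \ref{thm:B} from Theorem \ref{thm:finite_dMF_type_2} together with the classical classification of hypersurface singularities of finite CM type. By Theorem \ref{thm:finite_dMF_type_2}, $f$ has finite $d$-MF type if and only if $R^\sharp = S\llbracket z\rrbracket/(f+z^d)$ has finite CM type, so the whole problem is to decide which $f$ and which $d>2$ make $R^\sharp$ of finite CM type. Two standard facts reduce this to a computation. First, a hypersurface ring of finite CM type has an isolated singularity, and since $\charac\mathbf{k}=0$ a direct check shows that $f+z^d$ has an isolated singularity in $S\llbracket z\rrbracket$ if and only if $f$ does in $S$ (the $z$-partial of $f+z^d$ is a unit times $z^{d-1}$, which controls the $z$-direction); so we may assume $f$ is an isolated singularity. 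Second, because $\mathbf{k}$ is algebraically closed of characteristic $0$, the theorem of Buchweitz--Greuel--Schreyer (with Kn\"orrer's periodicity theorem \cite{knorrer_cohen-macaulay_1987}) says that a hypersurface $\mathbf{k}\llbracket x_0,\dots,x_n\rrbracket/(g)$, with $g$ in the square of the maximal ideal, has finite CM type exactly when $g$ is right-equivalent to one of the simple (ADE) singularities $A_k$, $D_k$, $E_6$, $E_7$, $E_8$. Thus it suffices to determine when $f+z^d$ is simple.

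I would organize the rest by the corank of $f$. Since $d\ge 3$ the monomial $z^d$ lies in the cube of the maximal ideal, so the Hessian of $f+z^d$ is that of $f$ bordered by a zero row and column; hence $\operatorname{corank}(f+z^d)=\operatorname{corank}(f)+1$. Every simple hypersurface singularity has corank at most $2$, so $\operatorname{corank}(f)\ge 2$ is impossible, and we may assume $\operatorname{corank}(f)\in\{0,1\}$. By the splitting lemma (available in characteristic $0$), an isolated singularity $f\in(y,x_2,\dots,x_r)^2$ of corank at most $1$ is right-equivalent to $y^m + x_2^2 + \cdots + x_r^2$ for some integer $m\ge 2$ — with $m=2$ precisely when $\operatorname{corank}(f)=0$ and $m\ge 3$ otherwise, the residual one-variable part being nonzero since $f$ is isolated and normalized to $y^m$ using that $\mathbf{k}$ is algebraically closed of characteristic $0$. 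Consequently $f+z^d$ is right-equivalent to $(y^m+z^d)+x_2^2+\cdots+x_r^2$.

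Stripping off the square terms (via Kn\"orrer periodicity, or simply by matching the Buchweitz--Greuel--Schreyer normal forms), $R^\sharp$ has finite CM type if and only if the Brieskorn--Pham plane curve $y^m+z^d$ is a simple singularity, where $m\ge 2$ and $d\ge 3$. If $m=2$, then $y^2+z^d$ is the $A_{d-1}$ singularity, which is simple for all $d$; this is case $(A_1)$. If $m\ge 3$, I would invoke Arnold's list of simple plane-curve singularities: among Brieskorn--Pham curves $y^m+z^d$ the simple ones are exactly those with $\tfrac1m+\tfrac1d>\tfrac12$ — namely $D_4$ when $m=d=3$, $E_6$ when $\{m,d\}=\{3,4\}$, and $E_8$ when $\{m,d\}=\{3,5\}$ (for instance $y^3+z^3$ is an ordinary triple point, hence of type $D_4$, whereas $y^3+z^6$ and $y^4+z^4$ are simple elliptic and so have infinite CM type). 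With $m,d\ge 3$ this forces $(m,d)\in\{(3,3),(3,4),(3,5),(4,3),(5,3)\}$, that is, $f\sim y^3+x_2^2+\cdots+x_r^2$ with $d\in\{3,4,5\}$ (case $(A_2)$), or $f\sim y^4+x_2^2+\cdots+x_r^2$ with $d=3$ (case $(A_3)$), or $f\sim y^5+x_2^2+\cdots+x_r^2$ with $d=3$ (case $(A_4)$). Every step of the argument is an equivalence, so this gives both directions of the theorem at once.

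The substantive new ingredient is Theorem \ref{thm:finite_dMF_type_2}; after that the proof is an assembly of classical singularity theory. The one point that requires care is the passage between $f+z^d$ and its residual part $y^m+z^d$ — that adjoining a nondegenerate quadratic form in extra variables does not affect finite CM type — but this is precisely what the Buchweitz--Greuel--Schreyer normal forms build in, so I do not anticipate a real obstacle; the work lies in getting the corank count and Arnold's list exactly right.
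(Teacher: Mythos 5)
Your proposal is correct, and it reaches the same two pillars the paper rests on (Theorem \ref{thm:finite_dMF_type_2} plus the Buchweitz--Greuel--Schreyer/Kn\"orrer classification of hypersurfaces of finite CM type as the simple singularities), but the reduction to the two-variable computation is carried out differently. You argue via classical singularity theory: since $z^d$ lies in the cube of the maximal ideal, $\operatorname{corank}(f+z^d)=\operatorname{corank}(f)+1$, the corank bound for simple singularities forces $\operatorname{corank}(f)\leq 1$, and the splitting lemma puts $f$ in the normal form $y^m+x_2^2+\cdots+x_r^2$ in one stroke; then you read off which Brieskorn--Pham curves $y^m+z^d$ are simple from Arnol'd's list (your criterion $\tfrac1m+\tfrac1d>\tfrac12$ and the identifications $D_4$, $E_6$, $E_8$ are correct). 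The paper instead stays inside the finite-type machinery: it uses Yoshino's multiplicity bounds ($\operatorname{ord}\leq 3$ in dimension one, $\operatorname{ord}\leq 2$ in dimension $\geq 2$) to see that $f$ has order $2$ when $r\geq 2$, peels off one square at a time by Weierstrass preparation, and at each step applies Kn\"orrer's $d=2$ theorem together with Theorem \ref{thm:finite_dMF_type_2} to pass from $g+x_r^2$ having finite $d$-MF type to $g$ having finite $d$-MF type, inducting down to the one-variable case $f=y^k$ before invoking the classification of simple curve singularities. The net effect is the same; your version front-loads the singularity theory (corank, splitting lemma, Arnol'd adjacencies), while the paper's version substitutes representation-theoretic order bounds and reuses its own Theorem A iteratively, which keeps the argument closer to the tools already developed in the paper. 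Only cosmetic caveats: calling $y^3+z^6$ and $y^4+z^4$ ``simple elliptic'' is a slight abuse for plane curves (they are the unimodal families $J_{10}$ and $X_9$), and the splitting lemma should be flagged as valid formally in characteristic $0$ (it is, e.g.\ by iterated Weierstrass preparation, which is exactly how the paper implements it).
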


The main tool in Section \ref{section:rep_type} is a pair of functors $(-)^\flat : \MCM(R^\sharp) \to \MatFac{S}{d}{f}$ and $(-)^\sharp :\MatFac{S}{d}{f} \to \MCM(R^\sharp)$. The functors we define play a similar role as the ones in \cite[Chapter 8, \S 2]{leuschke_cohen-macaulay_2012} given by the same notation. Following the results in \cite[Chapter 8, \S3]{leuschke_cohen-macaulay_2012} and \cite[Proposition 2.7]{knorrer_cohen-macaulay_1987}, we investigate the decomposability of $(-)^\flat$ and $(-)^\sharp$ in Section \ref{section:decomp_sharp_flat}.

A matrix factorization $X= (\phi_1,\dots,\phi_d) \in \MatFac{S}{d}{f}$ is called \textit{reduced} if the map $\phi_k$ is minimal for each $k \in \Z_d$. Equivalently, $X$ is reduced if, after choosing bases, all entries of $\phi_k$ lie in the maximal ideal of $S$ for all $k \in \Z_d$. In the original setting given by Eisenbud ($d=2$), matrix factorizations which are reduced are the only interesting ones. In particular, the only indecomposable non-reduced matrix factorizations with $2$ factors are $(1,f)$ and $(f,1)$. For $d >2$, there are often non-reduced $d$-fold matrix factorizations of $f$ which are also indecomposable and therefore play a role in Theorem \ref{thm:finite_dMF_type_2} and Theorem \ref{thm:B}. 

In Section \ref{section:reduced_dMFs}, we focus only on reduced matrix factorizations. We show that, as long as $d$ is not too large, reduced matrix factorizations of $f$ with $d$ factors correspond to \textit{Ulrich modules} over $R^\sharp$, that is, modules which are MCM and maximally generated in the sense of \cite{brennan_maximally_1987}. We let $\text{ord}(f)$ denote the maximal integer $e$ such that $f \in \mf n^e$.

\begin{thmx}\label{thm:C}
Assume that $d \leq \textup{ord}(f)$ and that $f+z^d$ is irreducible. Then $f$ has finite reduced $d$-MF type if and only if there are, up to isomorphism, only finitely many indecomposable Ulrich $R^\sharp$-modules.
\end{thmx}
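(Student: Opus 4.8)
The plan is to refine the functors $(-)^\flat$ and $(-)^\sharp$ from Section~\ref{section:rep_type} to a correspondence between \emph{reduced} $d$-fold matrix factorizations of $f$ and \emph{Ulrich} $R^\sharp$-modules, and then transport the finiteness statement across this correspondence. First I would recall that $(-)^\sharp$ sends a size-$n$ matrix factorization $X=(\phi_1,\dots,\phi_d)$ to the cokernel of a matrix over $S\llbracket z\rrbracket$ built block-wise from the $\phi_k$ together with $z$ on the appropriate blocks (the analogue of Kn\"orrer's construction), producing an MCM $R^\sharp$-module $X^\sharp$ that is generated by $n$ elements, while $(-)^\flat$ is a quasi-inverse up to the non-reduced summands. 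The first key step is to compute the minimal number of generators $\mu_{R^\sharp}(X^\sharp)$: when $X$ is reduced, all entries of each $\phi_k$ lie in $\mf n$, so the presentation matrix of $X^\sharp$ has all entries in the maximal ideal $\mf n + (z)$ of $S\llbracket z\rrbracket$, giving $\mu_{R^\sharp}(X^\sharp)=n$; and $X^\sharp$ has rank one as an $R^\sharp$-module (it is MCM over a hypersurface domain, since $f+z^d$ is assumed irreducible). The hypothesis $d\le\operatorname{ord}(f)$ enters exactly here: it guarantees that the multiplicity of $R^\sharp$ equals $d$ — indeed $e(R^\sharp)=\min(d,\operatorname{ord}(f))=d$ — so that the Ulrich (maximally generated) condition $\mu_{R^\sharp}(M)=e(R^\sharp)\cdot\operatorname{rank}(M)$ for a rank-one MCM module $M$ becomes precisely $\mu_{R^\sharp}(M)=d$. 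Combining these, $X^\sharp$ is Ulrich if and only if its size is exactly $d$, i.e. $F_i\cong S^d$, which I would show is forced (for reduced indecomposables) or at least characterizes the Ulrich image.

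The second key step is the converse direction: given an indecomposable Ulrich $R^\sharp$-module $M$, I would show $M^\flat$ is a reduced matrix factorization. Because $M$ is MCM over the hypersurface $R^\sharp$, it corresponds via the $d=2$-type Eisenbud/Kn\"orrer machinery (as packaged by the functors of Section~\ref{section:rep_type}) to a $d$-fold matrix factorization $M^\flat=(\psi_1,\dots,\psi_d)$ of $f$; the point is that the Ulrich condition $\mu_{R^\sharp}(M)=d\cdot\operatorname{rank}(M)$ forces each $\psi_k$ to have all entries in $\mf n$. Concretely, a non-minimal $\psi_k$ would let one split off a trivial summand of the form $(\dots,1,\dots)$, which on the $R^\sharp$-side would either decompose $M$ or drop $\mu_{R^\sharp}(M)$ below the maximal value $d\cdot\operatorname{rank}(M)$ — contradicting that $M$ is an indecomposable Ulrich module. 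This uses the decomposability analysis of $(-)^\flat$ and $(-)^\sharp$ that the paper develops (the analogues of \cite[Chapter~8, \S3]{leuschke_cohen-macaulay_2012} and \cite[Proposition~2.7]{knorrer_cohen-macaulay_1987}), so I would cite Section~\ref{section:decomp_sharp_flat} rather than redo it.

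Having set up the bijection $\{$indecomposable reduced $X\in\MatFac{S}{d}{f}\}\leftrightarrow\{$indecomposable Ulrich $R^\sharp$-modules$\}$ (modulo isomorphism), the finiteness equivalence is immediate: one side has finitely many isomorphism classes if and only if the other does. I would phrase the final argument as: if $f$ has finite reduced $d$-MF type, then there are finitely many indecomposable reduced $X$, hence finitely many indecomposable Ulrich $R^\sharp$-modules via $(-)^\sharp$; conversely, finitely many indecomposable Ulrich modules yield, via $(-)^\flat$ and the reducedness established above, finitely many indecomposable reduced matrix factorizations — and every reduced matrix factorization decomposes into reduced indecomposables (reducedness is preserved under taking direct summands, since submatrices of matrices with entries in $\mf n$ still have entries in $\mf n$).

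I expect the main obstacle to be the careful bookkeeping in the second step: verifying that the Ulrich condition is \emph{exactly} equivalent to reducedness of $M^\flat$, rather than merely implied in one direction. The subtle point is that $\mu_{R^\sharp}(X^\sharp)$ must be computed as the size of $X$ on the nose when $X$ is reduced — this requires knowing the presentation of $X^\sharp$ is minimal, which in turn relies on all the $z$-blocks and all the $\phi_k$-blocks landing in the maximal ideal $\mf n S\llbracket z\rrbracket + (z)S\llbracket z\rrbracket$ — together with the multiplicity computation $e(R^\sharp)=d$, whose proof needs $d\le\operatorname{ord}(f)$ (if $d>\operatorname{ord}(f)$ the leading form of $f+z^d$ is just the leading form of $f$, so $e(R^\sharp)=\operatorname{ord}(f)<d$ and the clean equivalence breaks down — which is presumably why the hypothesis is there). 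I would make sure to isolate this multiplicity computation as a short lemma before the main proof.
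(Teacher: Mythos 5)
Your overall architecture is the right one and matches the paper's: identify reduced factorizations with Ulrich modules via $(-)^\flat$ and $(-)^\sharp$, using the multiplicity computation $e(R^\sharp)=\min(d,\ord(f))=d$ (which is exactly where $d\leq\ord(f)$ enters), and then transfer finiteness using KRS and the fact that reducedness passes to summands. The second half of your argument (Ulrich $\Rightarrow$ $M^\flat$ reduced, via the observation that a unit entry in the multiplication-by-$z$ matrix would drop $\mu_{R^\sharp}(M)$ below $\rank_S(M)$) and the final finiteness transfer are essentially the paper's Lemma \ref{thm:reduced_MFs_vs_rank_S} and Proposition \ref{thm:finite_reduced_type}.

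However, your first key step contains a genuine error. For $X$ of size $n$, the module $X^\sharp=\bigoplus_{k}F_{d-k}$ is free of rank $dn$ over $S$, has $\rank_{R^\sharp}(X^\sharp)=n$ (not $1$ --- being MCM over a hypersurface domain does not force rank one), and satisfies $\mu_{R^\sharp}(X^\sharp)\leq dn$ with equality precisely when $X$ is reduced (not ``generated by $n$ elements''). Consequently your conclusion that ``$X^\sharp$ is Ulrich if and only if its size is exactly $d$, i.e.\ $F_i\cong S^d$'' is false: the Ulrich condition $\mu_{R^\sharp}(X^\sharp)=e(R^\sharp)\cdot\rank_{R^\sharp}(X^\sharp)=dn=\rank_S(X^\sharp)$ is equivalent to reducedness of $X$ and imposes no constraint on $n$. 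A concrete counterexample is the size-one reduced factorization $(y^2,y,\dots,y)\in\MatFac{\mathbf{k}\llbracket y\rrbracket}{a}{y^{a+1}}$ of Example \ref{example:x^a_y^a+i}, whose sharp is the Ulrich module $\mathbf{k}\llbracket t\rrbracket$ over $\mathbf{k}\llbracket t^a,t^{a+1}\rrbracket$; likewise $X_\beta=(y^2,y,y)$ in Example \ref{example:E_6}. What your argument is missing is the numerical input of Lemma \ref{thm:rank_S(N)}: comparing the minimal presentation of $N/zN$ over $S$ (coming from a reduced $2$-fold factorization of $f+z^d$) with the presentation by multiplication by $z$ on the free $S$-module $N$ yields both $\mu_{R^\sharp}(N)\leq\rank_S(N)$ and the identity $\rank_S(N)=d\cdot\rank_{R^\sharp}(N)$ (via $\det\phi=v'f^k$ and $\phi^d=-f\cdot I_r$). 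With that lemma in place your plan goes through, but as written the characterization of the Ulrich image of $(-)^\sharp$ is wrong. One smaller caution: for $d>2$ a non-minimal $\phi_k$ does not in general let you split off a trivial summand (the factorizations $(y^3,y,1)$ and $X_\alpha$ of Example \ref{example:E_6} are non-reduced yet indecomposable), so the ``split off $(\dots,1,\dots)$'' mechanism should be replaced by the minimal-presentation argument you mention as the alternative.
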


\subsection*{Notation} All indices are taken modulo $d$ unless otherwise specified.

\subsection*{Acknowledgements} The first-named author was supported by a CUSE grant from Syracuse University.

\section{Finite matrix factorization type}\label{section:rep_type}

The key lemma in Kn\"orrer's proof of Theorem~\ref{thm:knorrer_4_cats} states that for  MCM modules $M$ over $R$ and $N$ over the double branched cover $R^\# = S\llbracket z \rrbracket/(f+z^2)$, we have isomorphisms
\[
\syz^{R^\#}_1(M)/z \syz^{R^\#}_1(M) \cong M \oplus \syz^R_1(M)
\]
and 
\[
\syz^{R^\#}_1(N/zN) \cong N \oplus \syz^{R^\#}_1(N)\,.
\]
In this section we extend Kn\"orrer's lemma to arbitrary $d$-fold branched covers and use it to prove Theorem~\ref{thm:finite_dMF_type_2}.

As in the Introduction, we fix $d\ge 2$ and let $(S,\mf n, \mathbf{k})$ be a complete regular local ring with $\mathbf{k}$ algebraically closed of characteristic not dividing $d$. We also fix a non-zero element $f\in \mf n^2$ and let $R=S/(f)$. We recall a few facts about matrix factorizations of $f$ with two or more factors.

\begin{lem}[\cite{tribone_matrix_2021}] Let $X=(\phi_1,\phi_2,\dots,\phi_d) \in \MatFac{S}{d}{f}$ and $i \in \Z_d$.
\begin{enumerate}[label = (\roman*)]
    \item The $d$-tuple $(\phi_i,\phi_{i+1},\dots,\phi_d,\phi_1,\dots,\phi_{i-1})$ also forms a matrix factorization of $f$ with $d$ factors, that is, $\phi_i\phi_{i+1}\cdots\phi_d\phi_1\cdots\phi_{i-1}=f\cdot 1_{F_i}$.
    \item The $R$-module $\cok\phi_i$ is maximal Cohen-Macaulay.
    \item The Krull-Remak-Schmidt Theorem (KRS) holds in the additive category\footnote{Morphisms and direct sums in $\MatFac{S}{d}{f}$ are defined component-wise (see \cite[1.4]{tribone_matrix_2021}). The zero object is the matrix factorization with $F_i = 0$ and $\phi_i = 0$ for all $i \in \Z_d$.} $\MatFac{S}{d}{f}$ (since $S$ is complete). That is, any object $X \in \MatFac{S}{d}{f}$ decomposes into a finite direct sum of indecomposable objects such that the decomposition is unique up to isomorphism and permutation of the summands.
\end{enumerate}

\end{lem}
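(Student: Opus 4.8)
The three parts are best handled in order, and each becomes short once one works over the fraction field $K$ of $S$ — legitimate because the regular local ring $S$ is a domain and each $F_i$, being free, is torsion-free, so $F_i \hookrightarrow F_i\otimes_S K$. For (i), the plan is to tensor the defining identity $\phi_1\phi_2\cdots\phi_d = f\cdot 1_{F_1}$ with $K$: since $f$ becomes a unit, the composite is an isomorphism, and because the $F_j\otimes_S K$ all have the same finite dimension, an induction peeling off the outer factors shows each $\phi_j\otimes_S K$ is an isomorphism. Now set $A=\phi_1\circ\cdots\circ\phi_{i-1}\colon F_i\to F_1$ and $B=\phi_i\circ\cdots\circ\phi_d\colon F_1\to F_i$, so that $A\circ B = \phi_1\cdots\phi_d = f\cdot 1_{F_1}$; the assertion is that $B\circ A = f\cdot 1_{F_i}$. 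Over $K$, where $A$ is invertible, the computation $A\circ(B\circ A) = (A\circ B)\circ A = f\cdot A$ forces $B\circ A = f\cdot 1_{F_i}$, and since $B\circ A$ and $f\cdot 1_{F_i}$ are $S$-linear endomorphisms of the torsion-free module $F_i$ that agree after $-\otimes_S K$, they agree over $S$.

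For (ii), apply (i) to the cyclic rotation beginning at $i+1$ to get $\phi_i\circ(\phi_{i+1}\circ\cdots\circ\phi_{i-1}) = f\cdot 1_{F_i}$, whence $f\cdot F_i\subseteq \phi_i(F_{i+1})$, so $f$ annihilates $\cok\phi_i$ and $\cok\phi_i$ is an $R$-module. Each $\phi_j$ is injective, since its determinant divides $f^n\neq 0$ in the domain $S$, so $0\to F_{i+1}\xrightarrow{\phi_i}F_i\to\cok\phi_i\to 0$ is an $S$-free resolution of length at most $1$. The Auslander--Buchsbaum formula then gives $\dep_S\cok\phi_i \ge \dim S - 1 = \dim R$; depth is intrinsic to the module, so $\dep_R\cok\phi_i\ge \dim R$, and since always $\dep_R\cok\phi_i\le\dim_R\cok\phi_i\le \dim R$, we conclude $\cok\phi_i$ is MCM over $R$.

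For (iii), I would use the standard Krull--Remak--Schmidt machinery for additive categories. First, $\MatFac{S}{d}{f}$ is idempotent-complete: an idempotent endomorphism $(e_i)$ of $X=(\phi_i)$ satisfies $e_i^2=e_i$ and $e_i\phi_i=\phi_i e_{i+1}$, so each $F_i$ splits (over the local ring $S$) as $\Ima e_i\oplus\ker e_i$ into free modules, and the relations $e_i\phi_i=\phi_i e_{i+1}$ show the $\phi_i$ respect this splitting, producing the idempotent's image and cokernel as matrix factorizations. Second, for any object $X$ the endomorphism ring $\End_{\MatFac{S}{d}{f}}(X)$ is a subring of $\prod_i\End_S(F_i)$, hence module-finite over the complete local ring $S$, and therefore semiperfect. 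The general theorem that an idempotent-complete additive category all of whose objects have semiperfect endomorphism rings has direct-sum decompositions into objects with local endomorphism rings, unique up to isomorphism and permutation, now yields the claim; alternatively one embeds $\MatFac{S}{d}{f}$ into finitely generated modules over a module-finite $S$-algebra and invokes KRS there, as in \cite{leuschke_cohen-macaulay_2012}.

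The only step with any friction is (iii): parts (i) and (ii) are routine linear algebra over the domain $S$, whereas (iii) requires correctly assembling idempotent-completeness, semiperfectness of endomorphism rings over the complete local ring $S$, and the categorical Krull--Remak--Schmidt statement. None of these is difficult, but care is needed to verify that the splitting of the $F_i$ induced by an idempotent is genuinely compatible with the maps $\phi_i$, which is exactly what the commutation relations $e_i\phi_i=\phi_i e_{i+1}$ provide.
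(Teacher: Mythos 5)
Your arguments are correct. Note that the paper offers no proof of this lemma at all --- it is quoted as a package of known facts from \cite{tribone_matrix_2021} --- so there is nothing internal to compare against; what you have written is the standard way to establish each part. Parts (i) and (ii) are exactly the expected localize-at-the-fraction-field and Auslander--Buchsbaum arguments. For (iii), your outline (idempotent-completeness plus semiperfectness of endomorphism rings over the complete local ring $S$, then the categorical Krull--Remak--Schmidt theorem as in \cite{leuschke_cohen-macaulay_2012}) is the right one. Two small points worth making explicit: when you split an idempotent $(e_i)$, you should observe that the free summands $\Ima e_i$ all have the same rank --- this follows because each $\phi_i\otimes_S K$ is an isomorphism carrying $\Ima e_{i+1}\otimes_S K$ into $\Ima e_i\otimes_S K$, so the dimensions are cyclically non-increasing and hence equal --- otherwise the summands are not literally objects of $\MatFac{S}{d}{f}$ as defined (all $F_i$ of equal rank). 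And in (ii) one should either exclude or separately dispose of the degenerate case $\cok\phi_i=0$ (e.g.\ when some $\phi_i$ is an identity), which is harmless under the usual convention that the zero module is maximal Cohen--Macaulay. Neither point affects the validity of the proof.
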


The $d$-fold branched cover of $R$ is the hypersurface ring $R^\sharp = S\llbracket z \rrbracket/(f+z^d)$. As an $S$-module, $R^\sharp$ is finitely generated and free with basis given by $\{1,z,z^2,\dots,z^{d-1}\}$. Consequently, a finitely generated $R^\sharp$-module $N$ is MCM over $R^\sharp$ if and only if it is free over $S$ \cite[Proposition 1.9]{yoshino_cohen-macaulay_1990}. Furthermore, multiplication by $z$ on $N$ defines an $S$-linear map $\phi:N\to N$ which satisfies $\phi^d = -f\cdot 1_N$. Conversely, given a free $S$-module $F$ and a homomorphism $\phi:F \to F$ satisfying $\phi^d = -f\cdot 1_F$, the pair $(F,\phi)$ defines an MCM $R^\sharp$-module whose $z$-action is given by the map $\phi$. We will use these perspectives interchangeably throughout.

Since $S$ is complete and $\charac \mathbf{k} \nmid d$, \cite[A.31]{leuschke_cohen-macaulay_2012} implies that the polynomials $x^d+1$ and $x^d-1$ in $S[x]$ each have $d$ distinct roots in $S$. We let $\omega \in S$ be a \textit{primitive} $d^{\text{th}}$ root of $1$ in the sense that $\omega^d = 1$ and $\omega^s \neq 1$ for any $1\leq s < d$. Let $\mu \in S$ be any $d^{\text{th}}$ root of $-1$. We start with a pair of functors between the categories $\MCM(R^\sharp)$ and $\MatFac{S}{d}{f}$.

\begin{defi}\label{def:flat_sharp}\
\begin{enumerate}[label = (\roman*)]
    \item Let $N \in \MCM(R^\sharp)$ and set $\phi: N \to N$ to be the $S$-linear homomorphism representing multiplication by $z$ on $N$. Since $(\mu\phi)^d = f \cdot 1_N$, we define
\[N^\flat = (\mu\phi,\mu\phi,\dots,\mu\phi) \in \MatFac{S}{d}{f}.\] Given a homomorphism $g: N \to N'$ of $R^\sharp$-modules, we define $g^\flat = (g,g,\dots,g)$ which is a morphism of matrix factorizations in $\Hom_{\MatFac{S}{d}{f}}(N^\flat,(N')^\flat)$. These operations define a functor $( - )^\flat : \MCM(R^\sharp) \to \MatFac{S}{d}{f}$.
\item Let $X=(\phi_1:F_2 \to F_1,\phi_2:F_3\to F_2,\dots,\phi_d:F_1\to F_d) \in \MatFac{S}{d}{f}$. Define an MCM $R^\sharp$-module by setting $X^\sharp = \bigoplus_{k=0}^{d-1}F_{d-k}$ as an $S$-module with $z$-action given by:
\[z\cdot(x_d,x_{d-1},\dots,x_2,x_1) \coloneqq (\mu^{-1}\phi_d(x_1),\mu^{-1}\phi_{d-1}(x_d),\dots,\mu^{-1}\phi_1(x_2))\] for all $x_i \in F_i, i \in \Z_d$. Given a morphism $\alpha=(\alpha_1,\alpha_2,\dots,\alpha_d): X \to X'$, define an $R^\sharp$-homomorphism $\alpha^\sharp = \bigoplus_{k=0}^{d-1}\alpha_{d-k} \in \Hom_{R^\sharp}(X^\sharp,(X')^\sharp)$. This gives a functor $(-)^\sharp:\MatFac{S}{d}{f} \to \MCM(R^\sharp)$.
\end{enumerate}
\end{defi}

\begin{remark}\label{remark:mu}
The role of $\mu$ in the definition of $N^\flat$ is to obtain a $d$-fold factorization of $f$ (instead of $-f$) and to do so in a symmetric way. It is important to note that the isomorphism class of $N^\flat \in \MatFac{S}{d}{f}$ is independent of choice of $\mu$. To see this, observe that if $\mu'$ is another root of $x^d + 1$, then $\mu' = \omega^j \mu$ for some $j \in \Z_d$. Using this fact, one can construct an isomorphism $(\mu\phi_N,\mu\phi_N,\dots,\mu\phi_N) \cong (\mu'\phi_N,\mu'\phi_N,\dots,\mu'\phi_N)$ in $\MatFac{S}{d}{f}$. Similarly, $\mu^{-1}$ in the definition of $X^\sharp$ ensures that we obtain a module over $R^\sharp$. The isomorphism class of $X^\sharp$ in $\MCM(R^\sharp)$ is also independent of the choice of $\mu$.
\end{remark}

Consider the automorphism $\sigma:R^\sharp \to R^\sharp$ which fixes $S$ pointwise and maps $z$ to $\omega z$. This automorphism acts on the category of MCM $R^\sharp$-modules in the following sense: For each $N \in \MCM(R^\sharp)$, let $(\sigma^k)^*N$ denote the MCM $R^\sharp$-module obtained by restricting scalars along $\sigma^k$. Since $\sigma^d = 1_{R^{\sharp}}$, the mapping $N \mapsto \sigma^*N$ forms an autoequivalence of the category $\MCM(R^\sharp)$.

The functor $T: \MatFac{S}{d}{f} \to \MatFac{S}{d}{f}$ given by $T(\phi_1,\phi_2,\dots,\phi_d) = (\phi_2,\phi_3,\dots,\phi_d,\phi_1)$, which we refer to as the \textit{shift functor}, also gives an autoequivalence, this time of $\MatFac{S}{d}{f}$.

\begin{prop}\label{thm:flat_sharp_and_sharp_flat}
Let $N$ be an MCM $R^\sharp$-module and $X \in \MatFac{S}{d}{f}$. Then
\[X^{\sharp\flat} \cong \bigoplus_{k \in \Z_d}T^k(X) \quad\text{ and }\quad N^{\flat\sharp} \cong \bigoplus_{k \in \Z_d}(\sigma^k)^*N.\]
\end{prop}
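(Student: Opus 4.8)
The plan is to prove both isomorphisms by directly unwinding Definition~\ref{def:flat_sharp}: I will describe each of the four objects $X^{\sharp\flat}$, $\bigoplus_{k\in\Z_d}T^k(X)$, $N^{\flat\sharp}$, and $\bigoplus_{k\in\Z_d}(\sigma^k)^*N$ explicitly as a free $S$-module together with its structure map(s), and then exhibit the claimed identifications.

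For the first isomorphism, write $X=(\phi_1,\dots,\phi_d)$ with $\phi_i\colon F_{i+1}\to F_i$, and let $\psi$ be multiplication by $z$ on $X^\sharp=\bigoplus_{j\in\Z_d}F_j$. Reading off the defining formula shows that $\mu\psi$ is the ``rotation'' map $\Phi\colon\bigoplus_j F_j\to\bigoplus_j F_j$ sending the $F_{j+1}$-summand into the $F_j$-summand by $\phi_j$; the cyclic identity $\phi_j\phi_{j+1}\cdots\phi_{j-1}=f\cdot 1_{F_j}$ (the Lemma above) says exactly that $\Phi^d=f\cdot 1$, so $X^{\sharp\flat}=(\mu\psi,\dots,\mu\psi)=(\Phi,\dots,\Phi)$. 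On the other hand, by the definition of direct sum in $\MatFac{S}{d}{f}$, the object $\bigoplus_{k\in\Z_d}T^k(X)$ has, in position $i$, underlying module $\bigoplus_{k\in\Z_d}F_{i+k}=\bigoplus_{j\in\Z_d}F_j$ and $i$th structure map $\bigoplus_k\phi_{i+k}$; relabelling at position $i$ the $k$th summand $F_{i+k}$ by its actual index $i+k$, this $i$th map becomes $\Phi$, independently of $i$. The relabellings are permutation isomorphisms $\alpha_i$ that commute with the structure maps by construction, so $\alpha=(\alpha_1,\dots,\alpha_d)\colon\bigoplus_k T^k(X)\to X^{\sharp\flat}$ is an isomorphism.

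For the second isomorphism, observe first that $N^\flat=(\mu\phi,\dots,\mu\phi)$, where $\phi$ is multiplication by $z$ on $N$ and $(\mu\phi)^d=\mu^d\phi^d=(-1)(-f\cdot 1_N)=f\cdot 1_N$. Substituting $X=N^\flat$ into the definition of $(-)^\sharp$ and cancelling the factors $\mu^{-1}\mu$, one finds $N^{\flat\sharp}=N^{\oplus d}$ with $z$ acting by the block matrix $C\otimes\phi$ (the $d\times d$ matrix over $\End_S(N)$ with $(p,q)$ entry $\phi$ when $q\equiv p-1\pmod d$ and $0$ otherwise), $C$ being the cyclic permutation matrix $e_q\mapsto e_{q+1}$. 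Meanwhile $\bigoplus_{k\in\Z_d}(\sigma^k)^*N=N^{\oplus d}$ with $z$ acting diagonally by $D\otimes\phi$, where $D=\mathrm{diag}(1,\omega,\dots,\omega^{d-1})$. Since $\charac\mathbf{k}\nmid d$ and $S$ is complete, $\omega$ has order $d$ in $S^\times$, so $x^d-1=\prod_{j=0}^{d-1}(x-\omega^j)$ and the evaluation $\prod_{j=1}^{d-1}(1-\omega^j)=d$ — a unit of $S$ — forces every difference $\omega^i-\omega^j$ $(i\ne j)$ to be a unit of $S$. Hence the Vandermonde matrix $U=(\omega^{-(a-1)(b-1)})_{1\le a,b\le d}$ is invertible over $S$; its columns are eigenvectors of $C$, so $U^{-1}CU=D$. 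As the entries of $U$ are scalars they commute with $\phi$ inside $\End_S(N)$, so $(U^{-1}\otimes 1_N)(C\otimes\phi)(U\otimes 1_N)=(U^{-1}CU)\otimes\phi=D\otimes\phi$; thus $U\otimes 1_N$ is an $S$-automorphism of $N^{\oplus d}$ intertwining the two $z$-actions, hence the sought isomorphism of $R^\sharp$-modules between $\bigoplus_k(\sigma^k)^*N$ and $N^{\flat\sharp}$.

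I do not expect a genuine obstacle: the whole argument is bookkeeping from the definitions. The two places that need care are (a) the mod-$d$ index matching in the first isomorphism, where the order reversal built into $(-)^\sharp$ must be reconciled with the cyclic shifts $T^k$; and (b) the observation in the second isomorphism that the diagonalising (discrete-Fourier) matrix $U$ is invertible over $S$ itself, not just over $\mathbf{k}$ — precisely where completeness of $S$ and $\charac\mathbf{k}\nmid d$ enter.
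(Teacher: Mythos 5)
Your proposal is correct and takes essentially the same route as the paper: the first isomorphism is the identical block-permutation computation identifying $X^{\sharp\flat}$ with $(\Phi,\dots,\Phi)$ for the cyclic block matrix $\Phi$, and your conjugation of the cyclic-shift $z$-action by the Vandermonde/DFT matrix $U$ is exactly the paper's explicit pair of mutually inverse maps $g=(g_k)$ and $s=(s_k)$ (the rows of $U^{-1}$ scaled by $\tfrac{1}{d}$ and the columns of $U$), with the same reliance on $d\in S^\times$. The only cosmetic difference is that you phrase the orthogonality relations $g_is_\ell=\delta_{i\ell}$ as diagonalization of the permutation matrix $C$.
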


\begin{proof}
Let $X = (\phi_1:F_2\to F_1,\dots,\phi_d:F_1\to F_d) \in \MatFac{S}{d}{f}$. Since $X^\sharp = F_d\oplus F_{d-1}\oplus \cdots \oplus F_1$ as an $S$-module, multiplication by $z$ on $X^\sharp$ is given by $\mu^{-1}\phi$ where
\[\phi = \begin{pmatrix}
0 &0 &\cdots &0 &\phi_d\\
\phi_{d-1} &0 &\cdots &0 &0\\
0 &\phi_{d-2} &\ddots &\vdots &\vdots\\
\vdots &\vdots &\ddots &\ddots& \vdots\\
0 &0 &\cdots          &\phi_1 &0
\end{pmatrix}.\] Therefore, $X^{\sharp\flat} = (\phi,\phi,\dots,\phi) \in \MatFac{S}{d}{f}$. One can perform row and column operations to see that that $(\phi,\phi,\dots,\phi) \cong \bigoplus_{k\in \Z_d}T^k(X)$.

Notice that the first half of the proof is valid in any characteristic as long as there exists an element $\mu \in S$ satisfying $\mu^d = -1$. For instance, if $d$ is odd, then $\mu = -1$ is a valid choice. However, the second half of the proof explicitly makes use of the fact that $\charac \bf k$ does not divide $d$.

In order to show the second isomorphism, let $N \in \MCM(R^\sharp)$ and set $\phi: N \to N$ to be the $S$-linear map representing multiplication by $z$ on $N$. Then
\(N^\flat = (\mu\phi:N \to N,\dots,\mu\phi: N \to N) \in \MatFac{S}{d}{f}\). Thus, as an $S$-module,
\(N^{\flat\sharp} = N \oplus N \oplus \cdots \oplus N,\) the direct sum of $d$ copies of the free $S$-module $N$. The $z$-action on $N^{\flat\sharp}$ is given by 
\[\begin{split}z \cdot (n_d,n_{d-1},\dots,n_1) &= (\mu^{-1}\mu\phi(n_1),\mu^{-1}\mu\phi(n_d),\dots,\mu^{-1}\mu\phi(n_2))\\
&= (zn_1,zn_d,\dots,zn_2),
\end{split}\] for any $n_i \in N , i \in \Z_d$.
Let $k \in \Z_d$ and define a map $g_k: N^{\flat\sharp} \to (\sigma^k)^*N$ by mapping
\[n=(n_d,n_{d-1},\dots,n_1) \mapsto \frac{1}{d}\sum_{j=0}^{d-1}\omega^{jk}n_{d-j}\] for any $n \in N^{\flat\sharp}$.
Note that for $m \in (\sigma^k)^*N$, $z\cdot m = \omega^k z m$ by definition. Therefore, for $n=(n_d,\dots,n_1) \in N^{\flat\sharp}$, we have that
\[\begin{split}
    z\cdot g_k(n) &= \frac{1}{d}\sum_{j=0}^{d-1}z\cdot(\omega^{jk}n_{d-j})\\
                  &= \frac{1}{d}\sum_{j=0}^{d-1}\omega^{(j+1)k}zn_{d-j}\\
                  &= \frac{1}{d}\left(\omega^k z n_{d} + \omega^{2k}zn_{d-1} + \cdots + \omega^{(d-1)k}zn_{2} + zn_1\right)\\
                  &= g_k(z \cdot n).
\end{split}\]
In other words, $g_k$ is an $R^\sharp$-homomorphism. Putting these maps together we have an $R^\sharp$-homomorphism 
\[g = \begin{pmatrix}g_0\\ g_1 \\ \vdots \\ g_{d-1} \end{pmatrix}: N^{\flat\sharp} \to \bigoplus_{k=0}^{d-1}(\sigma^k)^*N.\] In the other direction, we have $R^\sharp$-homomorphisms $s_k: (\sigma^k)^*N \to N^{\flat\sharp}$, $k \in \Z_d$, given by
\[s_k(m) = (m, \omega^{-k}m,\omega^{-2k}m,\dots,\omega^{-(d-1)k}m)\] for any $m \in (\sigma^k)^*N$. For each $k \in \Z_d$ and $m \in (\sigma^k)^*N$, we have that $g_ks_k(m) = m$. On the other hand, if $i\neq \ell \in \Z_d$, then
\[\begin{split}
    g_is_\ell(m) &= g_i(m,\omega^{-\ell} m,\omega^{-2\ell}m,\dots,\omega^\ell m)\\
                 &= \frac{1}{d}\sum_{j=0}^{d-1}\omega^{j(i-\ell)}m\\
                 &=0.
\end{split}\] Therefore, setting $s = \begin{pmatrix}s_0 & s_1 &\cdots &s_{d-1}\end{pmatrix}$, we have that $gs$ is the identity on $\bigoplus_{k=0}^{d-1}(\sigma^k)^*N$ and so $g$ is a split surjection. However, since both the target and source of $g$ have the same rank as free $S$-modules, we can conclude that $g$ is an isomorphism of $R^\sharp$-modules.
\end{proof}

\begin{cor}\label{thm:cor_knorrer}
\
\begin{enumerate}[label = (\roman*)]
    \item For each $X \in \MatFac{S}{d}{f}$, there exists $N \in \MCM(R^\sharp)$ such that $X$ is isomorphic to a summand of $N^\flat$.
    \item For each $N \in \MCM(R^\sharp)$, there exists $X \in \MatFac{S}{d}{f}$ such that $N$ is isomorphic to a summand of $X^\sharp$.\qed
\end{enumerate}
\end{cor}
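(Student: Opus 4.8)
The plan is to deduce both statements as immediate corollaries of Proposition \ref{thm:flat_sharp_and_sharp_flat}; essentially all the work has already been done there, and what remains is to observe that the relevant object appears as the index-zero term in the direct sum decompositions.

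For part (i), given $X \in \MatFac{S}{d}{f}$, I would take $N = X^\sharp \in \MCM(R^\sharp)$. The first isomorphism of Proposition \ref{thm:flat_sharp_and_sharp_flat} then gives
\[ N^\flat = X^{\sharp\flat} \cong \bigoplus_{k \in \Z_d} T^k(X). \]
Since the shift functor satisfies $T^d = 1_{\MatFac{S}{d}{f}}$, the $k=0$ summand is $T^0(X) = X$, so $X$ is isomorphic to a direct summand of $N^\flat$, as desired.

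For part (ii), given $N \in \MCM(R^\sharp)$, I would take $X = N^\flat \in \MatFac{S}{d}{f}$. The second isomorphism of Proposition \ref{thm:flat_sharp_and_sharp_flat} gives
\[ X^\sharp = N^{\flat\sharp} \cong \bigoplus_{k \in \Z_d} (\sigma^k)^* N, \]
and because $\sigma^d = 1_{R^\sharp}$ the $k=0$ summand is $(\sigma^0)^* N = N$, so $N$ is isomorphic to a direct summand of $X^\sharp$.

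There is essentially no obstacle here: the only point worth making explicit is that $T$ and $\sigma^*$ have order $d$, so that $X$ (respectively $N$) literally occurs as one of the $d$ summands. If one wished to phrase "summand" intrinsically one could invoke the Krull--Remak--Schmidt property recorded in the earlier lemma, but that is not needed for the statement as given.
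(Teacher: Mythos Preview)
Your proposal is correct and is exactly the argument the paper has in mind: the corollary is marked with \qed\ immediately after the statement, indicating it follows directly from Proposition~\ref{thm:flat_sharp_and_sharp_flat} by taking $N = X^\sharp$ for (i) and $X = N^\flat$ for (ii). One tiny remark: you do not actually need $T^d = 1$ or $\sigma^d = 1$ here, only that $T^0 = 1$ and $(\sigma^0)^* = 1$, which is automatic.
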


We can now prove Theorem \ref{thm:finite_dMF_type_2}. The proof is lifted directly from the $d=2$ case. Once again, the characteristic assumption on $\charac \bf k$ is only needed in half of the proof as long as there exists $\mu \in S$ satisfying $\mu^d = -1$.

\begin{proof}[Proof of Theorem \ref{thm:finite_dMF_type_2}]
Let $X_1,X_2,\dots,X_t$ be a representative list of the isomorphism classes of indecomposable $d$-fold matrix factorizations of $f$ and let $N \in \MCM(R^\sharp)$ be indecomposable. Since $N^\flat \in \MatFac{S}{d}{f}$, there exist non-negative integers $s_1,s_2,\dots,s_t$ such that $N^\flat \cong X_1^{s_1}\oplus X_2^{s_2}\oplus \cdots\oplus X_t^{s_t}$. By Proposition \ref{thm:flat_sharp_and_sharp_flat}, $N$ is isomorphic to a direct summand of
\[N^{\flat\sharp} \cong (X_1^\sharp)^{s_1}\oplus (X_2^\sharp)^{s_2} \oplus \cdots \oplus (X_t^\sharp)^{s_t}.\] Since $N$ is indecomposable, KRS in $\MCM(R^\sharp)$ implies that $N$ is isomorphic to a summand of $X_i^\sharp$ for some $1\leq i \leq t$. Hence, every indecomposable MCM $R^\sharp$-module is isomorphic to one appearing in the finite list consisting of all summands of all $X_j^\sharp$, $1\leq j \leq t$. The converse follows similarly from Proposition \ref{thm:flat_sharp_and_sharp_flat} and KRS in $\MatFac{S}{d}{f}$.
\end{proof}

As above, $\sigma: R^\sharp \to R^\sharp$ is the automorphism of $R^\sharp$ which pointwise fixes $S$ and and maps $z$ to $\omega z$. In \cite[Section 4]{tribone_matrix_2021}, it was shown the the category $\MatFac{S}{d}{f}$ is equivalent to the category of finitely generated modules over the skew group algebra $R^\sharp[\sigma]$ which are MCM as $R^\sharp$-modules (equivalently, free as $S$-modules). We denote this category by $\MCM_{\sigma}(R^\sharp)$. The equivalence is given by a pair of inverse functors $\mc A:\MCM_{\sigma}(R^\sharp) \to \MatFac{S}{d}{f}$ and $\mc B: \MatFac{S}{d}{f} \to \MCM_{\sigma}(R^\sharp)$ defined as follows: For $X = (\phi_1:F_2 \to F_1,\dots,\phi_d: F_1\to F_d) \in \MatFac{S}{d}{f}$, $\mc B(X) \coloneqq X^\sharp$ as an $R^\sharp$-module with the action of $\sigma$ given by
\[\sigma\cdot (x_d,x_{d-1},\dots,x_1) = (x_d,\omega x_{d-1},\dots,\omega^{d-1}x_1)\] for all $x_i \in F_i, i \in \Z_d$. For any $N \in \MCM_{\sigma}(R^\sharp)$, $N$ decomposes as an $S$-module into $N \cong \bigoplus_{i \in \Z_d}N^{\omega^i}$ \cite[Lemma 4.2]{tribone_matrix_2021} where $N^{\omega^i}$ is the $\sigma$-invariant subspace 
\(N^{\omega^i} = \left\{ x \in N : \sigma(x) = \omega^ix\right\}.\) For each $i \in \Z_d$, $zN^{\omega^i} \subseteq N^{\omega^{i+1}}$ and therefore multiplication by $z$ defines an $S$-linear map $N^{\omega^i}\to N^{\omega^{i+1}}$. Since $(\mu z)^d = f$, the composition
\[\begin{tikzcd}N^{\omega^{d-1}} \rar{\mu z} &N^{1} \rar{\mu z} &N^{\omega} \rar{\mu z} &\cdots \rar{\mu z} &N^{\omega^{d-2}} \rar{\mu z} &N^{\omega^{d-1}}\end{tikzcd}\] defines a matrix factorization which we denote as $\mc A(N) \in \MatFac{S}{d}{f}$.

To finish this section, we make note of the connection between the functors $\mc A$ and $\mc B$ and the functors $(-)^\sharp$ and $(-)^\flat$.

\begin{lem}\label{thm:A_B_flat_sharp}
Let $H: \MCM_{\sigma}(R^\sharp) \to \MCM(R^\sharp)$ be the functor which forgets the action of $\sigma$ and $G:\MCM(R^\sharp) \to \MCM_{\sigma}(R^\sharp)$ be given by $G(N) = R^\sharp[\sigma] \otimes_{R^\sharp} N$ for any $N \in \MCM(R^\sharp)$.
\begin{enumerate}[label = (\roman*)]
    \item \label{thm:A_B_flat_sharp_1} For any $X \in \MatFac{S}{d}{f}$, $X^\sharp = H\circ\mc B(X)$.
    \item \label{thm:A_B_flat_sharp_2} For any $N \in \MCM(R^\sharp)$, $N^\flat \cong \mc A\circ G(N)$.
\end{enumerate}
\end{lem}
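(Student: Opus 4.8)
The plan is to treat the two parts separately: (i) is a matter of unwinding definitions, while (ii) requires a short bookkeeping computation in the skew group algebra. For (i), observe that by construction $\mc B(X)$ has underlying $R^\sharp$-module equal to $X^\sharp$ (with the prescribed $\sigma$-action on top), and $H$ simply forgets the $\sigma$-action; hence $H(\mc B(X)) = X^\sharp$ on the nose, and likewise on morphisms. Nothing further is needed there.

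For (ii) the idea is to compute $\mc A(G(N))$ explicitly and read off $N^\flat$. Write $R^\sharp[\sigma] = \bigoplus_{j=0}^{d-1} R^\sharp\sigma^j$ with the standard relation $\sigma r = \sigma(r)\sigma$ for $r \in R^\sharp$, and for $n \in N$ and $j \in \Z_d$ set $e_j(n) := \sigma^j \otimes n \in G(N) = R^\sharp[\sigma]\otimes_{R^\sharp} N$. First I would record two identities: the $\sigma$-action on $G(N)$ sends $e_j(n) \mapsto e_{j+1}(n)$, and (using $z\sigma^j = \sigma^j\sigma^{-j}(z) = \omega^{-j}\sigma^j z$ together with the tensor relation) the $z$-action sends $e_j(n) \mapsto \omega^{-j}e_j(zn)$. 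Since $\{1,z,\dots,z^{d-1}\}$ is an $S$-basis of $R^\sharp$, each $e_j(N)$ is a free $S$-submodule and $G(N) = \bigoplus_{j \in \Z_d} e_j(N)$ as an $S$-module, so $G(N)$ is free over $S$ and lies in $\MCM_\sigma(R^\sharp)$. Next I would diagonalize $\sigma$: for $i \in \Z_d$ put $v_i(n) := \sum_{j=0}^{d-1} \omega^{-ij}e_j(n)$. A one-line computation with the first identity gives $\sigma\cdot v_i(n) = \omega^i v_i(n)$, so $v_i(N) \subseteq G(N)^{\omega^i}$; and the inversion $e_j(n) = \tfrac{1}{d}\sum_{i} \omega^{ij}v_i(n)$ (valid because $d$ is a unit in $S$) shows that $n \mapsto v_i(n)$ is an $S$-linear isomorphism $N \xrightarrow{\sim} G(N)^{\omega^i}$ and that $G(N) = \bigoplus_{i\in\Z_d} G(N)^{\omega^i}$ is precisely the eigenspace decomposition used to define $\mc A$. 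Finally, the second identity yields $z\cdot v_i(n) = v_{i+1}(zn)$, so under the isomorphisms $v_i$ every multiplication-by-$z$ map $G(N)^{\omega^i} \to G(N)^{\omega^{i+1}}$ becomes the map $\phi\colon N \to N$ given by multiplication by $z$ on $N$. Hence the matrix factorization $\mc A(G(N))$, whose maps are the multiplications by $\mu z$ between consecutive eigenspaces, is carried by the $v_i$ to $(\mu\phi,\dots,\mu\phi) = N^\flat$; as the $v_i$ are manifestly natural in $N$, this isomorphism is natural, which is (ii).

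The computation in (ii) is routine, and the only place that really needs care is the bookkeeping of the powers of $\omega$ — equivalently, fixing the twisting convention in $R^\sharp[\sigma]$ — so that the $z$-action maps come out as $\phi$ with no leftover root-of-unity factor and so that the eigenspace identifications glue into an honest isomorphism of matrix factorizations rather than a shifted (via $T$) or $\sigma$-twisted version. I expect that to be the only potential pitfall.
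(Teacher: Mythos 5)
Your proposal is correct and follows essentially the same route as the paper: your elements $v_i(n)=\sum_{j}\omega^{-ij}\sigma^j\otimes n$ are exactly $d\,e_i\otimes n$ for the idempotents $e_i=\tfrac{1}{d}\sum_{j}\omega^{-ij}\sigma^j$ that the paper uses, and the identities you verify (the $\sigma$-eigenvector property and $z\cdot v_i(n)=v_{i+1}(zn)$) are precisely the paper's properties (b) and (c). The only real difference is cosmetic: you compute $\mc A(G(N))$ directly from the eigenspace decomposition, whereas the paper first identifies $G(N)\cong\mc B(N^\flat)$ as $R^\sharp[\sigma]$-modules and then invokes the equivalence $\mc A\circ\mc B\cong \mathrm{id}$.
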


\begin{proof}
The first statement follows directly from the definition of $(-)^\sharp$ and $\mc B$. For the second, consider the idempotents
\[e_k = \frac{1}{d} \sum_{j \in \Z_d}\omega^{-jk}\sigma^j \in R^\sharp[\sigma], \quad k \in \Z_d.\] These idempotents have three important properties:
\begin{enumerate}[label = (\alph*)]
    \item \label{equation:e_k_properties_a} $R^\sharp[\sigma] = \bigoplus_{k \in \Z_d}e_kR^\sharp[\sigma]$ as right $R^\sharp[\sigma]$-modules,
    \item \label{equation:e_k_properties_b} $\sigma e_k = e_k \sigma = \omega^k e_k$, $k \in \Z_d$, and
    \item \label{equation:e_k_properties_c} $ze_k = e_{k-1}z$, $k \in \Z_d$.
\end{enumerate}

From \ref{equation:e_k_properties_b}, we have that $e_kR^\sharp[\sigma] = e_kR^\sharp$ where $e_kR^\sharp$ denotes the multiples of $e_k$ by $R^\sharp \cdot 1 \subset R^\sharp[\sigma]$ on the right. Hence, as an $R^\sharp$-module, $e_kR^\sharp[\sigma] = e_kR^\sharp$ is free of rank $1$. Thus, for any $N \in \MCM(R^\sharp)$, \ref{equation:e_k_properties_a} implies that $G(N) \cong \bigoplus_{k \in \Z_d} (e_kR^\sharp \otimes_{R^\sharp}N)$. It then follows from \ref{equation:e_k_properties_b} and \ref{equation:e_k_properties_c} that we have an isomorphism of $R^\sharp[\sigma]$-modules $G(N) \cong \mc B(N^\flat)$. Hence, $\mc A\circ G(N) \cong \mc A \circ \mc B(N^\flat) \cong N^\flat$ by \cite[Theorem 4.4]{tribone_matrix_2021}.
\end{proof}

\begin{remark}
In the case of an Artin algebra $\Lambda$, the relationship between $\Lambda$ and the skew group algebra $\Lambda[G]$ for a finite group $G$ was studied by Reiten and Riedtmann in \cite{reiten_skew_1985}. They show that many representation theoretic properties hold simultaneously for $\Lambda$ and $\Lambda[G]$. In particular, $\Lambda$ has finite representation type if and only if the same is true of $\Lambda[G]$. The equivalence of categories $\MatFac{S}{d}{f} \approx \MCM_{\sigma}(R^\sharp)$ \cite[Theorem 4.4]{tribone_matrix_2021} and Theorem \ref{thm:finite_dMF_type_2} give an analogous relationship between $R^\sharp$ and the skew group algebra $R^\sharp[\sigma]$.
\end{remark}

\section{Hypersurfaces of finite matrix factorization type}

Let $(A,\mf m)$ be a regular local ring and $g \in \mf m^2$ be non-zero. Then the hypersurface ring $A/(g)$ is called a \textit{simple hypersurface singularity} if there are only finitely many proper ideals $I \subset A$ such that $g \in I^2$. In the case that $A$ is a power series ring over an algebraically closed field of characteristic zero, the pair of papers \cite{buchweitz_cohen-macaulay_1987} and \cite{knorrer_cohen-macaulay_1987} prove the following theorem.

\begin{thm}[\cite{buchweitz_cohen-macaulay_1987},\cite{knorrer_cohen-macaulay_1987}]\label{thm:BGS_Knorrer}
Let $\mathbf{k}$ be an algebraically closed field of characteristic zero and let $R= \mathbf{k}\llbracket x_1,x_2,\dots,x_r\rrbracket/(g)$, where $g \in (x_1,x_2,\dots,x_r)^2$ is non-zero. Then $R$ has finite CM type if and only if $R$ is a simple hypersurface singularity.
\end{thm}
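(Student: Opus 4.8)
This is the Buchweitz--Greuel--Schreyer/Kn\"orrer classification of hypersurface singularities of finite CM type, and the plan is to establish the two implications separately after first cutting the number of variables down to something manageable. For the reduction I would apply the splitting lemma to write $g$, in suitable coordinates, as $g_0(y_1,\dots,y_c) + q(y_{c+1},\dots,y_r)$, where $q$ is a non-degenerate quadratic form, $c = \operatorname{corank}(g)$ is as small as possible, and $g_0 \in (y_1,\dots,y_c)^3$. Two applications of Theorem~\ref{thm:knorrer_4_cats} show that finite CM type is preserved under $g \rightsquigarrow g + u^2 + v^2$, and likewise for the property of being a \emph{simple hypersurface singularity} (cf.\ \cite{knorrer_cohen-macaulay_1987}); so we may peel $q$ off two squares at a time and reduce to $r \le c + 1 \le 3$. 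The cases $c = 0$ (a regular ring, or $A_1$) and $c = 1$ (the ring $\mathbf{k}\llbracket y \rrbracket/(y^n)$, possibly with one extra square, i.e.\ type $A_{n-1}$) are then immediate: each is always of finite CM type --- the indecomposables in the $c = 1$ case being the $n-1$ modules $\mathbf{k}\llbracket y \rrbracket/(y^i)$ --- and each is always simple. Everything therefore comes down to corank-$2$ germs.

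For the implication that a simple germ has finite CM type it then remains to treat the ADE germs in two or three variables. For the (Kleinian) surface singularities I would invoke the algebraic McKay correspondence: these are exactly the invariant rings $\mathbf{k}\llbracket a,b \rrbracket^{\Gamma}$ for finite $\Gamma \le SL(2,\mathbf{k})$, and by Auslander's theorem the indecomposable MCM modules over such a ring are in bijection with $\Irr(\Gamma)$, hence finite in number. For the ADE plane curves $A_n = x^2 + y^{n+1}$, $D_n = x^2 y + y^{n-1}$, $E_6 = x^3 + y^4$, $E_7 = x^3 + x y^3$, $E_8 = x^3 + y^5$ one instead writes down the classical finite list of matrix factorizations and checks directly that it is complete and irredundant --- feasible because these rings have multiplicity at most $3$ (and one can bookkeep it via the normalization and conductor). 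Either way, every simple germ has finite CM type.

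The converse --- finite CM type forces $g$ to be simple --- is the substantive direction, and I would argue contrapositively: assuming $g$ is not simple, produce infinitely many pairwise non-isomorphic indecomposable MCM $R$-modules. First reduce to the isolated-singularity case, since finite CM type forces $R$ to have an isolated singularity (a theorem of Auslander), while a non-isolated hypersurface is certainly non-simple. After the variable reduction above, non-simplicity of an isolated $g$ forces either $\operatorname{corank}(g) \ge 3$, or a corank-$2$ germ of multiplicity $\ge 4$, or a corank-$2$ germ with a degenerate triple point; by Arnol'd's adjacency hierarchy any such $g$ is adjacent to, or is a member of, one of a short list of minimal non-simple germs: the simple-elliptic (parabolic) surface singularities $\tilde E_6 = x^3 + y^3 + z^3$, $\tilde E_7 = x^4 + y^4 + z^2$, $\tilde E_8 = x^6 + y^3 + z^2$, the hyperbolic germs $T_{p,q,r}$, and corank-$2$ curve germs such as several concurrent lines or mutually tangent branches. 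Each of these has infinite CM type by an explicit one-parameter family --- in the surface cases the germ is a cone over an elliptic curve $E$ (or is minimally resolved by one), and pulling back the infinitely many points of $\mathrm{Pic}^0(E)$ yields non-isomorphic indecomposable MCM modules; in the curve cases a family of fractional ideals distinguished by the cross-ratio of tangent directions does the same. To pass from the base cases to an arbitrary non-simple $g$ one finally uses the inheritance principle --- the heart of \cite{buchweitz_cohen-macaulay_1987} --- that a hypersurface adjacent to one of infinite CM type is again of infinite CM type, i.e.\ that a non-constant family of matrix factorizations can be followed from the generic fibre of a deformation to the special fibre. This last step, together with the Arnol'd case analysis feeding it, is where I expect the real difficulty to lie; by contrast the reduction by periodicity, the low-corank cases, the McKay input, and the explicit families over the elliptic cones are comparatively routine.
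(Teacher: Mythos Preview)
The paper does not prove Theorem~\ref{thm:BGS_Knorrer}. It is stated with attributions to \cite{buchweitz_cohen-macaulay_1987} and \cite{knorrer_cohen-macaulay_1987} and then used as a black box, most notably in the proof of Theorem~\ref{thm:finite_dMF_polys}; no argument for it appears anywhere in the paper. So there is no ``paper's own proof'' to compare your proposal against.

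That said, your outline is a faithful sketch of how the original references establish the result: the splitting-lemma reduction and Kn\"orrer periodicity to cut down to corank $\le 2$; the McKay/Auslander argument (or explicit matrix-factorization lists) for the forward direction on the ADE germs; and, for the converse, Arnol'd's adjacency hierarchy together with the Buchweitz--Greuel--Schreyer deformation principle that infinite CM type propagates along adjacency from the simple-elliptic and $T_{p,q,r}$ base cases. Your identification of the deformation/semicontinuity step as the genuinely hard part is accurate. One minor point: you do not need to peel off squares two at a time --- Theorem~\ref{thm:knorrer_4_cats} already gives the equivalence of finite CM type for $g$ and $g+z^2$ one square at a time, so a single application per variable suffices; the ``two squares'' formulation is only needed if you want the stronger categorical equivalence rather than just equivalence of finiteness.
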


Essential to their conclusion is the classification of simple hypersurface singularities, due to Arnol'd \cite{arnold_normal_1973}, which gives explicit normal forms for all polynomials defining such a singularity. These are often referred to as the ADE singularities. The culmination of these results is a complete list of polynomials which define hypersurface rings of finite CM type in all dimensions (see \cite[Theorem 8.8]{yoshino_cohen-macaulay_1990} or \cite[Theorem 9.8]{leuschke_cohen-macaulay_2012}). Equivalently, the polynomials in this list are precisely the ones with only finitely many indecomposable $2$-fold matrix factorizations up to isomorphism.

Using Theorem \ref{thm:finite_dMF_type_2} and the classification described above, we are able to compile a list of all $f$ with finite $d$-MF type for $d>2$.

\begin{thm}\label{thm:finite_dMF_polys}
Let $\bf k$ be an algebraically closed field of characteristic zero and $S=\mathbf{k}\llbracket y,x_2,\dots,x_r\rrbracket$. Assume $0\neq f \in (y,x_2,\dots,x_r)^2$ and $d>2$. Then $f$ has finite $d$-\textup{MF} type if and only if, after a possible change of variables, $f$ and $d$ are one of the following:
\begin{enumerate}
    \item[$(A_1)$:] \quad $y^2 + x_2^2 + \cdots + x_r^2$ for any $d>2$
    \item[$(A_2)$:] \quad $y^3 + x_2^2 + \cdots + x_r^2$ for $d=3,4,5$
    \item[$(A_3)$:] \quad $y^4 + x_2^2 + \cdots + x_r^2$ for $d=3$
    \item[$(A_4)$:] \quad $y^5 + x_2^2 + \cdots + x_r^2$ for $d=3$.
\end{enumerate}
\end{thm}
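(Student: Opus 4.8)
The plan is to reduce the classification of $f$ with finite $d$-MF type ($d > 2$) to the already-known classification of hypersurface singularities of finite CM type, using Theorem~\ref{thm:finite_dMF_type_2}. That theorem says $f$ has finite $d$-MF type if and only if $R^\sharp = S\llbracket z\rrbracket/(f + z^d)$ has finite CM type; and by Theorem~\ref{thm:BGS_Knorrer} the latter holds if and only if $f + z^d$ (viewed as an element of $\mathbf{k}\llbracket y, x_2, \dots, x_r, z\rrbracket$) defines a simple hypersurface singularity, i.e. appears on Arnol'd's ADE list. So the entire problem becomes: \emph{for which $f \in (y, x_2, \dots, x_r)^2$ and which $d > 2$ is $f + z^d$ an ADE polynomial?}

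First I would recall the ADE normal forms in $n$ variables: $A_k$ is $u_0^{k+1} + u_1^2 + \cdots$ ($k \ge 1$), $D_k$ is $u_0^2 u_1 + u_1^{k-1} + u_2^2 + \cdots$ ($k \ge 4$), $E_6 = u_0^3 + u_1^4 + u_2^2 + \cdots$, $E_7 = u_0^3 + u_0 u_1^3 + u_2^2 + \cdots$, $E_8 = u_0^3 + u_1^5 + u_2^2 + \cdots$. The key structural observation is that in $f + z^d$ the variable $z$ appears \emph{only} in the pure power $z^d$ with $d \ge 3$, and $z$ does not appear in $f$ at all. I would argue that in any ADE normal form, after a change of coordinates the variable $z$ must be identifiable with one of the $u_i$'s, and a pure-power term $u_i^e$ with $e \ge 3$ and $u_i$ occurring in no other monomial forces $u_i$ to be the ``special'' variable $u_0$ of type $A$, $D$, or $E$ — because in every normal form all variables except $u_0$ (and except $u_1$ in type $D$) occur as a bare square, hence $e = 2$; while $u_0$ occurs with exponent $\ge 3$ in types $A_{\ge 2}$, $D$, $E$, and $u_1$ occurs with exponent $\ge 3$ in $D_{\ge 4}$ only together with the mixed term $u_0^2 u_1$ or $u_0 u_1^3$, which would reintroduce another variable into $z$'s monomial. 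This pins down $z^d = u_0^{k+1}$ in type $A_k$ (so $d = k+1$, and $f = u_1^2 + \cdots + u_{n-1}^2$ is a nondegenerate quadratic in the remaining $n - 1 = r$ variables, i.e. the $(A_1)$ case in those variables), or $z^d = u_0^3$ in types $E_6, E_7, E_8$ and $z^d$ the $u_0^3$ or — impossible by the mixed-term argument — in type $D$.

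With that reduction in hand, I would go through the short list of remaining possibilities. In type $A_{d-1}$: $f$ is a nondegenerate quadratic form in $r$ variables over $\mathbf k$, so after a change of variables $f = y^2 + x_2^2 + \cdots + x_r^2$, giving case $(A_1)$ for every $d > 2$. In type $E$ we need $d = 3$ and $f$ equal to the ``rest'' of an $E$-polynomial in the variables $y, x_2, \dots, x_r$: $f = y^4 + (\text{quadratic})$ gives $E_6$, hence case $(A_3)$; $f = y \cdot y'^3 + (\text{quadratic})$ gives $E_7$; $f = y^5 + (\text{quadratic})$ gives $E_8$, hence case $(A_4)$. Here I must also handle type $A$ with $z^d$ playing the role of the quadratic slot rather than $u_0^{k+1}$: but $d > 2$ rules that out. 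And I should double-check the small-$k$ coincidences: $A_1 \colon z^d$ with $d = 2$ is excluded; for $A_2$ (so $d = 3$) we can \emph{also} realize $f$ as $y^3 + \text{quadratic}$, matching case $(A_2)$ with $d = 3$; for $A_3$ ($d = 4$), $f = y^3 + \cdots$? No — with $d = 4$ the type-$A$ reading forces $f$ quadratic; but $f = y^3 + x_2^2 + \cdots$ with $d = 4$ gives $f + z^4 = y^3 + z^4 + \cdots$, which \emph{is} $E_6$ (reading $z$ as $u_1$, $y$ as $u_0$)! So the E-cases must be scanned with \emph{either} $y$ or $z$ as the distinguished variable. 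Recollecting: $y^3 + z^4$ is $E_6$ (so $f = y^3 + \text{quad}$, $d = 4$ works, giving $(A_2)$ with $d=4$), $y^3 + z^5$ is $E_8$ ($f = y^3 + \text{quad}$, $d = 5$, giving $(A_2)$ with $d=5$), $y^3 + yz^3$ would be $E_7$ but $yz^3$ is not of the form $f + z^d$ since the $z$-term is not pure; $y^4 + z^3$ is $E_6$ ($f = y^4 + \text{quad}$, $d = 3$, giving $(A_3)$), $y^5 + z^3$ is $E_8$ ($f = y^5 + \text{quad}$, $d = 3$, giving $(A_4)$). Assembling all surviving $(f,d)$ pairs yields exactly the four families listed.

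The main obstacle — and the step requiring the most care — is the normal-form bookkeeping: being sure the list is \emph{exhaustive}, i.e. that no ADE polynomial of the form $f + z^d$ with $d > 2$ has been missed, and handling the low-rank/small-exponent coincidences (e.g. $A_3 = D_3$, the fact that $E_6$ and $E_8$ each admit two presentations $y^3 + z^4$ vs.\ $y^4 + z^3$ and $y^3 + z^5$ vs.\ $y^5 + z^3$, and that $D_k$ is genuinely excluded because its distinguished monomials are mixed). Concretely I would phrase this as: by Theorem~\ref{thm:finite_dMF_type_2} and Theorem~\ref{thm:BGS_Knorrer}, $f$ has finite $d$-MF type iff $f + z^d$ is, up to change of variables, one of the polynomials on the ADE list; then enumerate which ADE polynomials can be written $f + z^d$ with $f \in (y, x_2, \dots, x_r)^2$ nonzero and $d > 2$; the quadratic-slot analysis forces $f + z^d \in \{A_{d-1}, E_6 \text{ (as } y^3+z^4 \text{ or } y^4+z^3), E_8 \text{ (as } y^3+z^5 \text{ or } y^5 + z^3)\}$, and translating back gives precisely families $(A_1)$–$(A_4)$. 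I would also note that $D_k$ and $E_7$ never arise, since each of their defining equations contains a monomial in which $z$ (the would-be distinguished variable) is multiplied by another variable, contradicting that $z$ occurs in $f + z^d$ only through the pure power $z^d$.
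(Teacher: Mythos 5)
Your opening reduction --- combining Theorem \ref{thm:finite_dMF_type_2} with Theorem \ref{thm:BGS_Knorrer} so that the problem becomes ``for which $(f,d)$ with $d>2$ is $f+z^d$ a simple (ADE) singularity'' --- is exactly how the paper begins, but your normal-form analysis after that point contains a genuine error. You assert that type $D$ never arises because its distinguished monomial $u_0^2u_1$ is mixed; this is false. The pair $f=y^3+x_2^2+\cdots+x_r^2$, $d=3$ gives $f+z^3=y^3+z^3+x_2^2+\cdots+x_r^2$, and $y^3+z^3$ is a product of three distinct linear forms, hence equivalent to $z(y^2+z^2)$, i.e.\ to $D_4$ --- an identification the paper itself uses in Example \ref{example:phi_abc}. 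It is certainly not $A_2$ (a cusp: irreducible, Milnor number $2$ rather than $4$), so your inclusion of ``$(A_2)$ with $d=3$'' rests on a misidentification that happens to land on a correct entry while contradicting your own exclusion of type $D$; applied consistently, your argument would delete that entry and produce the wrong list. The underlying problem is that ``$z$ occurs only through the pure power $z^d$, hence $z$ must be identifiable with one of the normal-form variables $u_i$'' is not a legitimate inference: equivalence of singularities permits arbitrary coordinate changes, which need not preserve the coordinate $z$ or the monomial support of the normal form. To make a direct comparison with Arnol'd's list rigorous you would need actual invariants (multiplicity, tangent cone, Milnor number, weights), both to pin down which normal form $y^k+z^d$ matches and to establish exhaustiveness (e.g.\ that $y^3+z^6$ is not simple).

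The paper avoids this bookkeeping entirely. After the same initial reduction it splits into the cases $\dim R^\sharp=1$ and $\dim R^\sharp\ge 2$. In dimension one it normalizes $f=y^k$ and invokes the bound $\ord(y^k+z^d)\le 3$ from \cite[8.2.1]{yoshino_cohen-macaulay_1990} to force $\min(k,d)\le 3$, then reads off the admissible pairs from the one-dimensional simple-singularity list (which is where $D_4=y^3+z^3$ legitimately enters). In dimension $\ge 2$ it uses $\ord(f+z^d)\le 2$ to conclude $\ord(f)=2$, applies the Weierstrass Preparation Theorem to write $f=(g+x_r^2)u$, and then strips off the square using Kn\"orrer's theorem (Theorem \ref{thm:knorrer_4_cats}) together with Theorem \ref{thm:finite_dMF_type_2}, inducting down to the one-variable case. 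To repair your argument you should either adopt this reduction or supply the invariant-theoretic justifications above, and in either case restore type $D_4$ to the analysis.
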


\begin{proof}
Let $f$ and $d$ be a pair in the list given. Then $f+z^d \in S\llbracket z \rrbracket$ defines a simple hypersurface singularity and therefore $R^\sharp$ has finite CM type by Theorem \ref{thm:BGS_Knorrer}. By Theorem \ref{thm:finite_dMF_type_2}, $f$ has finite $d$-MF type. 

Conversely, let $0 \neq f \in (y,x_2,\dots,x_r)^2$, $d > 2$, and assume $f$ has finite $d$-MF type. Then $R^\sharp = S\llbracket z \rrbracket/(f+z^d)$ has dimension $r$ and is of finite CM type by Theorem \ref{thm:finite_dMF_type_2}. We consider two cases.

First, assume $\dim R^\sharp = 1$, that is, assume $S=\mathbf{k}\llbracket y \rrbracket$. Then $f = uy^k$ for some unit $u \in S$ and $k \ge 2$. Since $S$ is complete and $\charac \mathbf{k}=0$, there exists a $k$th root $v$ of $u^{-1}$ in $S$ \cite[A.31]{leuschke_cohen-macaulay_2012}. Therefore, after replacing $y$ with $vy$, we may assume that $f = y^k$. Since $\dim R^\sharp = 1$, \cite[8.2.1]{yoshino_cohen-macaulay_1990} implies that $\ord(y^k+z^d) \leq 3$. Hence, either $k \leq 3$ or $d \leq 3$. If $k=2$, there are no restrictions on $d$ since $y^2+z^d$ defines a simple ($A_{d-1}$) singularity for all $d > 2$. If $k=3$, then the fact that $y^3 + z^d$ defines a $1$-dimensional simple hypersurface singularity implies that $d=3,4,$ or $5$. Similarly, if $d=3$, then $k=2,3,4,$ or $5$.

Next, assume $\dim R^\sharp \ge 2$. In this case, \cite[8.2.2]{yoshino_cohen-macaulay_1990} implies that $\ord(f+z^d) \leq 2$. Since $d >2$ and $f \in (y,x_2,\dots,x_r)^2$, we have that $\ord(f)= 2$. By the Weierstrass Preparation Theorem \cite[Corollary 9.6]{leuschke_cohen-macaulay_2012}, there exists a unit $u \in S$ and $g \in \mathbf{k}\llbracket y,x_2,\dots,x_{r-1}\rrbracket$ such that $f = (g + x_r^2)u$.  As above, we may neglect the unit and assume that $f = g + x_{r}^2$ for some $g \in \mathbf{k}\llbracket y , x_2,\dots,x_{r-1}\rrbracket$.

Since $f+z^d = g + x_r^2 + z^d$ has finite $2$-MF type, Kn\"orrer's theorem (Theorem \ref{thm:knorrer_4_cats}) implies that $g + z^d$ has finite $2$-MF type as well. Thus, $g$ has finite $d$-MF type by Theorem \ref{thm:finite_dMF_type_2}. We repeat this argument until $f = g' + x_2^2 + \dots + x_r^2$ for some $g' \in \mathbf{k}\llbracket y \rrbracket$ with finite $d$-MF type. Finally, we apply the first case to $g'$ to finish the proof.
\end{proof}

\begin{cor}
Let $\bf k$ be an algebraically closed field of characteristic zero, $S=\mathbf{k}\llbracket y,x_2,\dots,x_r\rrbracket$, and $f \in (y,x_2,\dots,x_r)^2$ be non-zero. If $f$ has finite $d$-MF type for some $d\ge 2$, then $R=S/(f)$ is an isolated singularity, that is, $R_{\mf p}$ is a regular local ring for all non-maximal prime ideals $\mf p$.
\end{cor}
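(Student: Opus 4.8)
The plan is to transfer the statement to the branched cover and then compare singular loci. By Theorem~\ref{thm:finite_dMF_type_2}, the hypothesis that $f$ has finite $d$-MF type for some $d \ge 2$ is equivalent to $R^\sharp = S\llbracket z \rrbracket/(f+z^d)$ having finite CM type. I would then invoke the theorem of Auslander that a complete local ring of finite CM type has an isolated singularity (see, e.g., \cite[Chapter~7]{leuschke_cohen-macaulay_2012}), so that $R^\sharp$ is an isolated singularity; what remains is a purely commutative-algebra descent from $R^\sharp$ to $R$.

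For that descent I would apply the Jacobian criterion, which is available since $\mathbf k$ is perfect. Let $J \subseteq S$ be the ideal generated by $f$ together with all of its partial derivatives, so that $R = S/(f)$ is an isolated singularity precisely when $\mf n$ is the only prime of $S$ containing $J$. The non-regular locus of $R^\sharp$, viewed in $\spec S\llbracket z \rrbracket$, is cut out by $f+z^d$ together with all of its partial derivatives, namely $f+z^d$, the partials of $f$, and $\partial_z(f+z^d) = d z^{d-1}$. The point is that $d$ is a unit in $S$ (indeed $\charac \mathbf k = 0$), so $\sqrt{(z^{d-1})} = (z)$ and this ideal therefore has the same radical as $(f+z^d,\, z) + (\text{partials of } f) = (f,\, z) + (\text{partials of } f) = J\,S\llbracket z \rrbracket + (z)$. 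Since $\dim R^\sharp = \dim S \ge 1$ and $R^\sharp$ is an isolated singularity, this forces $\sqrt{J\,S\llbracket z \rrbracket + (z)}$ to be the maximal ideal of $S\llbracket z \rrbracket$.

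Finally I would descend: given a prime $\mf p \in \spec S$ with $J \subseteq \mf p$, the ideal $\mf p\,S\llbracket z \rrbracket + (z)$ is prime in $S\llbracket z \rrbracket$ and contains $J\,S\llbracket z \rrbracket + (z)$, hence contains its radical, which is maximal; so $\mf p\,S\llbracket z \rrbracket + (z)$ is the maximal ideal and therefore $\mf p = \mf n$. Thus $\mf n$ is the only prime of $S$ containing $J$, i.e. $R_{\mf p}$ is regular for all non-maximal $\mf p \in \spec R$. I do not expect a genuine obstacle here: once Theorem~\ref{thm:finite_dMF_type_2} and Auslander's theorem are granted, the whole content is the radical computation of the second paragraph, and its only subtlety is the (once again) essential use of $\charac \mathbf k \nmid d$, which is exactly what makes $\partial_z(f+z^d)$ a unit multiple of $z^{d-1}$.
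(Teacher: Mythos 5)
Your proof is correct, but it follows a genuinely different route from the paper's. The paper disposes of the corollary in one line: by Theorem~\ref{thm:finite_dMF_polys} (together with the $d=2$ classification), any $f$ of finite $d$-MF type appears, after a change of variables, on the known list of polynomials of finite CM type in \cite[Theorem 8.8]{yoshino_cohen-macaulay_1990}, all of which visibly define isolated singularities. That argument is short but leans on the full classification machinery (Arnol'd's normal forms via Theorem~\ref{thm:BGS_Knorrer}). You instead pass to $R^\sharp$ via Theorem~\ref{thm:finite_dMF_type_2}, invoke Auslander's theorem that finite CM type forces an isolated singularity, and then descend by comparing Jacobian ideals: since $\charac\mathbf{k}=0$ makes $\partial_z(f+z^d)$ a unit times $z^{d-1}$, the singular locus of $R^\sharp$ has the same radical as $J\,S\llbracket z\rrbracket+(z)$, and the prime $\mf p\,S\llbracket z\rrbracket+(z)$ argument correctly pulls the conclusion back to $S$ (note $R^\sharp$ is genuinely singular since $f+z^d\in(\mf n+(z))^2$, so the radical is the maximal ideal rather than the unit ideal, as your dimension remark implicitly uses). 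Your approach buys independence from the classification — it needs only Theorem~\ref{thm:finite_dMF_type_2}, Auslander's theorem, and the Jacobian criterion for complete local rings in characteristic zero (the one mildly nontrivial input you should cite, e.g. as in \cite[Chapter 8]{yoshino_cohen-macaulay_1990}) — and it isolates a clean general fact: the singular locus of $R$ is contained in the image of that of $R^\sharp$ under $z\mapsto 0$. The paper's approach buys brevity, since the list is already in hand from the preceding theorem.
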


\begin{proof}
The polynomials listed in Theorem \ref{thm:finite_dMF_polys} are a subset of the ones in \cite[Theorem 8.8]{yoshino_cohen-macaulay_1990} (or \cite[Theorem 9.8]{leuschke_cohen-macaulay_2012}), all of which define isolated singularities. 
\end{proof}

\setlength{\abovedisplayskip}{0pt}
\setlength{\belowdisplayskip}{0pt}
\setlength{\abovedisplayshortskip}{0pt}
\setlength{\belowdisplayshortskip}{0pt}

Suppose we have a pair $f$ and $d$ from the list in Theorem \ref{thm:finite_dMF_polys} such that $R^\sharp$ has dimension $1$. Then \cite[Chapter 9]{yoshino_cohen-macaulay_1990} gives matrix factorizations for every indecomposable MCM $R^\sharp$-module. By computing multiplication by $z$ on each of these $R^\sharp$-modules, we can compile a representative list of all isomorphism classes of indecomposable $d$-fold factorizations of $f$. We give one such computation in the following example.

\begin{example}\label{example:E_6}
Let $\mathbf k$ be algebraically closed of characteristic zero. Let $S=\mathbf{k}\llbracket y \rrbracket$, $f=y^4 \in S$, and $R = S/(f)$. The hypersurface ring $R^\sharp = \mathbf{k}\llbracket x,y\rrbracket/(y^4+x^3)$ is a simple curve singularity of type $E_6$ and has finite CM type. Here we are viewing $R^\sharp$ as the $3$-fold branched cover of $R$. By Theorem \ref{thm:finite_dMF_type_2}, the category $\MatFac{S}{3}{y^4}$ has only finitely many non-isomorphic indecomposable objects. We give a complete list below.

A complete list of non-isomorphic indecomposable MCM $R^\sharp$-modules is given in \cite[9.13]{yoshino_cohen-macaulay_1990}. By Corollary \ref{thm:cor_knorrer}, we may compute multiplication by $x$ on each of these modules to obtain a representative from each isomorphism class of indecomposable matrix factorizations of $y^4$ with $3$ factors. By Remark \ref{remark:mu}, we may choose $\mu = -1$. 

Following the notation of \cite[9.13]{yoshino_cohen-macaulay_1990}, we let $\phi_1 = \begin{pmatrix}
x &y\\
y^3 &-x^2
\end{pmatrix}$ and $M_1 = \cok \phi_1$. Let $e_1$ and $e_2$ in $M_1$ denote the images of the standard basis on $S\llbracket x \rrbracket^2$. Then $e_1$ and $e_2$ satisfy $xe_1 = - y^3e_2$ and $x^2e_2 = ye_1$. As an $S$-module, $M_1$ is free with basis $\{e_1,e_2,xe_2\}$. Multiplication by $x$ on $M_1$ is therefore given by
\[ \phi = \begin{pmatrix}
    0 & 0 & y\\ -y^3 & 0 & 0 \\ 0 & 1 & 0
    \end{pmatrix}. \]
    Hence, $M_1^\flat = (-\phi,-\phi,-\phi) \in \MatFac{S}{3}{y^4}$. Furthermore, we have a commutative diagram
    
    \[\begin{tikzcd}[ampersand replacement = \&, column sep = 5em,row sep = 3em]
    S^3 \dar[swap]{\left(\begin{smallmatrix} 0 & 1 & 0 \\ -1 & 0 & 0 \\ 0 & 0 & -1\\\end{smallmatrix}\right)} \rar{-\phi} \&S^3 \dar[swap]{\left(\begin{smallmatrix} 0 & 0 & -1 \\ 0 & -1 & 0 \\ 1 & 0 & 0\\\end{smallmatrix}\right)} \rar{-\phi} \&S^3 \dar{\left(\begin{smallmatrix} 1 & 0 & 0 \\ 0 & 0 & 1\\ 0 & 1 & 0\\\end{smallmatrix}\right)} \rar{-\phi} \&S^3 \dar{\left(\begin{smallmatrix} 0 & 1 & 0 \\ -1 & 0 & 0 \\ 0 & 0 & -1 \\\end{smallmatrix}\right)}\\
    S^3 \rar[swap]{\left(\begin{smallmatrix}1 & 0 & 0 \\ 0 & y^3 & 0\\ 0 & 0 & y \end{smallmatrix}\right)} \&S^3 \rar[swap]{\left(\begin{smallmatrix}y & 0 & 0 \\ 0 & 1 & 0\\ 0 & 0 & y^3 \end{smallmatrix}\right)} \&S^3 \rar[swap]{\left(\begin{smallmatrix}y^3 & 0 & 0 \\ 0 & y & 0\\ 0 & 0 & 1 \end{smallmatrix}\right)} \&S^3.
    \end{tikzcd}\] 
    Thus, $M_1^\flat$ is isomorphic to the direct sum of the indecomposable factorization $X_{\phi_1}\coloneqq (y^3,y,1)$ and its corresponding shifts, that is, $M_1^\flat \cong \bigoplus_{i \in \Z_3}T^i(y^3,y,1)$.
    
    Similarly, multiplication by $x$ can be computed for each of the indecomposable MCM $R^\sharp$ modules listed in \cite[9.13]{yoshino_cohen-macaulay_1990}. From this computation, we obtain a list of $3$-fold matrix factorizations of $y^4$ given in the table below.

\begin{figure}
    \centering
    {\renewcommand{\arraystretch}{1.4}
    \begin{tabular}{lc}
    \toprule
    $X \in \MatFac{S}{3}{y^4}$ & $N^\flat$ for $N \in \MCM(R^\sharp)$\\
    \midrule
    $\mc P_1 = (y^4,1,1)$ & $(R^\sharp)^\flat \cong \bigoplus_{i \in \Z_3}T^i(\mc P_1)$\\
    $X_{\phi_1}= (y^3,y,1)$  & $M_1^\flat \cong \bigoplus_{i \in \Z_3}T^i(X_{\phi_1})$\\
    $X_{\psi_1}= (y^3,1,y)$  & $N_1^\flat \cong \bigoplus_{i \in \Z_3}T^i(X_{\psi_1})$\\
    $X_{\phi_2} = (y^2,y^2,1)$ & $M_2^\flat \cong N_2^\flat \cong \bigoplus_{i \in \Z_3}T^i(X_{\phi_2})$\\
    $X_{\beta} = (y^2,y,y)$ & $B^\flat \cong \bigoplus_{i \in \Z_3}T^i(X_{\beta})$\\
    $X_{\alpha} = \left(\begin{pmatrix}
        0&-y^2\\ 1&-y
        \end{pmatrix}, \begin{pmatrix}
        0&-y^3\\ 1&-y^2
        \end{pmatrix},\begin{pmatrix}
        0&-y^3\\ 1&-y
        \end{pmatrix}\right)$ & $A^\flat \cong \bigoplus_{i \in \Z_3}T^i(X_{\alpha})$\\
       $X_{\xi} =\left(\begin{pmatrix}
        y&0\\0&y^3
        \end{pmatrix}, \begin{pmatrix}
        0&y\\ y&1
        \end{pmatrix},\begin{pmatrix}
        -y&1\\y^2&0
        \end{pmatrix}\right)$ & $X^\flat \cong \bigoplus_{i \in \Z_3}T^i(X_{\xi}).$\\
         \bottomrule
\end{tabular}
 }
    \label{fig:1}
\end{figure}

\vspace{10pt}
    
The factorizations $\mc P_1, X_{\phi_1}, X_{\psi_1}, X_{\phi_2},$ and $X_{\beta}$ are each indecomposable since they are of size one. By \cite[Corollary 5.15]{tribone_matrix_2021}, the syzygy (and cosyzygy) of an indecomposable reduced matrix factorization is again indecomposable. Here a reduced matrix factorization means all the entries of all the matrices lie in the maximal ideal of $S$ (see Section \ref{section:reduced_dMFs}). Using \cite[2.11]{tribone_matrix_2021}, we have that $\Omega_{\MatFac{S}{3}{y^4}}^-(X_\beta) \cong X_\alpha$ and therefore, $X_\alpha$ is indecomposable.

Since $X_\xi$ is of size $2$, a non-trivial decomposition would be of the form $(y,b,c) \oplus (y^3,b',c')$ for some $b,c,b',c' \in S$. Since
$\det \left( \begin{smallmatrix} 0 & y \\ y & 1 \end{smallmatrix}\right)=-y^2$, the possibilities for $b$ and $b'$ are, up to units, $b=y^2$ and $b' = 1$ or $b=y=b'$. By considering cokernels, both cases lead to contradictions and so $X_\xi$ must be indecomposable.

By Proposition \ref{thm:flat_sharp_and_sharp_flat}, these seven factorizations, and each of their corresponding shifts, give the complete list of non-isomorphic indecomposable objects in $\MatFac{S}{3}{y^4}$ (21 in total).
\end{example}

To end this section, we discuss the relationship between the $2$-MF type of $f$ and the $d$-MF type of $f$ for $d > 2$. In one direction, we have the following consequence of Theorem \ref{thm:B}.

\begin{cor}\label{thm:finite_type_descends} Let $S$ be a regular local ring, $f$ a non-zero non-unit in $S$, and $d \ge 2$. If $f$ has finite $d$-MF type, then $f$ has finite $k$-MF type for all $2\leq k \leq d$. In particular, if $f$ has finite $d$-MF type for some $d\ge 2$, then $R=S/(f)$ has finite CM type.
\end{cor}

In general, the converse of Lemma \ref{thm:finite_type_descends} does not hold. The example below gives a polynomial of finite $2$-MF type but of infinite $3$-MF type.

\begin{example}\label{example:phi_abc}
Let $S=\mathbf{k}\llbracket x,y\rrbracket$ for an algebraically closed field with characteristic $\charac\mathbf{k}\neq 2,3$ and let $f = x^3 + y^3 \in S$. The hypersurface ring $R=S/(f)$ is a simple singularity of type $D_4$ and therefore has finite CM type.

Consider $R^\sharp = \mathbf{k}\llbracket x,y,z\rrbracket/(x^3+y^3+z^3)$, the $3$-fold branched cover of $R$. Following \cite{buchweitz_moore_2015}, to each point $(a,b,c)\in \mathbf{k}^3$ satisfying $a^3+b^3+c^3 = 0$ and $abc\neq 0$, we associate the \textit{Moore matrix}
\[X_{abc} = \begin{pmatrix}
ax &bz &cy  \\
by &cx &az \\
cz &ay &bx
\end{pmatrix}.\]
The $R^\sharp$-module $N_{abc} \coloneqq\cok(X_{abc})$ is MCM and is given by the matrix factorization $(X_{abc}, \frac{1}{abc}\text{adj}X_{abc}) \in \MatFac{S\llbracket z\rrbracket}{2}{x^3 + y^3 + z^3}$, where $\text{adj}X_{abc}$ is the classical adjoint of $X_{abc}$. Furthermore, $N_{abc}$ is indecomposable since $\det X_{abc}=abc(x^3+y^3+z^3)$ and $x^3+y^3+z^3$ is irreducible. Buchweitz and Pavlov give precise conditions for $X_{abc}$ to be matrix equivalent to $X_{a'b'c'}$ (see \cite[Proposition 2.13]{buchweitz_moore_2015}). In particular, their results imply that the collection $\{N_{abc}\}$, as $(a,b,c)$ varies over the curve $x^3+y^3+z^3$, gives an infinite collection of non-isomorphic indecomposable MCM $R^\sharp$-modules. It now follows from Theorem \ref{thm:finite_dMF_type_2} that $x^3 + y^3$ has infinite $3$-MF type.

With respect to the images of the standard basis on $S\llbracket z \rrbracket^3$, multiplication by $z$ on $N_{abc}$ is given by the $S$-matrix
\[\phi_{abc} = \begin{pmatrix}
0 &-\frac{c}{a}y &-\frac{a}{c}x\\
-\frac{c}{b}x &0 &-\frac{b}{c}y\\
-\frac{a}{b}y &-\frac{b}{a}x &0
\end{pmatrix}.\]
Therefore, we have that $N_{abc}^\flat = (\mu\phi_{abc},\mu\phi_{abc},\mu\phi_{abc}) \in \MatFac{S}{3}{x^3 +y^3}$, where $\mu^3=-1$. Hence, the collection of non-isomorphic indecomposable summands of $N_{abc}^\flat$, for all $(a,b,c)$ as above, forms an infinite collection of indecomposable objects in $\MatFac{S}{d}{f}$. Furthermore, the entries of $\phi_{abc}$ lie in the maximal ideal of $S$ so $x^3+y^3$ has infinite reduced $3$-MF type as well (see Section \ref{section:reduced_dMFs}).
\end{example}

\section{Decomposability of \texorpdfstring{$N^\flat$}{N-flat} and \texorpdfstring{$X^\sharp$}{X-sharp}} \label{section:decomp_sharp_flat}

Let $d\ge 2$ and $(S,\mf n, \mathbf{k})$ be a complete regular local ring. We maintain the same assumptions on $\mathbf{k}$ as in Section \ref{section:rep_type}, that is, we assume that $\mathbf{k}$ is algebraically closed of characteristic not dividing $d$. Let $f \in \mf n^2$ be non-zero, $R = S/(f)$, and $R^\sharp = S\llbracket z \rrbracket/(f+z^d)$.

Proposition \ref{thm:flat_sharp_and_sharp_flat} showed that both $N^{\flat\sharp}$ and $X^{\sharp\flat}$ decompose into a sum of $d$ objects. In this section we investigate the decomposability of $N^\flat$ and $X^\sharp$.

Recall that the shift functor $T: \MatFac{S}{d}{f} \to \MatFac{S}{d}{f}$ satisfies $T^d = 1_{\MatFac{S}{d}{f}}$. In particular, for any $X \in \MatFac{S}{d}{f}$, there exists a smallest integer $k\in\{1,2,\dots,d-1,d\}$ such that $T^kX \cong X$. We call $k$ the \textit{order} of $X$.

\begin{lem}\label{thm:order_of_X}
For any $X \in \MatFac{S}{d}{f}$, the order of $X$ is a divisor of $d$.
\end{lem}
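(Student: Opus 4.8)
The plan is to run the standard argument showing that the order of an element under a cyclic action divides the order of the cyclic group; here the relevant ``group'' is the subgroup of autoequivalences of $\MatFac{S}{d}{f}$ generated by $T$, which has order dividing $d$ because $T^d = 1_{\MatFac{S}{d}{f}}$. Let $k$ denote the order of $X$, i.e.\ the least integer in $\{1,\dots,d\}$ with $T^kX\cong X$; this set is nonempty precisely because $T^dX\cong X$, so $k$ is well defined (this is exactly the observation recorded in the sentence preceding the lemma).

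First I would note that since $T$ is a functor it preserves isomorphisms, so applying $T^k$ to a fixed isomorphism $T^kX\cong X$ and composing gives $T^{jk}X\cong X$ for every $j\ge 1$; this is a one-line induction. Next, apply the division algorithm: write $d=qk+r$ with $0\le r<k$. Then
\[
X\;\cong\;T^dX\;=\;T^{r}\bigl(T^{qk}X\bigr)\;\cong\;T^{r}X,
\]
where the last isomorphism uses $T^{qk}X\cong X$ together with the functoriality of $T^{r}$. Hence $T^{r}X\cong X$ with $0\le r<k$, and minimality in the definition of the order forces $r=0$. Therefore $d=qk$, i.e.\ $k\mid d$.

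I do not expect any genuine obstacle here: the argument is purely formal, using only $T^d=1_{\MatFac{S}{d}{f}}$ and the functoriality of $T$, both already available. The one point worth stating explicitly is simply that the order is well defined, which is why the set $\{k\in\{1,\dots,d\}:T^kX\cong X\}$ being nonempty is recorded before the lemma.
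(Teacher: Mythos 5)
Your argument is correct and is essentially the paper's: the paper packages the same fact as the statement that the stabilizer of $[X]$ under the action of the cyclic group generated by $T$ is a subgroup, hence generated by $T^k$ with $k\mid d$, while you unwind that group-theoretic fact explicitly via the division algorithm. Both proofs rely only on $T^d=1_{\MatFac{S}{d}{f}}$ and functoriality of $T$, and your explicit check that $T^{jk}X\cong X$ for all $j$ followed by minimality forcing $r=0$ is exactly the content the paper is citing.
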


\begin{proof}
For a given $X \in \MatFac{S}{d}{f}$, the cyclic group of order $d$ generated by $T$ acts on the set of equivalence classes $\{[T^iX]:i \in \Z_d\}$. In particular, the stabilizer of $[X]$ is generated by $T^k$ for some $k\mid d$ which can be taken to be the smallest possible in $\{1,2,\dots,d\}$. It follows that the order of $X$ is $k$.
\end{proof}

The next result builds on an idea of Kn\"orrer \cite[Lemma 1.3]{knorrer_cohen-macaulay_1987} and Gabriel \cite[p. 95]{gabriel_universal_1981}. The proof is based on \cite[Lemma 8.25]{leuschke_cohen-macaulay_2012} which states that a matrix factorization $(\phi,\psi)\in \MatFac{S}{2}{f}$ satisfying $(\phi,\psi)\cong (\psi,\phi)$ is isomorphic to a factorization of the form $(\phi_0,\phi_0)$\footnote{The proof of \cite[Lemma 8.25]{leuschke_cohen-macaulay_2012} contains a small typo which we remedy in the proof of Proposition \ref{thm:semi_sym_dMF}. Specifically, the morphism $(\alpha,\beta)$ should be replaced by $(c^{-1/2}\alpha,c^{-1/2}\beta)$.}. For $d>2$, the situation is similar, but the divisors of $d$ play a role. Specifically, if $X$ has order $k$, then $X$ is isomorphic to the concatenation of $k$ matrices, repeated $d/k$ times.

\begin{prop}\label{thm:semi_sym_dMF}
Let $X \in \MatFac{S}{d}{f}$ be indecomposable of size $n$ and assume $X$ has order $k < d$. Then there exist $S$-homomorphisms $\phi_1',\phi_2',\dots,\phi_k'$ such that $(\phi_1'\phi_2'\cdots\phi_k')^{\frac{d}{k}} = f\cdot I_n$ and
\[X \cong (\phi_1',\phi_2',\dots,\phi_k', \phi_1',\phi_2',\dots,\phi_k', \dots,\phi_1',\phi_2',\dots,\phi_k').\]
\end{prop}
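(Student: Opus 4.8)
The plan is to mimic the proof of \cite[Lemma 8.25]{leuschke_cohen-macaulay_2012} but keep track of the cyclic symmetry more carefully. Write $X = (\phi_1,\phi_2,\dots,\phi_d)$ of size $n$. Since $X$ has order $k$, by definition $T^kX \cong X$, which means there is an isomorphism
\[\alpha = (\alpha_1,\alpha_2,\dots,\alpha_d) : X \xrightarrow{\ \sim\ } T^kX = (\phi_{k+1},\phi_{k+2},\dots,\phi_{k+d})\]
in $\MatFac{S}{d}{f}$ (indices mod $d$). Spelling out what it means to be a morphism of matrix factorizations, this amounts to a family of commuting squares $\alpha_i \phi_{i} = \phi_{k+i}\alpha_{i+1}$ for all $i \in \Z_d$, with each $\alpha_i$ an isomorphism of free $S$-modules. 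Iterating $\alpha$ against $T^k\alpha, T^{2k}\alpha, \dots$ produces an endomorphism $\beta = (T^{(d/k-1)k}\alpha)\circ\cdots\circ(T^k\alpha)\circ\alpha$ of $X$ (since $(d/k)\cdot k \equiv 0$), i.e.\ $\beta \in \End_{\MatFac{S}{d}{f}}(X)$. Because $X$ is indecomposable and $S$ is complete, $\End(X)$ is local, so $\beta$ is either a unit or lies in the maximal ideal; as $\beta$ is a composite of isomorphisms it is a unit, and hence (replacing $\mathbf{k}$'s characteristic hypothesis: $\charac\mathbf{k}\nmid d/k$ since $\charac\mathbf k \nmid d$) we can extract a $(d/k)$-th root $c$ of $\beta$ in the local ring $\End(X)$ using Hensel's lemma. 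Rescaling $\alpha$ by $c^{-1}$, i.e.\ replacing $\alpha$ with $c^{-1}\circ\alpha$ (which is still an isomorphism $X \to T^kX$ because $c$ commutes with the structure maps, being an endomorphism of $X$), we may assume that the $(d/k)$-fold composite $(T^{(d/k-1)k}\alpha)\circ\cdots\circ\alpha$ equals the identity of $X$.

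Next I would use this normalized $\alpha$ to build the claimed isomorphism. Define new maps $\phi_i'$ for $i$ in a set of representatives $\{1,\dots,k\}$ by conjugating the $\phi_i$ through partial composites of the $\alpha_j$'s: concretely, set $\psi_1 = 1_{F_1}$ and, for $1 \le j \le d$, let $\psi_{j+1} = \psi_j \cdot (\text{appropriate } \alpha\text{-factor})$ so that the diagram
\[\begin{tikzcd}
F_2 \rar{\phi_1} \dar{\psi_2} & F_1 \dar{\psi_1} \\
F_2' \rar{\phi_1'} & F_1'
\end{tikzcd}\]
and its analogues commute, with $\psi_{j+d} = \psi_j$ forced by the normalization $(T^{(d/k-1)k}\alpha)\circ\cdots\circ\alpha = 1_X$, and with the periodicity $\phi'_{i+k} = \phi'_i$ arising from the periodicity built into $\alpha$ (namely that all the squares defining $\alpha$ use the single shift by $k$). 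The collection $(\psi_j)_j$ assembles into an isomorphism in $\MatFac{S}{d}{f}$ from $X$ to $(\phi_1',\dots,\phi_k',\phi_1',\dots,\phi_k',\dots)$, and the relation $\phi_1'\cdots\phi_d' = f\cdot I_n$ read along the periodic pattern gives $(\phi_1'\cdots\phi_k')^{d/k} = f\cdot I_n$.

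The step I expect to be the main obstacle is the bookkeeping in the previous paragraph: making sure the partial composites $\psi_j$ are defined consistently so that they are genuinely periodic with period $d$ (not merely up to a unit), which is exactly where the normalization $\beta = 1$ (equivalently, the Hensel extraction of the $(d/k)$-th root) is needed and must be invoked at the right place. A clean way to organize this — and probably how I would actually write it — is to package the statement as: an indecomposable $X$ of order $k$ is, via the equivalence of \cite[Theorem 4.4]{tribone_matrix_2021}, an indecomposable object of $\MCM_{\sigma}(R^\sharp)$ on which a suitable power of $\sigma$ acts, so that $X$ descends along the subgroup of order $d/k$; but the elementary matrix-factorization argument above is self-contained, so I would present that and relegate the Hensel's-lemma root extraction (valid since $\End(X)$ is a complete local ring with residue field $\mathbf k$ and $\charac\mathbf k\nmid d$) to a one-line justification, mirroring the corrected footnote for the $d=2$ case where the scalar $c^{-1/2}$ plays precisely this role.
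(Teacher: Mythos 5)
Your overall strategy is the same as the paper's: iterate $\alpha:X\to T^kX$ into an automorphism $\tilde\alpha$ of $X$, use locality of $\End_{\MatFac{S}{d}{f}}(X)$ to normalize that automorphism to the identity by extracting a $(d/k)$-th root, and then assemble partial composites of the normalized isomorphism into the periodicity isomorphism. There is, however, a genuine gap at the normalization step. You claim that after replacing $\alpha$ by $c^{-1}\circ\alpha$ (where $c^{d/k}=\tilde\alpha$) the new $(d/k)$-fold composite is the identity ``because $c$ commutes with the structure maps, being an endomorphism of $X$.'' Commuting with the structure maps $\phi_i$ of $X$ is not what is needed: writing $r=d/k$ and $\alpha'=\alpha\circ c^{-1}$, the composite $(T^{(r-1)k}\alpha')\circ\cdots\circ(T^k\alpha')\circ\alpha'$ interleaves the factors $T^{jk}(c^{-1})$ with the factors $T^{jk}(\alpha)$, and to telescope it into $\tilde\alpha\circ c^{-r}=1_X$ you need the intertwining relation $\alpha\circ c = T^k(c)\circ\alpha$, i.e.\ componentwise $\alpha_i c_i = c_{i+k}\alpha_i$ --- a relation between $c$ and $\alpha$, not between $c$ and the $\phi_i$. (Note also that $c^{-1}\circ\alpha$ does not typecheck as written, since $c^{-1}$ is an endomorphism of $X$ while $\alpha$ lands in $T^kX$; the rescaling must be $\alpha\circ c^{-1}$ or $T^k(c^{-1})\circ\alpha$, and the placement matters for the computation.)

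This intertwining relation is exactly what the paper's proof is built around: it peels off the scalar part $\tilde\alpha = c\cdot 1_X+\rho$ with $c\in\mathbf{k}^\times$ and $\rho\in\rad\End(X)$ (using that $\mathbf{k}$ is algebraically closed, so $\End(X)/\rad\End(X)\cong\mathbf{k}$), rescales by the \emph{scalar} $c^{-1/r}$ (scalars do commute with everything), proves $\alpha_i\rho_i=\rho_{i+k}\alpha_i$, and only then defines $\beta_i=\alpha_i g(\rho_i)=g(\rho_{i+k})\alpha_i$ with $g(x)=(1+x)^{-1/r}$; the identity $\beta_i\beta_{i-k}\cdots\beta_{i+k}=1$ comes precisely from using that relation to push all $g(\rho)$-factors to one side. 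Your root $c$ of $\tilde\alpha$ can be made to work --- it is a power series in $\tilde\alpha$, and $\tilde\alpha$ itself satisfies $\alpha\circ\tilde\alpha = T^k(\tilde\alpha)\circ\alpha$ because $T^{rk}=1_{\MatFac{S}{d}{f}}$, so the relation passes to $c$ --- but that argument must be made explicitly; it does not follow from $c\in\End(X)$. (Invoking ``Hensel's lemma'' in the noncommutative ring $\End(X)$ also deserves the more careful binomial-series justification the paper gives.) The remaining bookkeeping with your $\psi_j$ is, as you anticipate, the same as the paper's maps $\gamma_{j,t}$ and is routine once a normalized $\beta:X\to T^kX$ with the correct intertwining property is in hand.
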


\begin{proof}
Let $X = (\phi_1:F_2\to F_1,\phi_2:F_3 \to F_2,\dots, \phi_d:F_1 \to F_d)$ and set $r = d/k$. By assumption, there is an isomorphism $\alpha = (\alpha_1,\dots,\alpha_d): X \to T^kX$. By applying $T^k(-)$ repeatedly, we obtain an automorphism $\tilde \alpha$ of $X$ defined by the composition
\[\begin{tikzcd}[column sep = 3.5em]
X \rar{\alpha} &T^kX \rar{T^k(\alpha)} &T^{2k}X \rar{T^{2k}(\alpha)} &\cdots \rar{T^{(r-2)k}(\alpha)} &T^{(r-1)k}X \rar{T^{(r-1)k}(\alpha)} &X.
\end{tikzcd}\] In particular,
\(\tilde\alpha\) is the $d$-tuple \((\alpha_{i+(r-1)k} \alpha_{i + (r-2)k} \cdots \alpha_{i+k} \alpha_i)_{i=1}^d.\) Since $X$ is indecomposable the endomorphism ring $\Lambda \coloneqq \End_{\MatFac{S}{d}{f}}(X)$ is local. Since $\mathbf{k}$ is algebraically closed, it cannot have any non-trivial finite extensions which are division rings. Hence the division ring $\Lambda / \rad\Lambda$ must be isomorphic to  $\mathbf{k}$. This allows us to write
\(\tilde \alpha = c\cdot 1_X + \rho\) for some $c \in \mathbf{k}^\times$ and $\rho \in \rad \Lambda$. Since $\charac \mathbf{k}\not\vert\,\, d$, we may scale $\alpha$ by $c^{-\frac{1}{r}}$ and assume $\tilde \alpha = 1_X + \rho$ for $\rho = (\rho_1,\rho_2,\dots,\rho_d) \in \rad\Lambda$.

If $i \in \Z_d$, then
\[\begin{split}
    \alpha_i\rho_i &= \alpha_i(\alpha_{i+(r-1)k}\alpha_{i+(r-2)k}\cdots\alpha_{i+k}\alpha_i - 1_{F_i})\\
                   &= (\alpha_i\alpha_{i+(r-1)k}\alpha_{i+(r-2)k}\cdots\alpha_{i+k} - 1_{F_{i+k}})\alpha_i\\
                   &=\rho_{i+k}\alpha_i.
\end{split}\] Represent the function $g(x) = (1+x)^{-1/r}$ by its Maclaurin series and define, for each $i \in \Z_d$,
\[\beta_i \coloneqq \alpha_ig(\rho_i) = g(\rho_{i+k})\alpha_i: F_i \to F_{i + k}.\]
For $i \in \Z_d$, we have that
\[\beta_i\phi_i = g(\rho_{i+k})\alpha_i\phi_i = g(\rho_{i+k})\phi_{i+k}\alpha_{i+1} = \phi_{i+k}g(\rho_{i+k+1})\alpha_{i+1} = \phi_{i+k}\beta_{i+1}.\] Hence, $\beta = (\beta_1,\beta_2,\dots,\beta_d) \in \Hom_{\MatFac{S}{d}{f}}(X,T^kX)$. By repeatedly applying $\alpha_ig(\rho_i)=g(\rho_{i+k})\alpha_i$, we have that
\[\begin{split}
    \beta_i\beta_{i-k}\beta_{i-2k}\cdots\beta_{i+2k}\beta_{i+k} &= (\alpha_{i}g(\rho_{i}))(\alpha_{i-k}g(\rho_{i-k}))(\alpha_{i-2k}g(\rho_{i-2k}))\cdots(\alpha_{i+k}g(\rho_{i+k}))\\
    &=\alpha_i\alpha_{i-k}\cdots\alpha_{i+2k}\alpha_{i+k}g(\rho_{i+k})^r\\
    &=(1_{F_{i+k}} + \rho_{i+k})(1_{F_{i+k}} + \rho_{i+k})^{-1}\\
    &=1_{F_{i+k}}.
\end{split}\]
Hence, $\beta_i$ is an isomorphism for each $i \in \Z_d$ and therefore the morphism $\beta$ is an isomorphism of matrix factorizations.

We claim that $X \cong (\beta_1\phi_1,\dots,\phi_k,\beta_1\phi_1,\dots,\phi_k,\dots,\beta_1\phi_1,\dots,\phi_k)$. For $0\leq j \leq r-1$ and $2\leq t \leq k+1$, define $\gamma_{j,t}$ to be the composition of the homomorphisms $\beta_i$ beginning at $F_{t+jk}$ of length $r-j$. In other words,
\[\gamma_{j,k} = \beta_{t+(r-1)k}\beta_{t+(r-2)k}\cdots\beta_{t+(j+1)k}\beta_{t + jk}: F_{t+jk} \to F_t.\] Note that each $\gamma_{j,k}$ is an isomorphism. For $j=0$, the index $-1$ is interpreted as $r-1$ so that $\gamma_{-1,k+1} = \beta_1$. 

Let $0\leq j \leq r-1$ and $2\leq t \leq k+1$. To finish the proof, it suffices to show that the following diagram commutes:
\[\begin{tikzcd}[column sep=3.5em]
F_{k+1 + jk} \dar{\gamma_{j,k+1}}\rar{\phi_{k+jk}} &F_{k + jk} \dar{\gamma_{j,k}} \rar{\phi_{k-1 + jk}} &\cdots \rar{\phi_{2 + jk}} &F_{2+jk} \rar{\phi_{1 + jk}} \dar{\gamma_{j,2}} &F_{1 + jk}\dar{\gamma_{j-1,k+1}}\\
F_{k+1} \rar{\phi_k}      &F_{k} \rar{\phi_{k-1}} &\cdots \rar{\phi_2} &F_2 \rar{\beta_1\phi_1} &F_{k+1}.
\end{tikzcd}\] The commutativity can be broken into three steps. First, we show that $\gamma_{j-1,k+1}\phi_{1+jk} = \beta_1\phi_1\gamma_{j,2}.$ Repeatedly applying $\beta_i\phi_i = \phi_{i+k}\beta_{i+1}$, $i \in \Z_d$, we have that
\[\begin{split}
    \gamma_{j-1,k+1}\phi_{1+jk}
    &=\beta_{1}\beta_{1-k}\beta_{1-2k}\cdots\beta_{1+(j+1)k}\beta_{1+jk}\phi_{1+jk}\\
    &=\beta_1\phi_1\beta_{2-k}\beta_{2-2k}\cdots\beta_{2+(j+1)k}\beta_{2+jk}\\
    &=\beta_1\phi_1\gamma_{j,2}.
\end{split}\]
Similarly, for $2\leq t \leq k$, we have that
\[\begin{split}
    \gamma_{j,t}\phi_{t+jk} &= \beta_{t-k}\beta_{t-2k}\cdots\beta_{t+(j+1)k}\beta_{t + jk}\phi_{t+jk}\\
    &=\phi_t\beta_{t+1 - k}\beta_{t+1 -2k}\cdots\beta_{t+1 +(j+1)k}\beta_{t+1 +jk}\\
    &=\phi_t\gamma_{j,t+1}
\end{split}\]
and
\[\begin{split}
    \gamma_{j,k}\phi_{k + jk} &= \beta_d\beta_{-k}\beta_{-2k}\cdots \beta_{(j+1)k}\phi_{(j+1)k}\\
    &=\phi_k\beta_1\beta_{1-k}\cdots\beta_{1+(j+1)k}\\
    &=\phi_k\gamma_{j,k+1}.
\end{split}\]
Thus, the $d$-tuple $\gamma = (\gamma_{-1,k+1},\gamma_{0,2},\gamma_{0,3},\dots,\gamma_{0,k+1},\gamma_{1,2},\dots,\gamma_{r-1,k})$ forms an isomorphism from $X$ to $(\beta_1\phi_1,\dots,\phi_k,\beta_1\phi_1,\dots,\phi_k,\dots,\beta_1\phi_1,\dots,\phi_k)$.
\end{proof}

The special case of order $1$ will be important going forward.

\begin{cor}\label{thm:sym_dMF}
Let $X \in \MatFac{S}{d}{f}$ be indecomposable of size $n$ and assume that $X \cong TX$. Then there exists a homomorphism $\phi:S^n \to S^n$ such that $\phi^d = f \cdot I_n$ and $X \cong (\phi,\phi,\dots,\phi)$. \qed
\end{cor}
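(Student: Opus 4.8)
The plan is to recognize Corollary \ref{thm:sym_dMF} as exactly the case $k=1$ of Proposition \ref{thm:semi_sym_dMF}. The hypothesis $X \cong TX$ says that $T^1X \cong X$, and since $1$ is the least positive integer this means the order of $X$ (in the sense of the paragraph preceding Lemma \ref{thm:order_of_X}) equals $1$; as $d \geq 2$ we have $1 < d$, so Proposition \ref{thm:semi_sym_dMF} applies with $k = 1$ and $r = d/k = d$. It produces a single $S$-homomorphism $\phi_1' \colon S^n \to S^n$ with $(\phi_1')^d = f \cdot I_n$ and $X \cong (\phi_1',\phi_1',\dots,\phi_1')$ ($d$ copies). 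Setting $\phi = \phi_1'$ is then the whole proof; no new argument is needed, which is why the statement is flagged with $\square$.

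For the record, one can also give the specialized argument directly, and it is a bit cleaner than the general case because each ``block'' has length one and there is no bookkeeping across blocks. Fix an isomorphism $\alpha = (\alpha_1,\dots,\alpha_d) \colon X \to TX$, so that $\alpha_i\phi_i = \phi_{i+1}\alpha_{i+1}$ for all $i \in \Z_d$; composing the chain $X \xrightarrow{\alpha} TX \xrightarrow{T(\alpha)} T^2X \to \cdots \to T^dX = X$ yields an automorphism $\tilde\alpha$ of $X$, with $i$th component the composite $\alpha_{i+d-1}\circ\cdots\circ\alpha_{i+1}\circ\alpha_i$, exactly as in the proof of Proposition \ref{thm:semi_sym_dMF}. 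Since $X$ is indecomposable and $S$ is complete, $\Lambda \coloneqq \End_{\MatFac{S}{d}{f}}(X)$ is local, and because $\mathbf{k}$ is algebraically closed $\Lambda/\rad\Lambda \cong \mathbf{k}$; write $\tilde\alpha = c\cdot 1_X + \rho$ with $c \in \mathbf{k}^\times$ and $\rho \in \rad\Lambda$. As $\charac\mathbf{k}\nmid d$ the scalar $c^{-1/d}$ lies in $\mathbf{k}$, so after rescaling $\alpha$ by it we may assume $\tilde\alpha = 1_X + \rho$. Correcting $\alpha$ by a $d$th root of the unit $1+\rho$ — concretely $\beta_i = \alpha_i\, g(\rho_i) = g(\rho_{i+1})\,\alpha_i$ with $g(x) = (1+x)^{-1/d}$, as in the proposition — gives an isomorphism $\beta = (\beta_1,\dots,\beta_d) \colon X \to TX$ whose $d$-fold composite is $1_X$, and the relations $\beta_i\phi_i = \phi_{i+1}\beta_{i+1}$ then identify $X$ with $(\beta_1\phi_1,\beta_1\phi_1,\dots,\beta_1\phi_1)$, where $(\beta_1\phi_1)^d = f\cdot I_n$. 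One takes $\phi = \beta_1\phi_1$.

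I do not expect a genuine obstacle: all the real content sits in Proposition \ref{thm:semi_sym_dMF}. In a from-scratch proof the only point that deserves a word is the meaning of $c^{-1/d}$ and of $g(\rho_i)$ — the former because $\mathbf{k}$ is algebraically closed, the latter because $d$ is a unit in $S$ (hence in the complete local ring $\Lambda$) so that $1+\rho_i$ has a unique $d$th root congruent to $1$ modulo $\rad\Lambda$, obtained either from the Maclaurin series of $g$ or by Hensel's lemma applied to $T^d - (1+\rho_i)^{-1}$. Everything else is the same short diagram chase as in the proposition, trivialized here by $k = 1$.
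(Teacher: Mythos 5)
Your proposal is correct and matches the paper exactly: the corollary is stated with a \qed precisely because it is the $k=1$ instance of Proposition \ref{thm:semi_sym_dMF} (the hypothesis $X\cong TX$ forces the order to be $1<d$, and the proposition's conclusion with $r=d$ is verbatim the corollary). Your specialized re-derivation is just that proposition's proof with the block-length bookkeeping stripped out, so no new content is needed.
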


\begin{prop} \label{thm:decomp_thm_setup} Let $X \in \MatFac{S}{d}{f}$, $N \in \MCM(R^\sharp)$, and assume both $X$ and $N$ are indecomposable objects.
\begin{enumerate}[label = (\roman*)]
    \item \label{thm:decomp_thm_setup_1} If $X \cong TX$, then $X \cong M^\flat$ for some $M \in \MCM(R^\sharp)$.
    \item \label{thm:decomp_thm_setup_2} If $N \cong \sigma^*N$, then $N \cong Y^\sharp$ for some $Y \in \MatFac{S}{d}{f}$.
\end{enumerate}
\end{prop}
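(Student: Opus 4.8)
The plan is to treat the two parts separately. Part (i) is essentially a reformulation of Corollary \ref{thm:sym_dMF}, while part (ii) is a descent statement for the forgetful functor $H\colon\MCM_\sigma(R^\sharp)\to\MCM(R^\sharp)$ and is where the real work lies.

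For (i): since $X$ is indecomposable with $X\cong TX$, Corollary \ref{thm:sym_dMF} gives $\phi\colon S^n\to S^n$ with $\phi^d=f\cdot I_n$ and $X\cong(\phi,\phi,\dots,\phi)$. I would then set $M:=(S^n,\mu^{-1}\phi)$; since $\mu^d=-1$ we have $(\mu^{-1}\phi)^d=\mu^{-d}\phi^d=-f\cdot I_n$, so $M$ is an MCM $R^\sharp$-module under the dictionary recalled in Section \ref{section:rep_type}. By the definition of $(-)^\flat$, $M^\flat=(\mu\cdot\mu^{-1}\phi,\dots,\mu\cdot\mu^{-1}\phi)=(\phi,\dots,\phi)\cong X$, as desired (and by Remark \ref{remark:mu} the choice of $\mu$ is immaterial).

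For (ii): an isomorphism $N\cong\sigma^*N$ is the same datum as an $S$-linear automorphism $\rho$ of $N$ (it is automatically $S$-linear because $\sigma$ fixes $S$) satisfying $\rho(rn)=\sigma(r)\rho(n)$ for $r\in R^\sharp$; such a $\rho$ with the extra relation $\rho^d=1_N$ is exactly the same as an $R^\sharp[\sigma]$-module structure on $N$ extending the $R^\sharp$-structure, i.e.\ a lift of $N$ to $\MCM_\sigma(R^\sharp)$. So it suffices to modify $\rho$ to have $d$th power $1_N$. Now $\rho^d$ is $R^\sharp$-linear (as $\sigma^d=1$), hence lies in $\Lambda:=\End_{R^\sharp}(N)$, which is local with $\Lambda/\rad\Lambda\cong\mathbf{k}$ since $N$ is indecomposable and $\mathbf{k}$ is algebraically closed. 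After rescaling $\rho$ by a unit of $S$ lifting a $d$th root in $\mathbf{k}$ of the residue class of $\rho^d$ (possible since $\charac\mathbf{k}\nmid d$), I may assume $u:=\rho^d\in 1+\rad\Lambda$. Conjugation $\gamma:=\rho(-)\rho^{-1}$ is an $S$-linear ring automorphism of $\Lambda$ with $\gamma(u)=\rho\,\rho^d\,\rho^{-1}=\rho^d=u$; hence $\gamma$ restricts to the identity on the commutative complete local subring $\Lambda_0:=S[u]\subseteq\Lambda$ (it is a module-finite $S$-algebra, and local because $u\in 1+\rad\Lambda$). Applying Hensel's lemma in $\Lambda_0$ to $x^d-u^{-1}$, whose derivative $dx^{d-1}$ is a unit at $x=1$, yields $\lambda\in 1+\rad\Lambda_0$ with $\lambda^d=u^{-1}$, and automatically $\gamma(\lambda)=\lambda$. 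Then $\tilde\sigma:=\lambda\rho$ is again a $\sigma$-semilinear automorphism of $N$, and the telescoping identity $(\lambda\rho)^d=\lambda\,\gamma(\lambda)\cdots\gamma^{d-1}(\lambda)\,\rho^d$ collapses to $\lambda^d u=1_N$. Thus $(N,\tilde\sigma)$ is an object of $\MCM_\sigma(R^\sharp)$; writing it as $\mc B(Y)$ for some $Y\in\MatFac{S}{d}{f}$ via the equivalence of \cite{tribone_matrix_2021}, Lemma \ref{thm:A_B_flat_sharp}\ref{thm:A_B_flat_sharp_1} gives $Y^\sharp=H\circ\mc B(Y)\cong N$.

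The main obstacle is the descent in (ii): one cannot naively extract a $d$th root of $\rho^d$ inside $\Lambda$, since $\rho$ does not commute with $\Lambda$ and a combinatorial count of indecomposable summands of $N^\flat$ (using Proposition \ref{thm:flat_sharp_and_sharp_flat}) is not enough to force a summand $Y$ of the correct size. The key is that $\rho^d$ is $\gamma$-invariant, which confines the root extraction to the commutative ring $\Lambda_0$ where Hensel's lemma applies cleanly; conceptually this is the vanishing of the Clifford-theoretic obstruction in $H^2(\langle\sigma\rangle,\mathbf{k}^\times)=\mathbf{k}^\times/(\mathbf{k}^\times)^d=0$. Along the way I would also verify the routine points: that $\rho$ is $S$-linear, that $S[u]$ is genuinely a complete local ring, and the telescoping identity $(\lambda\rho)^d=\lambda\,\gamma(\lambda)\cdots\gamma^{d-1}(\lambda)\,\rho^d$ together with $\gamma(\lambda)=\lambda$.
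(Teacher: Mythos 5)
Your proposal is correct and matches the paper's argument: part (i) is exactly the paper's proof via Corollary \ref{thm:sym_dMF} and the pair $(S^n,\mu^{-1}\phi)$, and part (ii) carries out precisely the normalization that the paper delegates to ``a similar technique to the proof of Proposition \ref{thm:semi_sym_dMF}'' before invoking the equivalence $\mc B$ and Lemma \ref{thm:A_B_flat_sharp}. Your use of Hensel's lemma in $S[u]$ to extract the $\gamma$-invariant $d$th root is just a repackaging of the paper's Maclaurin-series trick $g(x)=(1+x)^{-1/r}$ applied to an element of $1+\rad\Lambda$, so the route is essentially the same.
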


\begin{proof}
If $X \cong TX$, then Corollary \ref{thm:sym_dMF} implies that there exists a free $S$-module $F$ and an endomorphism $\phi:F \to F$ such that $\phi^d = f\cdot 1_F$ and $X \cong (\phi,\phi,\dots,\phi) \in \MatFac{S}{d}{f}$. The pair $(F,\mu^{-1}\phi)$ defines an $\MCM(R^\sharp)$ module $M$ as follows: As an $S$-module, $M = F$, and the $z$-action on $M$ is given by $z\cdot m = \mu^{-1}\phi(m)$ for all $m \in M$, where $\mu \in S$ satisfies $\mu^d = -1$. Since $(\mu^{-1}\phi)^d= -f\cdot 1_M$, $M$ is naturally an $R^\sharp$-module. Since $M=F$ is free over $S$, it is MCM over $R^\sharp$. By applying $(-)^\flat$, we have that $M^\flat = (\phi,\phi,\dots,\phi)\cong X$.

Assume $N \cong \sigma^*N$. Using a similar technique to the proof of Proposition \ref{thm:semi_sym_dMF}, we obtain an isomorphism of $R^\sharp$-modules $\theta: N \to \sigma^* N$ such that 
\[(\sigma^{d-1})^*\theta\circ(\sigma^{d-2})^*\theta\circ\cdots\circ\sigma^*\theta\circ\theta = 1_{N}.\] Such an isomorphism defines the structure of an $R^\sharp[\sigma]$-module on $N$. Thus, by \cite[Theorem 4.4]{tribone_matrix_2021}, there exists $Y \in \MatFac{S}{d}{f}$ such that $\mc B(Y) \cong N$ as $R^\sharp[\sigma]$-modules and therefore $Y^\sharp \cong N$ as $R^\sharp$-modules by \ref{thm:A_B_flat_sharp}\ref{thm:A_B_flat_sharp_1}.
\end{proof}

\begin{prop} \label{thm:decomp_thm} Let $X$ be indecomposable in $\MatFac{S}{d}{f}$ and $N$ be indecomposable in $\MCM(R^\sharp)$.
\begin{enumerate}[label = (\roman*)]
    \item \label{thm:decomp_thm_1} Assume $X \cong TX$. Then $X^\sharp \cong \bigoplus_{k \in \Z_d} (\sigma^k)^*M$ for some indecomposable $M \in \MCM(R^\sharp)$ such that $M \not\cong \sigma^*M$.
    \item \label{thm:decomp_thm_2} The number of indecomposable summands of $X^\sharp$ is at most $d$. Furthermore, if $X^\sharp$ has exactly $d$ indecomposable summands, then $X \cong TX$.
    \item \label{thm:decomp_thm_3} Assume $N \cong \sigma^*N$. Then $N^\flat \cong \bigoplus_{k \in \Z_d} T^kY$ for some indecomposable $Y \in \MatFac{S}{d}{f}$ such that $Y \not\cong TY$.
    \item \label{thm:decomp_thm_4} The number of indecomposable summands of $N^\flat$ is at most $d$. Furthermore, if $N^\flat$ has exactly $d$ indecomposable summands, then $N \cong \sigma^*N$.
\end{enumerate}
\end{prop}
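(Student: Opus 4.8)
The plan is to organize the four statements as two mirror‑image pairs: (i) and (iii) are ``fixed‑point'' statements, while (ii) and (iv) are the accompanying counts of indecomposable summands. Throughout I would use the two decompositions of Proposition~\ref{thm:flat_sharp_and_sharp_flat}, the ``existence in the image'' statements of Proposition~\ref{thm:decomp_thm_setup}, and two elementary \emph{insensitivity} facts. First, $((\sigma^k)^*N)^\flat\cong N^\flat$ for every $k\in\Z_d$: indeed $(\sigma^k)^*N$ has $z$ acting by $\omega^k\phi$, and $\mu\omega^k$ is again a $d$th root of $-1$, so this is immediate from Remark~\ref{remark:mu}. Second, $(T^kX)^\sharp\cong X^\sharp$ for every $k\in\Z_d$: unwinding the definition of $(-)^\sharp$, both sides have underlying $S$-module $\bigoplus_{i\in\Z_d}F_i$ with $z$ acting as $\mu^{-1}\phi_i\colon F_{i+1}\to F_i$ for each $i$, so they differ only by a cyclic relabelling of the summands. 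I also record that $(-)^\flat$ and $(-)^\sharp$ are additive and never annihilate a nonzero object, so that KRS lets one compare numbers of indecomposable summands on either side of these functors.

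For (i): assuming $X\cong TX$, Proposition~\ref{thm:decomp_thm_setup}(i) produces $M\in\MCM(R^\sharp)$ with $X\cong M^\flat$; writing $M$ as a sum of indecomposables and noting each nonzero summand has nonzero image under $(-)^\flat$ forces $M$ to be indecomposable, since $X$ is. Then $X^\sharp\cong M^{\flat\sharp}\cong\bigoplus_{k\in\Z_d}(\sigma^k)^*M$ by Proposition~\ref{thm:flat_sharp_and_sharp_flat}. If in addition $M\cong\sigma^*M$, then Proposition~\ref{thm:decomp_thm_setup}(ii) would give $M\cong Y^\sharp$ with $Y\neq 0$, whence $X\cong M^\flat\cong Y^{\sharp\flat}\cong\bigoplus_{k\in\Z_d}T^kY$ splits into $d\ge 2$ nonzero summands, contradicting indecomposability of $X$; so $M\not\cong\sigma^*M$. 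Statement (iii) is the mirror: from $N\cong\sigma^*N$ and Proposition~\ref{thm:decomp_thm_setup}(ii) one gets an indecomposable $Y$ with $N\cong Y^\sharp$, hence $N^\flat\cong Y^{\sharp\flat}\cong\bigoplus_k T^kY$, and $Y\cong TY$ is excluded because then Proposition~\ref{thm:decomp_thm_setup}(i) would make $N\cong Y^\sharp\cong M^{\flat\sharp}$ split into $d$ summands.

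For (ii) and (iv) I would first get the bound. Since $X^{\sharp\flat}\cong\bigoplus_{k\in\Z_d}T^kX$ has exactly $d$ indecomposable summands, writing $X^\sharp=\bigoplus_{i=1}^m N_i$ into indecomposables and applying $(-)^\flat$ gives $\bigoplus_i N_i^\flat\cong X^{\sharp\flat}$ with each $N_i^\flat\neq 0$, so $m\le d$; symmetrically for $N^\flat$ via $N^{\flat\sharp}\cong\bigoplus_k(\sigma^k)^*N$. The heart is the claim: \emph{if $N\in\MCM(R^\sharp)$ and $N^\flat$ are both indecomposable, then $T(N^\flat)\cong N^\flat$}. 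To see it, set $X'=N^\flat$; Proposition~\ref{thm:flat_sharp_and_sharp_flat} gives $X'^{\sharp\flat}\cong\bigoplus_{k\in\Z_d}T^kX'$, while $X'^\sharp=N^{\flat\sharp}\cong\bigoplus_{k\in\Z_d}(\sigma^k)^*N$ gives $X'^{\sharp\flat}\cong\bigoplus_k((\sigma^k)^*N)^\flat\cong\bigoplus_k N^\flat=(X')^{\oplus d}$ by the first insensitivity fact; since $X'$ is indecomposable, KRS forces $T^kX'\cong X'$ for all $k$. Now in (ii), if $X^\sharp$ has exactly $d$ indecomposable summands $N_1,\dots,N_d$, then each $N_i^\flat$ is indecomposable, so on one hand the claim gives $T(N_i^\flat)\cong N_i^\flat$, and on the other hand KRS applied to $\bigoplus_i N_i^\flat\cong\bigoplus_{k\in\Z_d}T^kX$ gives $N_i^\flat\cong T^{k_i}X$ for some $k_i\in\Z_d$; combining, $T^{k_i}X\cong T^{k_i+1}X$, hence $X\cong TX$. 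The converse direction is exactly part (i). Statement (iv) runs identically with the mirror claim (\emph{$X$ indecomposable with $X^\sharp$ indecomposable $\Rightarrow\sigma^*(X^\sharp)\cong X^\sharp$}, proved the same way but using $(T^kX)^\sharp\cong X^\sharp$), together with part (iii) for the converse.

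The step I expect to be the main obstacle is the equality case of (ii) and (iv): the bound $m\le d$ is a soft count, but detecting when it is sharp needs the observation above, that applying $(-)^{\flat\sharp}$ (resp.\ $(-)^{\sharp\flat}$) to an object \emph{already} of the form $N^\flat$ (resp.\ $X^\sharp$) collapses the orbit sum of Proposition~\ref{thm:flat_sharp_and_sharp_flat} into a direct power, which via KRS pins down the shift (resp.\ twist) action as trivial. Everything else is bookkeeping with KRS and the two functor identities, the remaining care being to check that $(-)^\flat$ and $(-)^\sharp$ annihilate no nonzero object and to verify the two insensitivity facts.
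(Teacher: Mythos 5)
Your proof is correct and follows essentially the same route as the paper's: Proposition \ref{thm:decomp_thm_setup} combined with the decompositions of Proposition \ref{thm:flat_sharp_and_sharp_flat} and KRS counting of indecomposable summands, with your arguments for the indecomposability of $M$ and for $M\not\cong\sigma^*M$ being slightly more direct versions of the paper's. The one place you take a longer path is the equality case of (ii) and (iv): your auxiliary claim that $T(N^\flat)\cong N^\flat$ when $N^\flat$ is indecomposable is established by a KRS argument using the insensitivity facts, whereas the paper simply observes that $N^\flat=(\mu\phi,\dots,\mu\phi)$ is literally fixed by $T$ (and dually $\sigma^*(X^\sharp)\cong X^\sharp$ for any $X$), which makes those steps immediate.
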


\begin{proof}
If $X \cong TX$, then \ref{thm:decomp_thm_setup}\ref{thm:decomp_thm_setup_1} implies that $X\cong M^\flat$ for some $M \in \MCM(R^\sharp)$. By Proposition \ref{thm:flat_sharp_and_sharp_flat}, we have that $X^\sharp \cong M^{\flat\sharp} \cong \bigoplus_{k \in \Z_d} (\sigma^k)^*M$. Similarly, if $N \cong \sigma^*N$, then \ref{thm:decomp_thm_setup}\ref{thm:decomp_thm_setup_2} and Proposition $\ref{thm:flat_sharp_and_sharp_flat}$ imply that $N^\flat \cong \bigoplus_{k \in \Z_d} T^kY$ for some $Y \in \MatFac{S}{d}{f}$. 

Next, in the case that $TX \cong X$, we show that $M$ above is indecomposable and satisfies $M \not\cong \sigma^*M$.
\begin{itemize}
    \item Suppose $M \cong M_1\oplus M_2$ for non-zero $M_1,M_2 \in \MCM(R^\sharp)$. Then $(\sigma^k)^*M \cong (\sigma^k)^*M_1 \oplus (\sigma^k)^*M_2$ for each $k \in \Z_d.$ Therefore,
    \[X^d \cong X^{\sharp\flat} \cong \bigoplus_{k\in \Z_d}((\sigma^k)^*M_1)^\flat \oplus ((\sigma^k)^*M_2)^\flat .\] This contradicts KRS since the left side has precisely $d$ indecomposable summands while the right hand side has at least $2d$ indecomposable summands. Hence, $M$ is indecomposable.
    \item Suppose that $\sigma^*M \cong M$. Then, since $M$ is indecomposable, the arguments above imply that $M^\flat$ decomposes into a sum of at least $d$ indecomposable summands. Since $T(M^\flat) \cong M^\flat$, we have
    \[X^d \cong X^{\sharp\flat} \cong (M^\flat)^{\sharp\flat} \cong (M^\flat)^d.\] Since $X$ is indecomposable, the left hand side has precisely $d$ indecomposable summands while the right hand side has at least $d^2$ indecomposable summands. Once again, we have a contradiction and so $M \not\cong \sigma^*M$.
\end{itemize}
This completes the proof of \ref{thm:decomp_thm_1}. We omit the remaining assertions from \ref{thm:decomp_thm_3} as they follow similarly.

In order to prove \ref{thm:decomp_thm_2}, suppose $X^\sharp = M_1\oplus M_2 \oplus \cdots \oplus M_t$ for non-zero $M_i \in \MCM(R^\sharp)$. Then 
\begin{equation}\label{equation:decomp_thm}X \oplus TX \oplus \cdots \oplus T^{d-1}X \cong X^{\sharp\flat} \cong M_1^\flat \oplus M_2^\flat \oplus \cdots \oplus M_t^\flat.\end{equation} The left hand side has precisely $d$ indecomposable summands and therefore $t \leq d$.

If $X^\sharp$ decomposes into exactly $d$ indecomposables, that is, if $t=d$, then \eqref{equation:decomp_thm} implies that $M_i^\flat$ is indecomposable for each $i$ and that $X \cong M_j^\flat$ for some $1\leq j \leq d$. Then
\(TX \cong T(M_j^\flat) = M_j^\flat \cong X.\) 

The proof of \ref{thm:decomp_thm_4} is similar, observing that $\sigma^*(X^\sharp) \cong X^\sharp$ for any $X \in \MatFac{S}{d}{f}$.

\end{proof}

\section{Reduced matrix factorizations and Ulrich modules}\label{section:reduced_dMFs}

Let $(S,\mf n, \mathbf{k})$ be a complete regular local ring, $0\neq f \in \mf n^2$, and let $d \ge 2$ be an integer. Assume $\mathbf{k}$ is algebraically closed of characteristic not dividing $d$. In this section, we will consider the following special class of matrix factorizations in $\MatFac{S}{d}{f}$.

\begin{defi}
A matrix factorization $X = (\phi_1,\phi_2,\dots,\phi_d) \in \MatFac{S}{d}{f}$ is called \textit{reduced} if $\phi_k:F_{k+1}\to F_k$ is minimal for each $k \in \Z_d$, that is, if $\Ima \phi_k \subseteq \mf n F_k$. Equivalently, after choosing bases, $X$ is reduced if and only if the entries of $\phi_k$ lie in $\mf n$ for all $k \in \Z_d$. We say that $f$ has \textit{finite reduced $d$-MF type} if there are, up to isomorphism, only finitely many indecomposable reduced matrix factorizations $X \in \MatFac{S}{d}{f}$.
\end{defi}

In the case $d=2$, any indecomposable non-reduced matrix factorization is isomorphic to either $(1,f)$ or $(f,1)$ in $\MatFac{S}{2}{f}$ \cite[Remark 7.5]{yoshino_cohen-macaulay_1990}. In particular, this implies that finite $2$-MF type is equivalent to finite reduced $2$-MF type.

For $d > 2$, the situation is quite different. There at least as many non-reduced indecomposable $d$-fold factorizations of $f$ as there are reduced ones \cite[Corollary 5.15]{tribone_matrix_2021}. Moreover, finite $d$-MF type clearly implies finite reduced $d$-MF type but the converse does not hold for $d>2$ as we will show in Example \ref{example:x^a_y^a+i}.

\begin{defi}\,
\begin{enumerate}[label = (\roman*)]
    \item Let $X = (\phi_1,\dots,\phi_d) \in \MatFac{S}{d}{f}$ and pick bases to consider $\phi_k$, $k \in \Z_d$, as a square matrix with entries in $S$. Following \cite{buchweitz_cohen-macaulay_1987}, we define $I(\phi_k)$ to be the ideal generated by the entries of $\phi_k$ and set $I(X) = \sum_{k\in \Z_d}I(\phi_k)$. Note that the ideal $I(X)$ does not depend on the choice of bases.
    \item Let $c_d(f)$ denote the collection of proper ideals $I$ of $S$ such that $f \in I^d$.
\end{enumerate}
\end{defi}

In the case $d=2$, Theorem \ref{thm:BGS_Knorrer} implies that reduced $2$-MF type is determined by the cardinality of the set $c_2(f)$. One implication of Theorem \ref{thm:BGS_Knorrer} is proven explicitly in  \cite{buchweitz_cohen-macaulay_1987}. The authors show that the association $X \mapsto I(X)$ forms a surjection from the set of isomorphism classes of reduced $2$-fold matrix factorizations of $f$ onto the set $c_2(f)$. Hence, if there are only finitely many indecomposable reduced $2$-fold matrix factorizations of $f$ up to isomorphism, then the set $c_2(f)$ is finite.

The following result of Herzog, Ulrich, and Backelin shows that the association $X \mapsto I(X)$ remains surjective in the case $d > 2$.

\begin{thm}[\cite{herzog_linear_1991}, Theorem 1.2]
Let $I$ be a proper ideal of $S$ and $d \ge 2$. If $f \in I^d$, then there exists a reduced matrix factorization $X \in \MatFac{S}{d}{f}$ such that $I(X) = I$. \qed
\end{thm}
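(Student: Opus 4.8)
The plan is to build $X$ directly from the relation $f\in I^{d}$, first reducing to the case where $f$ is a single product of $d$ elements of $I$, then assembling the general case by an additive \emph{twisted tensor product} of matrix factorizations. Fix generators $I=(a_{1},\dots,a_{n})$. Since $I$ is proper we have $I\subseteq\mf n$, so any matrix factorization $X$ with $I(X)=I$ is automatically reduced, and it suffices to produce $X\in\MatFac{S}{d}{f}$ with $I(X)=I$. As $I^{d}$ is generated as an $S$-module by the degree-$d$ monomials in the $a_{j}$, absorbing the coefficient of each such monomial appearing in $f$ into one of its factors yields a representation $f=\sum_{k=1}^{m}p_{k,1}p_{k,2}\cdots p_{k,d}$ with every $p_{k,l}\in I$. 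Appending to this decomposition, for each $j$, the two products $a_{j}^{\,d}$ and $-a_{j}^{\,d}$, written as $a_{j}\cdots a_{j}$ and $(-a_{j})\,a_{j}\cdots a_{j}$ ($d$ factors), which cancel in the sum, we may moreover assume that the entries $\{p_{k,l}\}$ generate $I$.

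A single product $p_{1}\cdots p_{d}$ is the value of the size-one factorization $(p_{1},\dots,p_{d})$, whose entry ideal is $(p_{1},\dots,p_{d})$. To combine such pieces I will use the module picture of Section~\ref{section:rep_type}: a $d$-fold matrix factorization of $g$ is the same datum as a $\Z/d$-graded free $S$-module $N=\bigoplus_{i\in\Z_{d}}N_{i}$ together with a degree-one endomorphism $\Theta$ (i.e. $\Theta(N_{i})\subseteq N_{i+1}$) with $\Theta^{d}=g\cdot 1_{N}$; concretely $N=X^{\sharp}$, $\Theta=\mu z$, the $N_{i}$ are the $\omega^{i}$-eigenspaces of $\sigma$, and $I(X)$ is the ideal of entries of $\Theta$ in a grading-adapted basis. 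Given such data $(N,\Theta)$ for $g$ and $(N',\Theta')$ for $h$, put the total $\Z/d$-grading on $N\otimes_{S}N'$ and define degree-one endomorphisms
\[A\colon\ x\otimes y\longmapsto\Theta x\otimes y,\qquad B\colon\ x\otimes y\longmapsto\omega^{\deg x}\,x\otimes\Theta' y.\]
One checks $BA=\omega AB$, so the noncommutative binomial theorem ($BA=qAB$ implies $(A+B)^{n}=\sum_{k}\binom{n}{k}_{q}A^{k}B^{n-k}$) together with the vanishing $\binom{d}{k}_{\omega}=0$ for $0<k<d$ (valid since $\omega$ is a primitive $d$th root of unity) gives $(A+B)^{d}=A^{d}+B^{d}=(g+h)\cdot 1_{N\otimes N'}$. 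Hence $(N\otimes_{S}N',\,A+B)$ represents a $d$-fold matrix factorization of $g+h$ whose matrices, in a grading-adapted basis, consist up to powers of $\omega$ of the entries of $\Theta$ and of $\Theta'$; so its entry ideal equals $I(X)+I(Y)$ for the two inputs $X,Y$.

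Iterating this operation over the $m+2n$ single products from the first paragraph produces $X\in\MatFac{S}{d}{f}$ with $I(X)=\sum_{k}(p_{k,1},\dots,p_{k,d})=I$, which, as noted, is reduced. (For $d=2$ the construction above is the usual tensor product of matrix factorizations and recovers the surjection $X\mapsto I(X)$ of \cite{buchweitz_cohen-macaulay_1987}.)

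I expect the main obstacle to be the second paragraph: pinning down the sign/root-of-unity conventions so that $A$ and $B$ genuinely $\omega$-commute, using the noncommutative binomial identity to see that every mixed term $\binom{d}{k}_{\omega}A^{k}B^{d-k}$ with $0<k<d$ vanishes, and verifying carefully that $(N\otimes_{S}N',A+B)$ lands in $\MatFac{S}{d}{g+h}$ with entry ideal exactly $I(X)+I(Y)$. A minor bookkeeping point is that partial sums in the decomposition of $f$ may vanish, so one should run the induction in the slightly larger category of $\Z/d$-graded free $S$-modules equipped with a degree-one endomorphism whose $d$th power is a scalar; the final object, having value $f\neq0$, is then an honest matrix factorization, its graded pieces automatically of equal rank.
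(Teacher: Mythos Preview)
The paper does not give its own proof of this statement; it is quoted verbatim from \cite{herzog_linear_1991} and used only as a black box to obtain Corollary~\ref{thm:finite_red_implies_c_d(f)_finite}. There is therefore nothing in the paper to compare your argument against.

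That said, your proposal is a correct and self-contained proof. The twisted tensor product you build is the $d$-factor analogue of the classical tensor product of $2$-fold matrix factorizations: the relation $BA=\omega AB$ is immediate from the definitions, and since $\omega$ is a primitive $d$th root of unity with $\charac\mathbf{k}\nmid d$, the Gaussian binomial coefficients $\binom{d}{k}_{\omega}$ vanish for $0<k<d$, giving $(A+B)^d=A^d+B^d=(g+h)\cdot 1$. In a basis adapted to the \emph{bigrading} on $N\otimes_S N'$, the $A$-part and the $B$-part of $(A+B)$ land in distinct direct summands (one raises the $N$-degree, the other the $N'$-degree), so no cancellation occurs and the entry ideal is exactly $I(\Theta)+I(\Theta')$; this is the point you flagged and it goes through as stated. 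Your handling of the boundary issues---padding the decomposition with the cancelling pairs $a_j^{\,d}$ and $-a_j^{\,d}$ so that the entries genuinely generate $I$, and allowing intermediate values $g$ to be zero while noting that equal graded ranks are preserved under the tensor construction---is also sound. One cosmetic remark: when you absorb a coefficient $c_k\in S$ into the first factor, it is worth saying explicitly that $c_ka_{j_1}\in I$ simply because $a_{j_1}\in I$, so each $p_{k,l}$ remains in $I$ and the containment $I(X)\subseteq I$ is clear.
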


\begin{cor}\label{thm:finite_red_implies_c_d(f)_finite}
Suppose $f$ has finite reduced $d$-MF type. Then $c_d(f)$ is a finite collection of ideals of $S$. \qed
\end{cor}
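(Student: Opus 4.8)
The plan is to combine the two results just stated: the theorem of Herzog--Ulrich--Backelin, which says the map $X \mapsto I(X)$ from (isomorphism classes of) reduced $d$-fold matrix factorizations of $f$ onto $c_d(f)$ is surjective, together with the hypothesis that $f$ has finite reduced $d$-MF type. First I would observe that, by hypothesis, there is a finite list $X_1, \dots, X_m$ of representatives of the isomorphism classes of indecomposable reduced matrix factorizations of $f$ with $d$ factors. The key point to establish is that every reduced $X \in \MatFac{S}{d}{f}$ decomposes, by the Krull--Remak--Schmidt theorem, into a direct sum of indecomposable \emph{reduced} matrix factorizations; this requires knowing that a direct summand of a reduced matrix factorization is again reduced, which is immediate from the definition since the entries of a block-diagonal summand of a matrix with entries in $\mf n$ again lie in $\mf n$. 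Hence each reduced $X$ is isomorphic to $\bigoplus_{i=1}^m X_i^{a_i}$ for some non-negative integers $a_i$.

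Next I would show that $I(X)$ depends only on the isomorphism class of $X$, so that it makes sense to speak of the value of $I$ on an isomorphism class, and that $I$ respects direct sums in the sense that $I(X \oplus Y) = I(X) + I(Y)$; both facts follow directly from the definition $I(X) = \sum_{k \in \Z_d} I(\phi_k)$ once one notes that an isomorphism of matrix factorizations is given by invertible $S$-linear changes of basis on each $F_k$, which do not change the ideal generated by the entries of $\phi_k$. Combining these, for $X \cong \bigoplus_{i=1}^m X_i^{a_i}$ we get $I(X) = \sum_{i : a_i > 0} I(X_i)$, so $I(X)$ is a sum of a subset of the finite collection $\{I(X_1), \dots, I(X_m)\}$. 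Therefore the image of the map $X \mapsto I(X)$, taken over all reduced $X$, is contained in the finite set $\bigl\{ \sum_{i \in T} I(X_i) : T \subseteq \{1, \dots, m\} \bigr\}$, which has at most $2^m$ elements.

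Finally, by the Herzog--Ulrich--Backelin theorem every ideal $I \in c_d(f)$ arises as $I(X)$ for some reduced $X \in \MatFac{S}{d}{f}$, so $c_d(f)$ is contained in this finite image and is therefore finite. I do not anticipate a serious obstacle here: the only points needing care are the (routine) invariance of $I(X)$ under isomorphism and its additivity on direct sums, together with the (equally routine) observation that summands of reduced factorizations are reduced. The surjectivity input is quoted wholesale from the stated theorem, and KRS in $\MatFac{S}{d}{f}$ is available since $S$ is complete.
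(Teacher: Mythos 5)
Your proof is correct and follows exactly the argument the paper intends: it leaves the corollary without an explicit proof, relying on the same reasoning it sketches for the $d=2$ case just beforehand (surjectivity of $X\mapsto I(X)$ from Herzog--Ulrich--Backelin, plus finiteness of the indecomposable reduced factorizations). Your added care about invariance of $I(X)$ under isomorphism, additivity on direct sums, and reducedness of summands fills in the routine details the paper takes for granted.
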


Corollary \ref{thm:finite_red_implies_c_d(f)_finite} extends one direction of Theorem \ref{thm:BGS_Knorrer}; however, the converse does not hold for $d > 2$ as shown by the next example.

\begin{example}\label{example:D_infty}
Let $S=\mathbf{k}\llbracket x, y\rrbracket$ with $\charac \mathbf{k} \neq 2$ and $f=x^2y \in S$. Then the one-dimensional $D_\infty$ singularity $R=S/(f)$ has countably infinite CM type by \cite[Proposition 4.2]{buchweitz_cohen-macaulay_1987}. For each $k\ge 1$, we have a reduced matrix factorization of $x^2y$ with $3$ factors:
\[X_k = \left(\begin{pmatrix}
x &y^{k}\\
0 &-x
\end{pmatrix}, \begin{pmatrix}
y &0\\
0 &y
\end{pmatrix}, \begin{pmatrix}
x &y^k\\
0 &-x
\end{pmatrix}\right) \in \MatFac{S}{3}{x^2y}.\] Any isomorphism $X_k \to X_j$ for $k,j\ge 1$ induces an isomorphism  of $R$-modules $\cok\begin{pmatrix}
x &y^{k}\\
0 &-x
\end{pmatrix} \to \cok\begin{pmatrix}
x &y^{j}\\
0 &-x
\end{pmatrix}$. Such an isomorphism is only possible if $k=j$, that is, $X_k \cong X_j$ if and only if $k=j$. Since $X_k$ is reduced and the MCM $R$-module $\cok\begin{pmatrix}
x &y^{k}\\
0 &-x
\end{pmatrix}$ is indecomposable, \cite[Lemma 6.6]{tribone_matrix_2021} implies that $X_k$ is indecomposable. Thus, $x^2y$ has infinite reduced $3$-MF type.

On the other hand, it's not hard to see that $c_3(x^2y)$ contains only the maximal ideal. So $c_3(x^2y)$ is a finite set but $x^2y$ has infinite reduced $3$-MF type.
\end{example}

Let $N$ be an MCM $R^\sharp$-module and let $\mu_{R^\sharp}(N)$ denote the minimal number of generators of $N$. Recall that $N$ is finitely generated and free over $S$. We will see below that there is an inequality \begin{equation}\label{equation:mu_rank}
    \mu_{R^\sharp}(N) \leq \rank_S(N).
\end{equation} In the following, we consider MCM $R^\sharp$-modules $N$ where the equality $\mu_{R^\sharp}(N) = \rank_S(N)$ is attained.

As we saw in Example \ref{example:E_6}, a matrix factorization of the form $N^\flat$, obtained by computing multiplication by $z$ on an MCM $R^\sharp$-module $N$, can be non-reduced. We will show below that the matrix representing multiplication by $z$ on $N$ contains unit entries precisely when $\mu_{R^\sharp}(N) < \rank_S(N)$. In other words, the restriction of the functor $(-)^\flat: \MCM(R^\sharp) \to \MatFac{S}{d}{f}$ to the subcategory of MCM $R^\sharp$-modules satisfying $\mu_{R^\sharp}(N) = \rank_S(N)$ produces only reduced matrix factorizations of $f$ with $d$ factors. Conversely, the image of the functor $(-)^\sharp: \MatFac{S}{d}{f} \to \MCM(R^\sharp)$, restricted to the subcategory of reduced matrix factorizations of $f$, consists exactly of the MCM $R^\sharp$-modules $N$ satisfying $\mu_{R^\sharp}(N) = \rank_S(N)$.

\begin{lem}\label{thm:rank_S(N)}
Let $N$ be an MCM $R^\sharp$-module and assume that $f+z^d$ is irreducible. Then $N$ is a finitely generated free $S$-module satisfying
\[\mu_{R^\sharp}(N) \leq \rank_S(N) = d\cdot \rank_{R^\sharp}(N) = \rank_{S}(R^\sharp)\cdot \rank_{R^\sharp}(N).\]
\end{lem}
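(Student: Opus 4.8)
The plan is to establish the three-part statement by first recalling why $N$ is free over $S$, then identifying the rank equalities via localization, and finally proving the inequality $\mu_{R^\sharp}(N) \le \rank_S(N)$ by a Nakayama-type argument. The first two parts are essentially bookkeeping once one knows the local structure of $R^\sharp$ over $S$; the inequality is where the irreducibility hypothesis and the generation count actually enter.

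First I would recall, as already noted in the excerpt following the definition of $R^\sharp$, that since $R^\sharp$ is finitely generated and free of rank $d$ as an $S$-module (with basis $1,z,\dots,z^{d-1}$), an $R^\sharp$-module $N$ is MCM over $R^\sharp$ if and only if it is free as an $S$-module (by \cite[Proposition 1.9]{yoshino_cohen-macaulay_1990}); this is where "$N$ is a finitely generated free $S$-module" comes from. For the rank equalities: because $f+z^d$ is irreducible, $R^\sharp$ is a domain, so $\rank_{R^\sharp}(N)$ is well-defined via the fraction field $Q(R^\sharp)$. Let $L = Q(S)$. Tensoring the short exact-type data up to $L$, one has $R^\sharp \otimes_S L = S\llbracket z\rrbracket/(f+z^d) \otimes_S L$, which is a field extension of $L$ of degree $d$ (here one uses that $f+z^d$ remains irreducible over $L$, or at least that the quotient is a domain finite of rank $d$ over $L$; irreducibility of $f+z^d$ guarantees this). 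Hence $Q(R^\sharp)$ is a degree-$d$ extension of $L$, and counting $L$-dimensions of $N \otimes_S L = N \otimes_{R^\sharp} Q(R^\sharp)$ in two ways gives $\rank_S(N) = d \cdot \rank_{R^\sharp}(N)$. The equality $d = \rank_S(R^\sharp)$ was already observed, giving the last equality $\rank_S(N) = \rank_S(R^\sharp)\cdot \rank_{R^\sharp}(N)$.

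For the inequality $\mu_{R^\sharp}(N) \le \rank_S(N)$: let $\mf m^\sharp$ denote the maximal ideal of $R^\sharp$ and $\mf n$ that of $S$; note $\mf n R^\sharp \subseteq \mf m^\sharp$. By Nakayama over $R^\sharp$, $\mu_{R^\sharp}(N) = \dim_{\mathbf k}(N/\mf m^\sharp N)$, which is at most $\dim_{\mathbf k}(N/\mf n N)$ since $\mf n R^\sharp \subseteq \mf m^\sharp$ forces $\mf n N \subseteq \mf m^\sharp N$. But $N$ is free over $S$, so $\dim_{\mathbf k}(N/\mf n N) = \rank_S(N)$. Chaining these gives the desired inequality.

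I do not expect a serious obstacle here; the only point requiring a little care is making sure that irreducibility of $f+z^d$ in $S\llbracket z\rrbracket$ really does give that $R^\sharp\otimes_S L$ is a domain (equivalently a field) of dimension exactly $d$ over $L$ — one should note that $f + z^d$ is already monic in $z$ of degree $d$, so $R^\sharp$ is $S$-free of rank $d$ unconditionally, and then irreducibility is exactly what is needed to conclude the generic fiber is a field rather than a nontrivial product. With that in hand, the rank count and the Nakayama inequality are routine.
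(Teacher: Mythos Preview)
Your proposal is correct and takes a genuinely different route from the paper's proof. For the rank equality, the paper argues via matrix factorizations: it takes a minimal $2$-fold factorization $(\Phi,\Psi)$ of $f+z^d$ presenting $N$, uses Eisenbud's result that $\det\Phi = u(f+z^d)^{\rank_{R^\sharp}(N)}$, reduces modulo $z$, and then compares with the presentation of $N/zN$ coming from the multiplication-by-$z$ map $\phi$ on $S^r$ (where $r=\rank_S N$); since $\phi^d = -f\cdot I_r$, taking determinants yields $f^r = f^{d\cdot\rank_{R^\sharp}(N)}$ up to units. Your localization argument is more direct and conceptual: once you observe that $R^\sharp\otimes_S Q(S)$ is a domain finite of dimension $d$ over $Q(S)$, hence equals $Q(R^\sharp)$, the rank identity is just a dimension count. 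Likewise, your Nakayama argument for the inequality $\mu_{R^\sharp}(N)\le\rank_S(N)$ (via $\mf n N\subseteq \mf m^\sharp N$) is cleaner than the paper's comparison of two presentations of $N/zN$. The one thing the paper's approach buys is that it sets up the commutative diagram comparing $\bar\Phi\oplus I_{r-n}$ with $\phi$, which is then reused in the proof of the next lemma (on when $N^\flat$ is reduced); your argument proves the present lemma more efficiently but does not leave that scaffolding in place.
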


\begin{proof}
Let $(\Phi : S\llbracket z \rrbracket^n \to S\llbracket z \rrbracket^n,\Psi: S\llbracket z \rrbracket^n \to S\llbracket z \rrbracket^n) \in \MatFac{S\llbracket z \rrbracket}{2}{f+z^d}$ be a matrix factorization of $f + z^d$ such that $\Phi$ is minimal and $\cok\Phi = N$. Since $\Phi$ is minimal, $n = \mu_{R^\sharp}(N)$. Then $\det \Phi = u(f+z^d)^k$ for some $1\leq k \leq n$ and some unit $u \in S\llbracket z \rrbracket$. Recall that $k = \rank_{R^\sharp}(N)$ by \cite[Propoistion 5.6]{eisenbud_homological_1980}. By tensoring with $S=S\llbracket z \rrbracket/(z)$, we find that $\det \bar \Phi = v \cdot f^k$, where $\bar \Phi = \Phi\otimes_{S\llbracket z \rrbracket}  1_S$ and $v \in S$ is a unit. Moreover, $\bar \Phi$ is injective since $\bar\Phi \bar\Psi = f\cdot 1_{S^n} = \bar\Psi \bar\Phi$, and we have a minimal presentation of $N/zN$ over $S$:
\[\begin{tikzcd}
0 \rar &S^n \rar{\bar \Phi} &S^n \rar &N/zN \rar &0.
\end{tikzcd}\]

On the other hand, since $N$ is MCM over $R^\sharp$, it is finitely generated and free as an $S$-module. Let $r = \rank_S(N)$ and consider the map $\phi:S^r \to S^r$ representing multiplication by $z$ on $N$. This map also gives a presentation of $N/zN$ over $S$, though the presentation may not be minimal (see Example \ref{example:E_6}). Thus, there exists a commutative diagram with vertical isomorphisms
\begin{equation}\label{equation:presentation_of_N/zN}
\begin{tikzcd}[ampersand replacement=\&,column sep = large]
0 \rar \& S^r \dar \rar{\begin{pmatrix}
\bar \Phi &0\\ 0 &I_{r-n}
\end{pmatrix}} \&S^r \dar \rar \&N/zN \dar[equals] \rar \&0\\
0 \rar \& S^r \rar{\phi} \&S^r \rar \&N/zN \rar \&0.
\end{tikzcd}\end{equation}
This implies that $\mu_R(N/zN) \leq r$, where $R = S/(f)$ as usual. The desired inequality now follows from the fact that $\mu_R(N/zN) = \mu_{R^\sharp}(N)$. Furthermore, the diagram implies that $\det \phi = v' \cdot f^k$ for some unit $v'$. However, since $\phi^d = -f\cdot I_r$, we have that, up to units, 
\(f^r = (\det\phi)^d = f^{kd}.\)
Thus,
\(\rank_S(N) = r = dk = d\cdot \rank_{R^\sharp}(N).\)
\end{proof}

\begin{lem}\label{thm:reduced_MFs_vs_rank_S}
Assume $f+z^d$ is irreducible. Let $N$ be an MCM $R^\sharp$-module and let $X \in \MatFac{S}{d}{f}$. Then $\mu_{R^\sharp}(N) = \rank_S(N)$ if and only if $N^\flat \in \MatFac{S}{d}{f}$ is reduced, and $X^\sharp$ satisfies $\mu_{R^\sharp}(X^\sharp)=\rank_S(X^\sharp)$ if and only if $X$ is reduced.
\end{lem}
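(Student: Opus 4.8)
The plan is to reduce both equivalences to a single statement about the $S$-linear endomorphism representing multiplication by $z$. By Definition~\ref{def:flat_sharp} we have $N^\flat = (\mu\phi,\mu\phi,\dots,\mu\phi)$, where $\phi\colon N\to N$ is multiplication by $z$; since $\mu$ is a unit of $S$, the factorization $N^\flat$ is reduced precisely when every entry of $\phi$ lies in $\mf n$, i.e. when $\Ima\phi\subseteq\mf n N$. For the second equivalence I would recall from the proof of Proposition~\ref{thm:flat_sharp_and_sharp_flat} that $(X^\sharp)^\flat = X^{\sharp\flat} = (\phi,\phi,\dots,\phi)$, where now $\phi$ is the block matrix whose only nonzero blocks are $\phi_1,\dots,\phi_d$. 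Its entries are exactly the entries of the $\phi_i$ together with some zeros, so $(X^\sharp)^\flat$ is reduced if and only if $X$ is reduced. Consequently the assertion ``$\mu_{R^\sharp}(X^\sharp)=\rank_S(X^\sharp)$ iff $X$ is reduced'' is exactly the assertion ``$\mu_{R^\sharp}(N)=\rank_S(N)$ iff $N^\flat$ is reduced'' applied to $N=X^\sharp$, so it suffices to prove the latter.

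To prove it, fix $N\in\MCM(R^\sharp)$, put $r=\rank_S(N)$, and let $\phi\colon S^r\to S^r$ represent multiplication by $z$. The two facts I would use are: first, $zN=\Ima\phi$, so $\cok\phi = N/zN$ and hence $S^r\xrightarrow{\phi} S^r\to N/zN\to 0$ is an $S$-free presentation of $N/zN$; second, the maximal ideal of $R^\sharp$ is $(\mf n,z)R^\sharp$, so $N/\mf m_{R^\sharp}N=(N/zN)/\mf n(N/zN)$ and therefore $\mu_{R^\sharp}(N)=\mu_S(N/zN)$. If $N^\flat$ is reduced, i.e. $\Ima\phi\subseteq\mf n S^r$, then the displayed presentation is minimal, so $\mu_{R^\sharp}(N)=\mu_S(N/zN)=r=\rank_S(N)$. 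Conversely, if $\mu_{R^\sharp}(N)=r$, then the $r$ generators of $N/zN$ coming from the presentation form a minimal generating set, forcing the kernel of the surjection $S^r\twoheadrightarrow N/zN$ to lie in $\mf n S^r$; since that kernel is $\Ima\phi$, we get $\Ima\phi\subseteq\mf n S^r$ and $N^\flat$ is reduced. (Alternatively, one can read this directly off the commutative diagram~\eqref{equation:presentation_of_N/zN} in the proof of Lemma~\ref{thm:rank_S(N)}: there $\phi$ is shown to be equivalent, via invertible changes of basis over $S$, to $\left(\begin{smallmatrix}\bar\Phi & 0\\ 0 & I_{r-n}\end{smallmatrix}\right)$ with $\bar\Phi$ minimal of size $n=\mu_{R^\sharp}(N)$; when $n=r$ there is no identity block, so $\phi$ is itself minimal, and when $n<r$, reducing mod $\mf n$ exhibits a unit entry of $\phi$.)

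I do not expect a genuine obstacle here: the entire content is a careful dictionary between minimality of the matrix of multiplication by $z$ and the comparison of $\mu_{R^\sharp}$ with the $S$-rank, and this dictionary is already essentially contained in Lemma~\ref{thm:rank_S(N)} and its proof. The only points that need a moment's care are verifying that the choice of $\mu$ is irrelevant (it is, since scaling a matrix by a unit does not affect whether its entries lie in $\mf n$, consistent with Remark~\ref{remark:mu}) and noting that the hypothesis that $f+z^d$ is irreducible is invoked only through Lemma~\ref{thm:rank_S(N)}, which already guarantees that $N$ is $S$-free with $\mu_{R^\sharp}(N)\le\rank_S(N)$.
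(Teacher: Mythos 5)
Your argument is correct and essentially identical to the paper's: both reduce everything to the observation that the presentation $S^r\xrightarrow{\phi}S^r\to N/zN\to 0$ is minimal precisely when $\mu_{R^\sharp}(N)=\mu_S(N/zN)=r=\rank_S(N)$, and both deduce the second equivalence from the first by applying it to $N=X^\sharp$ and noting that $X^{\sharp\flat}$ is reduced exactly when $X$ is. The only cosmetic difference is that you spell out the Nakayama argument directly where the paper just points to the diagram \eqref{equation:presentation_of_N/zN}.
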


\begin{proof}
Let $N \in \MCM(R^\sharp)$ and set $r = \rank_S(N)$. Let $\phi:S^r \to S^r$ be the $S$-linear map representing multiplication by $z$ on $N$. Then the presentation of $N/zN$ given by $\phi$ in \eqref{equation:presentation_of_N/zN} is minimal if and only if $r=\rank_S(N) = \mu_R(N/zN)$, where $R=S/(f)$. Since $\mu_R(N/zN) = \mu_{R^\sharp}(N)$, we have that $\phi$ is minimal if and only if $\rank_S(N) = \mu_{R^\sharp}(N)$. This proves the first statement since $N^\flat = (\mu\phi,\mu\phi,\cdots,\mu\phi) \in \MatFac{S}{d}{f}$.

By Proposition \ref{thm:flat_sharp_and_sharp_flat}, $X^{\sharp\flat} \cong \bigoplus_{k \in \Z_d}T^kX$ which is reduced if and only if $X$ is reduced. The second statement now follows from the first by taking $N = X^\sharp \in \MCM(R^\sharp)$.
\end{proof}

Lemma \ref{thm:reduced_MFs_vs_rank_S} gives us a specialization of Corollary \ref{thm:cor_knorrer} and Theorem \ref{thm:finite_dMF_type_2}.

\begin{prop} \label{thm:finite_reduced_type} Assume $f + z^d$ is irreducible.
\begin{enumerate}[label = (\roman*)]
    \item \label{thm:finite_reduced_type_1} For any reduced $X \in \MatFac{S}{d}{f}$, there exists $N \in \MCM(R^\sharp)$ satisfying $\rank_S(N) = \mu_{R^\sharp}(N)$ such that $X$ is isomorphic to a direct summand of $N^\flat$.
    \item \label{thm:finite_reduced_type_2} For any $N \in \MCM(R^\sharp)$ satisfying $\rank_S(N) = \mu_{R^\sharp}(N)$, there exists reduced $X \in \MatFac{S}{d}{f}$ such that $N$ is isomorphic to a direct summand of $X^\sharp$.
\end{enumerate}

In particular, $f$ has finite reduced $d$-MF type if and only if there are, up to isomorphism, only finitely many indecomposable MCM $R^\sharp$-modules $N$ satisfying $\rank_S(N) = \mu_{R^\sharp}(N)$.
\end{prop}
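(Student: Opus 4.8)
The plan is to prove the three statements of Proposition \ref{thm:finite_reduced_type} in sequence, obtaining the final "in particular" claim as a formal consequence of the first two, exactly in the way that Theorem \ref{thm:finite_dMF_type_2} is deduced from Corollary \ref{thm:cor_knorrer} and Proposition \ref{thm:flat_sharp_and_sharp_flat}. For part \ref{thm:finite_reduced_type_1}, I would start with a reduced $X \in \MatFac{S}{d}{f}$. By Corollary \ref{thm:cor_knorrer}(i) there is $N \in \MCM(R^\sharp)$ with $X$ a direct summand of $N^\flat$; but I need such an $N$ with the extra property $\rank_S(N) = \mu_{R^\sharp}(N)$. The natural candidate is $N \coloneqq X^\sharp$. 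Indeed, by Proposition \ref{thm:flat_sharp_and_sharp_flat}, $X^{\sharp\flat} \cong \bigoplus_{k \in \Z_d} T^k X$, and $X$ occurs as a summand (the $k=0$ term); moreover since $X$ is reduced, every $T^k X$ is reduced, so $X^{\sharp\flat}$ is reduced. Now Lemma \ref{thm:reduced_MFs_vs_rank_S} says precisely that $N^\flat$ reduced is equivalent to $\mu_{R^\sharp}(N) = \rank_S(N)$, so $N = X^\sharp$ does satisfy the rank/generator equality, and $X$ is a summand of $N^\flat = X^{\sharp\flat}$. That completes \ref{thm:finite_reduced_type_1}.

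For part \ref{thm:finite_reduced_type_2}, I would begin with $N \in \MCM(R^\sharp)$ satisfying $\rank_S(N) = \mu_{R^\sharp}(N)$. By Corollary \ref{thm:cor_knorrer}(ii) there is $X \in \MatFac{S}{d}{f}$ with $N$ a summand of $X^\sharp$; the natural choice is $X \coloneqq N^\flat$, since by Proposition \ref{thm:flat_sharp_and_sharp_flat} we have $N^{\flat\sharp} \cong \bigoplus_{k \in \Z_d} (\sigma^k)^* N$, which contains $N$ (the $k=0$ term) as a summand. It remains to check $X = N^\flat$ is reduced; but Lemma \ref{thm:reduced_MFs_vs_rank_S} gives exactly this from the hypothesis $\mu_{R^\sharp}(N) = \rank_S(N)$. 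So $N$ is a summand of $X^\sharp$ with $X$ reduced, proving \ref{thm:finite_reduced_type_2}.

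Finally, the "in particular" statement follows by the same counting argument used in the proof of Theorem \ref{thm:finite_dMF_type_2}, now carried out inside the two restricted classes. Suppose $f$ has finite reduced $d$-MF type, with $X_1, \dots, X_t$ a complete list of indecomposable reduced factorizations up to isomorphism. Given any indecomposable $N \in \MCM(R^\sharp)$ with $\rank_S(N) = \mu_{R^\sharp}(N)$, part \ref{thm:finite_reduced_type_2} writes $N$ as a summand of $X^\sharp$ for some reduced $X$; decomposing $X$ into indecomposables (which are still reduced, as summands of a reduced factorization) and using KRS in $\MCM(R^\sharp)$, $N$ is a summand of some $X_i^\sharp$, so $N$ lies in the finite list of indecomposable summands of $X_1^\sharp, \dots, X_t^\sharp$. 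Conversely, if there are only finitely many such $N$, say $N_1, \dots, N_s$, then part \ref{thm:finite_reduced_type_1} together with KRS in $\MatFac{S}{d}{f}$ shows every indecomposable reduced $X$ is a summand of some $N_j^\flat$, hence lies in the finite list of indecomposable summands of $N_1^\flat, \dots, N_s^\flat$. I do not anticipate a serious obstacle here: the one point requiring slight care is that "indecomposable summand of a reduced factorization is reduced" and "indecomposable summand of a module with $\mu = \rank_S$ again has $\mu = \rank_S$" — the first is immediate since a direct-sum decomposition block-diagonalizes the matrices, and the second follows because both $\mu_{R^\sharp}$ and $\rank_S$ are additive on direct sums while the inequality \eqref{equation:mu_rank} holds for each summand, forcing equality on each. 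Everything else is a direct invocation of Lemma \ref{thm:reduced_MFs_vs_rank_S}, Proposition \ref{thm:flat_sharp_and_sharp_flat}, and Corollary \ref{thm:cor_knorrer}.
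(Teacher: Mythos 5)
Your proposal is correct and follows essentially the same route as the paper: parts (i) and (ii) are obtained by taking $N = X^\sharp$ and $X = N^\flat$ respectively and combining Lemma \ref{thm:reduced_MFs_vs_rank_S} with Proposition \ref{thm:flat_sharp_and_sharp_flat}, and the ``in particular'' statement is the KRS counting argument from Theorem \ref{thm:finite_dMF_type_2} together with exactly the two observations the paper records (reducedness passes to summands, and $\mu_{R^\sharp} = \rank_S$ passes to summands by additivity and the inequality \eqref{equation:mu_rank}). No gaps.
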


\begin{proof}
Both \ref{thm:finite_reduced_type_1} and \ref{thm:finite_reduced_type_2} follow from Lemma \ref{thm:reduced_MFs_vs_rank_S} and Proposition \ref{thm:flat_sharp_and_sharp_flat}. The final statement follows as in the proof of Theorem \ref{thm:finite_dMF_type_2} by noticing that a matrix factorization $Y \in \MatFac{S}{d}{f}$ is reduced if and only if every summand of $Y$ is reduced and that an MCM $R^\sharp$-module $N$ satisfies $\mu_{R^\sharp}(N) = \rank_S(N)$ if and only if every summand of $N$ satisfies the same equality.
\end{proof}

For a module $M$ over a local ring $A$, we let $e(M)$ denote the multiplicity of $M$. If $M$ is an MCM $A$-module, there is a well known inequality $\mu_A(M) \leq e(M)$. The class of MCM modules satisfying $\mu_A(M) = e(M)$ are called \textit{Ulrich modules}. For background on Ulrich modules we refer the reader to 
\cite{beauville_introduction_2018}, \cite{brennan_maximally_1987}, \cite{herzog_maximal_1987}, and \cite{herzog_linear_1991}. If $A$ is a domain, then we may compute the multiplicity of $M$ as $e(M) = e(A)\cdot \rank_A(M)$ (see \cite[Proposistion 1.6]{yoshino_cohen-macaulay_1990}).

In the case of the $d$-fold branched cover of $R$, we have the following connection between reduced $d$-fold matrix factorizations of $f$ and Ulrich modules over $R^\sharp$.

\begin{cor}\label{thm:Ulrich}
Assume $d \leq \textup{ord}(f)$ and that $f+z^d$ is irreducible. Let $N \in \MCM(R^\sharp)$. Then $N$ is an Ulrich $R^\sharp$-module if and only if $N^\flat \in \MatFac{S}{d}{f}$ is a reduced matrix factorization.

In particular, $f$ has finite reduced $d$-MF type if and only if there are, up to isomorphism, only finitely many indecomposable Ulrich $R^\sharp$-modules.
\end{cor}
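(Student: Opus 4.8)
\emph{Proof proposal.}
The plan is to combine the rank computation in Lemma~\ref{thm:rank_S(N)} with the reducedness criterion in Lemma~\ref{thm:reduced_MFs_vs_rank_S}, after identifying $\rank_S(N)$ with the multiplicity $e(N)$. First I would note that, since $f+z^d$ is irreducible and $S\llbracket z\rrbracket$ is a regular (hence factorial) local ring, $R^\sharp$ is a domain; thus $\rank_{R^\sharp}$ is well defined and $e(N) = e(R^\sharp)\cdot\rank_{R^\sharp}(N)$ for every MCM $R^\sharp$-module $N$, by the formula recalled before the statement. Next I would compute $e(R^\sharp)$. As $R^\sharp$ is a hypersurface ring, $e(R^\sharp) = \ord(f+z^d)$; and since $f \in S$ does not involve the variable $z$, the initial form of $f$ cannot cancel the monomial $z^d$, so $\ord(f+z^d) = \min\{\ord(f),d\} = d$, using the hypothesis $d \le \ord(f)$ (this includes the boundary case $d = \ord(f)$, where the two initial forms live in the same degree but still cannot cancel). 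Hence $e(R^\sharp) = d$.

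Combining these, $e(N) = d\cdot\rank_{R^\sharp}(N)$, while Lemma~\ref{thm:rank_S(N)} gives $\rank_S(N) = d\cdot\rank_{R^\sharp}(N)$; therefore $e(N) = \rank_S(N)$ for every MCM $R^\sharp$-module $N$. Consequently $N$ is Ulrich, i.e.\ $\mu_{R^\sharp}(N) = e(N)$, if and only if $\mu_{R^\sharp}(N) = \rank_S(N)$, which by Lemma~\ref{thm:reduced_MFs_vs_rank_S} holds if and only if $N^\flat$ is a reduced matrix factorization of $f$. This establishes the first assertion.

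For the final sentence I would invoke Proposition~\ref{thm:finite_reduced_type}: $f$ has finite reduced $d$-MF type if and only if, up to isomorphism, there are only finitely many indecomposable MCM $R^\sharp$-modules $N$ with $\rank_S(N) = \mu_{R^\sharp}(N)$; by the equivalence just proved (and since Ulrich modules are by definition MCM), these are precisely the indecomposable Ulrich $R^\sharp$-modules. The only step requiring genuine care is the multiplicity computation $e(R^\sharp) = d$ — in particular the boundary case $d = \ord(f)$ and the (standard) identification of the multiplicity of a hypersurface ring with the order of its defining element; the rest is a direct substitution into results already established.
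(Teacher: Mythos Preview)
Your proposal is correct and follows essentially the same route as the paper's own proof: compute $e(R^\sharp)=d$ from $d\le\ord(f)$, use Lemma~\ref{thm:rank_S(N)} to identify $d\cdot\rank_{R^\sharp}(N)$ with $\rank_S(N)$, conclude that Ulrich is equivalent to $\mu_{R^\sharp}(N)=\rank_S(N)$, and then invoke Lemma~\ref{thm:reduced_MFs_vs_rank_S} and Proposition~\ref{thm:finite_reduced_type}. You simply supply more justification for the multiplicity computation than the paper does.
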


\begin{proof}
Since $d \leq \textup{ord}(f)$, the multiplicity of $R^\sharp = S\llbracket z \rrbracket/(f+z^d)$ is $d$. Hence, an MCM $R^\sharp$-module $N$ is Ulrich if and only if $\mu_{R^\sharp}(N) = d \cdot \rank_{R^\sharp}(N)$. By Lemma \ref{thm:rank_S(N)}, the quantity $d\cdot \rank_{R^\sharp}(N)$ is equal to the rank of $N$ as a free $S$-module. Thus, $N$ is Ulrich if and only if $\mu_{R^\sharp}(N) = \rank_S(N)$. Both statements now follow from Proposition \ref{thm:finite_reduced_type}.
\end{proof}

\begin{remark} Let $d=2$ so that $R^\sharp = S\llbracket z \rrbracket/(f+z^2)$ is the double branched cover. The condition $\rank_S(N) = \mu_{R^\sharp}(N)$ is redundant in this case. An MCM $R^\sharp$-module $N$ satisfies $\rank_S(N) = \mu_{R^\sharp}(N)$ if and only if $N$ has no summands isomorphic to $R^\sharp$ (this follows from the proof of \cite[Lemma 8.17 \textit{(iii)}]{leuschke_cohen-macaulay_2012}). In other words, the conclusion of Proposition \ref{thm:finite_reduced_type} is simply a restatement of Kn\"orrer's Theorem (Theorem \ref{thm:knorrer_4_cats} above) when $d=2$. Furthermore, Corollary \ref{thm:Ulrich} implies that any MCM $R^\sharp$-module with no free summands is an Ulrich module. This is a known result of Herzog-K\"uhl \cite[Corollary 1.4]{herzog_maximal_1987} since the multiplicity of $R^\sharp$ is two.
\end{remark}

\begin{example}\label{example:x^a_y^a+i}
Let $\mathbf{k}$ be an algebraically closed field of characteristic zero and consider the one-dimensional hypersurface ring \vspace{-4pt}\[R_{a,i} = \mathbf{k}\llbracket x ,y \rrbracket /(x^a + y^{a+i}), \quad a\ge 2, i \ge 0.\]
If $i=1$ or $i=2$, then, by \cite[Theorem A.3]{herzog_linear_1991}, $R_{a,i}$ has only finitely many isomorphism classes of indecomposable Ulrich modules. Set $S=\mathbf{k}\llbracket y \rrbracket$ and consider $R_{a,i}$ as the $a$-fold branched cover of $R=\mathbf{k}\llbracket y\rrbracket/(y^{a+i})$. Since $e(R_{a,i}) = a$, Corollary \ref{thm:Ulrich} implies that $y^{a+i},$ for $i \in \{1,2\},$ has only finitely many isomorphism classes of reduced indecomposable $a$-fold matrix factorizations. In other words, $y^{a+i}$ has finite reduced $a$-MF type for $i=1,2$ and any $a\ge 2$.

The methods in \cite[Theorem A.3]{herzog_linear_1991} can be used to compute the isomorphism classes of indecomposable reduced matrix factorizations of $y^{a+i}$. For instance, let $a \ge 2$ and $i=1$. Then $R_{a,1} \cong \mathbf{k}\llbracket t^a,t^{a+1}\rrbracket$ and $t^a$ is a minimal reduction of the maximal ideal $\mf m$ of $R_{a,1}$. Hence, $R_{a,1}' = R_{a,1}[\{\frac{r}{t^a}: r \in \mf m\}]  = \mathbf{k}\llbracket t \rrbracket$ is the first quadratic transform of $R_{a,1}$. By \cite[Corollary A.1]{herzog_linear_1991}, an $R_{a,1}$-module $M$ is Ulrich over $R_{a,1}$ if and only if it is MCM over $R_{a,1}'$. Since $R_{a,1}' = \mathbf{k}\llbracket t \rrbracket$ is a regular local ring, the only indecomposable MCM $R_{a,1}'$-module is $R_{a,1}'$ itself.

As an $S\cong \mathbf{k}\llbracket t^a \rrbracket$-module, $R_{a,1}' = \mathbf{k}\llbracket t \rrbracket$ is free with basis given by $\{1,t,t^2,\dots,t^{a-1}\}$. Thus, multiplication by $x=t^{a+1}$ on the basis $\{1,t,\dots,t^{a-1}\}$ is given by the mapping
\[t^k \mapsto t^{a+1+k} = t^at^{k+1}\] for $0\leq k \leq a-1$. Since $y=t^a$, it follows that multiplication by $x$ on the MCM $R_{a,1}$-module $R_{a,1}'$ is given by the $a\times a$ matrix with entries in $\mathbf{k}\llbracket y \rrbracket$
\[\phi = \begin{pmatrix}
0 &0 &\cdots &0 &y^2\\
y &0 &\cdots &0 &0\\
0 &y &\ddots &\vdots &\vdots\\
\vdots &\vdots &\ddots &\ddots& \vdots\\
0 &0 &\cdots          &y &0
\end{pmatrix}.\]

It follows that $(R_{a,1}')^\flat \cong \bigoplus_{i \in \Z_a}T^i((y^2,y,y,\dots,y)) \in \MatFac{\mathbf{k}\llbracket y \rrbracket}{a}{y^{a+1}}$. By Proposition \ref{thm:finite_reduced_type} and Corollary \ref{thm:Ulrich}, the matrix factorization $(y^2,y,y,\dots,y)$ $\in \MatFac{\mathbf{k}\llbracket y \rrbracket}{a}{y^{a+1}},$ and its corresponding shifts, are the only indecomposable reduced matrix factorizations of $y^{a+1}$ with $a$ factors.

Notice that for $a \ge 4$, the polynomial $y^{a+1}$ does not appear on the list given in Theorem \ref{thm:B} for any $d > 2$. Thus, the conclusions of this example imply that, for $a\ge 4$, the polynomial $y^{a+1}$ has infinitely many isomorphism classes of indecomposable matrix factorizations with $a$ factors but only finitely many which are reduced.
\end{example}

The last example shows the necessity of the assumption $d \leq \text{ord}(f)$ in Corollary \ref{thm:Ulrich}.

\begin{example}
Let $\mathbf k$ be algebraically closed of characteristic zero. Set $S = \mathbf{k}\llbracket x \rrbracket$, $f=x^3$, and $R=S/(f)$. The hypersurface ring $R^\sharp = \mathbf{k}\llbracket x , y \rrbracket/(x^3+y^4)$ is the same ring given in Example \ref{example:E_6}, however, here we are viewing $R^\sharp$ as the $4$-fold branched cover of $R= \mathbf{k}\llbracket x \rrbracket/(x^3)$. Again using the notation of \cite[9.13]{yoshino_cohen-macaulay_1990}, we take $B = \cok \beta$ where
\[\beta = \begin{pmatrix}
y &0 &x\\
x &-y^2 &0\\
0 &x &-y
\end{pmatrix}.\]
The MCM $R^\sharp$-module $B$ is, in this case, free of rank $4$ over $S=\mathbf{k}\llbracket x \rrbracket$. In particular, if $e_1,e_2, e_3 \in B$ are the images of the standard basis on $S\llbracket y \rrbracket^3$, then an $S$-basis for $B$ is $\{e_1,e_2,e_3,ye_2\}$. Multiplication by $y$ on $B$ is therefore given by the $S$-matrix
\[\phi = \begin{pmatrix}
0 &0 &x &0\\
-x &0 &0 &0\\
0 &0 &0 &x\\
0 &1 &0 &0
\end{pmatrix}.\]
Notice that $B$ is an Ulrich $R^\sharp$-module but multiplication by $y$ on $B$ is given by a non-reduced matrix. In other words, the condition $d \leq \text{ord}(f)$ in Proposition \ref{thm:finite_reduced_type} is needed to produce reduced matrix factorizations of $f$.
\end{example}

\bibliographystyle{amsalpha}
\bibliography{Bib-BCMF}

\vspace{.1in}
\noindent \textit{Email address: }\href{mailto:gjleusch@syr.edu}{gjleusch@syr.edu}\\
\address{Department of Mathematics, Syracuse University, Syracuse NY 13244, USA}\\
\textit{URL: }\url{http://www.leuschke.org/}\\

\noindent\textit{Email address: }\href{mailto:tim.tribone@utah.edu}{tim.tribone@utah.edu}\\
\address{Department of Mathematics, University of Utah, Salt Lake City, UT 84112, USA}\\
\textit{URL: }\url{https://timtribone.com/}\\

\end{document}